\begin{document}

\title{Affine processes beyond stochastic continuity}
    \author[M. Keller-Ressel]{Martin Keller-Ressel}
		\author[T. Schmidt]{Thorsten Schmidt}
		\author[R. Wardenga]{Robert Wardenga}
		\address{Albert-Ludwigs University of Freiburg, Ernst-Zermelo-Str. 1, 79104 Freiburg,  
		Freiburg Research Institute of Advanced Studies (FRIAS), Germany, and 
  University of Strasbourg Institute for Advanced Study (USIAS), France. 
}
    \email{Thorsten.Schmidt@stochastik.uni-freiburg.de}
		\address{Dresden University of Technology, Zellescher Weg 12-14, 01069 Dresden, Germany.}
		\email{Martin.Keller-Ressel@tu-dresden.de}
    \email{Robert.Wardenga@tu-dresden.de}
    \thanks{We thank the participants of the Freiburg-Wien-Z\"urich-seminar for stimulating discussions and very helpful comments.}
    \date{December 21, 2018}

\maketitle

\vspace{2mm}

\keywords{\noindent Keywords: affine process, semimartingale, stochastic discontinuity, measure differential equations, default risk, interest rate, option pricing, announcement effects, dividends}

\begin{abstract}
In this paper we study time-inhomogeneous affine processes beyond the common assumption of stochastic continuity. 
In this setting times of jumps can be both inaccessible and predictable. To this end we develop a general theory of  finite dimensional affine semimartingales under very weak assumptions. We show that the corresponding semimartingale characteristics have affine form and that the conditional characteristic function can be represented with solutions to measure differential equations of Riccati type. We prove existence of affine Markov processes and affine semimartingales under mild conditions and elaborate on examples and applications including affine processes in discrete time.
\end{abstract}

\section{Introduction}

The importance of jumps at predictable or predetermined times is widely acknowledged in the financial literature, see for example \cite{Merton1974,GeskeJohnson84,BelangerShreveWong2004,Piazzesi2001,Piazzesi2005,KimWright,DuffieLando2001,Fama1970,MillerRock1985}. This is due to the fact that a surprisingly large amount of jumps or, more generally, rapid changes in stock prices or other financial time series occur in correspondence with announcements released at scheduled and hence predictable times (see, e.g., \cite{Johannes2004}).  A prominent example  is the jump of the EUR/GBP exchange rate on the 23rd of June in 2016 when it became clear that the British referendum on membership in the EU will come out in favor of Brexit. In addition, large jumps in stock prices frequently coincide with the release of quarterly reports or earnings announcements. (See Figure \ref{fig:db} for an example and \cite{DupireTalk2017} for further empirical support). Econometric models incorporating such jumps at predetermined times were studied and tested on market data in \cite{Piazzesi2001}, see also \cite{Piazzesi2010} and \cite{GehmlichSchmidt2015MF, FontanaSchmidt2016}.

While affine processes are a prominent model class for interest rates or stochastic volatility, they have only been considered under the assumption of stochastic continuity, which precludes jumps at predictable times. This assumption is dropped in this paper, and we study affine processes only under very mild assumptions, which allow for jumps to occur at both predictable and totally inaccessible times. 

The defining property of affine processes is the exponential affine form of the conditional characteristic function which allows for rich structural properties while retaining tractability due to the representation of the conditional characteristic function in terms of ordinary differential equations, the so called 'generalized Riccati equations'. In subsequent research further applications have been explored (e.g. \cite{KRPT2013, Keller-Ressel2010, Duffie05}) as well as extensions of the state space (e.g. \cite{CFMT2011, CKMT2016}) and most notably an extension to time-inhomogeneous affine processes in \cite{Filipovic05}.

In Remark 2.11 of \cite{Filipovic05} the author conjectures that his results can also be obtained on the level of semimartingales omitting the assumption of stochastic continuity. Here we confirm this conjecture by generalizing the result in \cite{Filipovic05} to affine semimartingales with singular continuous and discontinuous characteristics and only locally integrable parameters. This result is complemented by existence results for affine Markov processes and affine semimartingales under certain mild assumptions. Furthermore we provide a variety of examples and applications. In particular we propose an affine term-structure framework that allows for discontinuities at previously fixed time-points.

\begin{figure}[h] \label{fig:db}
	 \begin{overpic}[width=14cm, clip]{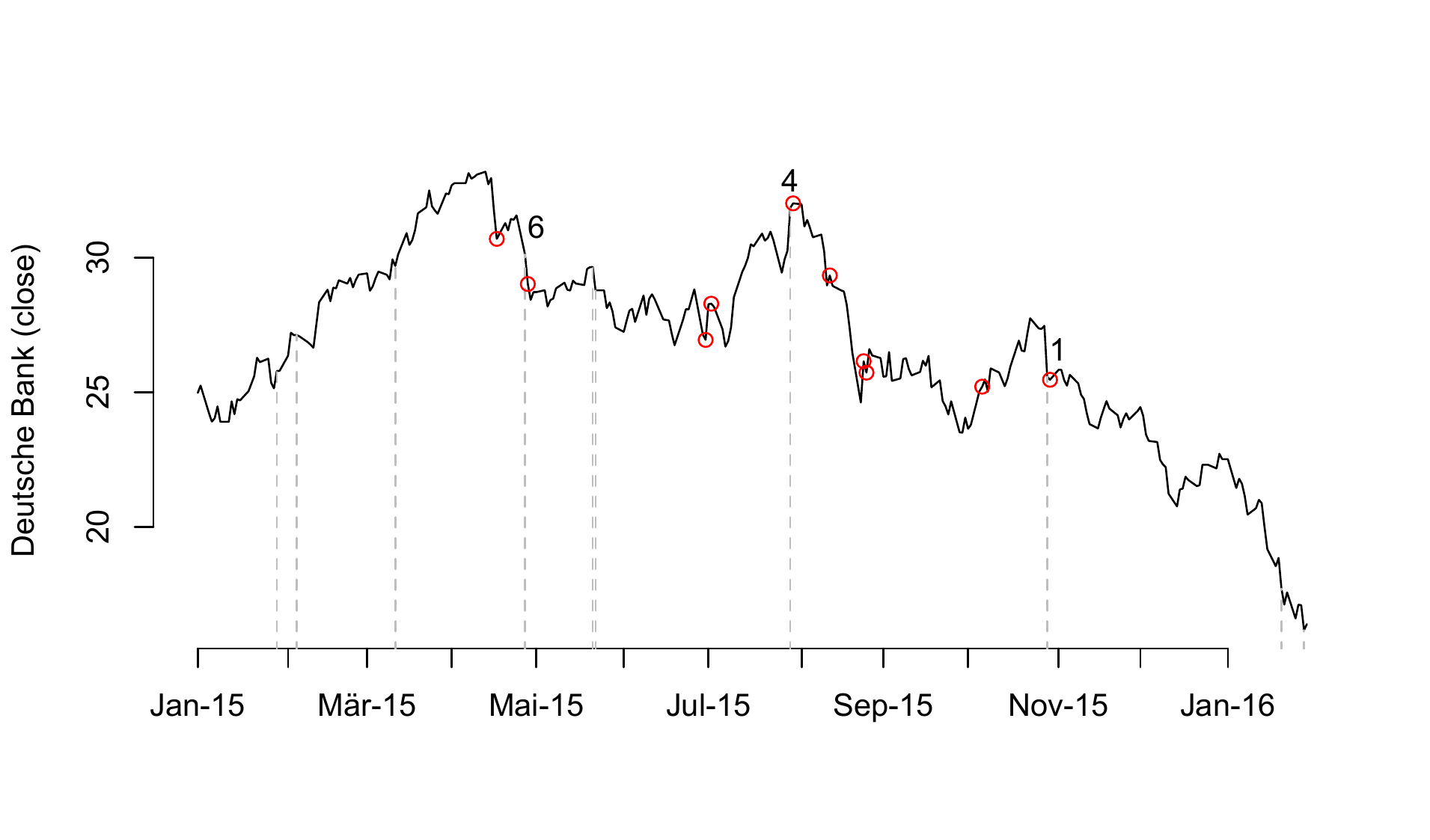} \end{overpic}  
	 \caption{Chart of the stock price of Deutsche Bank. The vertical lines represent dates which have been announced in the previous annual reports of 2013 and 2014, e.g. annual and quarterly reports and shareholder meetings. We marked the 10 largest one-day movements by circles; three (the largest, and the 4th- and 6th-largest) of them occurred at pre-announced dates.
	 }
\end{figure}

The paper at hand is structured as follows. The next section revisits some facts about semimartingales before stating the definition of \emph{affine semimartingales} and introducing certain technical assumption. After proving first results we define the concept of a \emph{good parameter set} in Section \ref{sec:characterization} which is a key ingredient of our first main result, the characterization Theorem \ref{prop1}. Section 4 discusses the relation between affine Markov processes and affine semimartingales as well as the important case of infinitely divisible processes. Section 5 is devoted to the existence of affine Markov processes and affine semimartingales under certain conditions on their good parameter set. Examples and applications are explained in Section 6 which concludes the paper with the introduction of a new \emph{affine term-structure framework}. Details about measure differential equations that appear in the characterization and existence results instead of the ODEs appearing in \cite{DuffieFilipovicSchachermayer} and \cite{Filipovic05}, are postponed to the appendix.

\section{Preliminaries}
\subsection{Affine Semimartingales}
Consider a filtered probability space $(\Omega,\cF,\bbF,P)$ with filtration $\bbF=(\cF_t)_{t \ge 0}$ satisfying the usual conditions.
A stochastic process $X$ taking values in $\R^d$ is called \emph{c\`adl\`ag}  if all its paths are right-continuous with left limits. For a c\`adl\`ag process $X$ we define $X_-$ and $\Delta X$ by 
\[\begin{cases}X_{0-} &= X_0, \quad  X_{t-} = \lim_{s \uparrow t} X_s \quad \text{ for }t>0,\\
\Delta X_t &= X_t - X_{t-}  . \end{cases}
\]
In particular, note that $\Delta X_0 = 0$ and that $X$ can be recovered from $X_-$ by taking right limits.

A \emph{semimartingale} is a process $X$ with decomposition $X=X_0+N+M$ where $X_0$ is $\cF_0$-measurable, $N$ is c\`adl\`ag, adapted, has paths of finite variation over each finite interval with $N_0=0$ and $M$ is a local martingale starting in $0$. We will always consider a c\`adl\`ag version of the semimartingale $X$. 

To the jumps of $X$ we associate an integer-valued random measure $\mu^X$ by %\todo{in $\mu^X$ umbenannt}
	\begin{align} \label{def:muX}
		\mu^X(dt,dx) = \sum_{s \ge 0} \ind{\Delta X_s \neq 0} \delta_{(s,\Delta X_s)}(dt,dx); 
	\end{align}
here $\delta_a$ is the Dirac measure at point $a$. We denote the compensator, or the dual predictable projection, of the random measure $\mu^X$ by $\nu$. This is the unique predictable random measure which renders stochastic integrals with respect to $\mu^X-\nu$ local martingales. 

We briefly recall the well-known concept of \emph{characteristics} of a semimartingale, cf. \cite[Ch.~II]{JacodShiryaev}: a semimartingale $X$ with decomposition $X=X_0+N+M$ is called \emph{special} if $N$ is predictable. In this case, the decomposition  is unique, and we call it the \emph{canonical decomposition}. The local martingale part $M$ can be decomposed in a continuous local martingale part, which we denote by $X^c$, and a purely discontinuous local martingale part, $X-X^c$. We fix a truncation function $h:\R^d\to \R^d$ which is a bounded function satisfying $h(x)=x$ in a neighborhood of $0$. Then $\check X(h)=\sum_{s \le \cdot}( \Delta X_s - h(\Delta X_s) )$ and $X(h)=X-\check X(h)$ both define $d$-dimensional stochastic processes. Note that $\Delta X(h) = h(\Delta X)$, such that $X(h)$ has bounded jumps. The resulting process is a special semimartingale and we denote its canonical decomposition by
$$ X(h) = X_0 + B(h) + M(h), $$ 
with a predictable process of finite variation $B(h)$ and a local martingale $M(h)$.  
The \emph{characteristics} of the semimartingale $X$ is the triplet $(B,C,\nu)$ where $B=B(h)$, $C=(C^{ij})$ with $C^{ij}=\scal{X^{i,c}}{X^{j,c}}$ and $\nu=\nu^X$ is the compensator of $\mu^X$ defined in Equation \eqref{def:muX}. For additional facts on semimartingales and stochastic analysis we refer to \cite{JacodShiryaev}.

Let $D \subset \R^d$ be a closed convex cone of full dimension, i.e., a convex set, closed under multiplication with positive scalars, and with linear hull equal to $\R^d$. An important example is the set $\Rplus^m \times \R^n$ with $m+n=d$, which was used as the `canonical state-space' for affine processes in \cite{DuffieFilipovicSchachermayer,Filipovic05}. For $u,w$ in $\C^d$ we set $\scal{u}{w}=\sum_{i=1}^d u_i w_i$ and denote the real part of $u$ by $\Re u$. Moreover, we define the \emph{complex dual cone} of the state space $D$ by
\begin{equation}\label{eq:defU}
\cU := \set{u \in \C^d: \scal{\Re u}{x} \le 0  \text{ for all }  x \in D}.
\end{equation}
For the canonical state space $\cU$ equals $\C_{\le 0}^m \times i\R^n$, where $\C_{\le 0} = \set{u \in \C: \Re u \le 0}$, which coincides with the definition used in \cite{DuffieFilipovicSchachermayer}.\footnote{We use this notation in analogous fashion for $<$,$>$ or $\ge$ instead of $\le$ and with $\R$ instead of $\C$.} We are now prepared to state the central definition of this paper. 

\begin{definition}\label{def:affine}
Let $X$ be a c\`adl\`ag  semimartingale, taking values in $D$. The process $X$ is called an \emph{affine semimartingale}, if there exist $\C$ and $\C^d$-valued deterministic functions $\phi_s(t,u)$ and $\psi_s(t,u)$, continuous in $u \in \cU$ and with $\phi_s(t,0) = 0$ and $\psi_s(t,0)=0$,  such that 
\begin{align}\label{cond:affine}
E\big[ e^{\scal{u}{X_{t}}} |\cF_{s} \big] = \exp\big( \phi_s(t,u) + \scal{\psi_s(t,u)}{X_s} \big)
\end{align}
for all $0 \le s \le t$ and $u \in \cU$. Moreover, $X$ is called \emph{time-homogeneous}, if $\phi_s(t,u) = \phi_0(t-s,u)$ and $\psi_s(t,u)=\psi_0(t-s,u)$, again for all $0 \le s \le t$ and  $u \in \cU$. 
\end{definition}

Note that the left-hand side of \eqref{cond:affine} is always well-defined and bounded in absolute value by $1$, due to the definition of $\cU$.

\begin{remark}
Comparing Definition~\ref{def:affine} with the definition of an \emph{affine process} in \cite{DuffieFilipovicSchachermayer} (which treats the time-homogeneous case) and \cite{Filipovic05} (which treats the time-inhomogeneous case), we have replaced the Markov assumption of \cite{DuffieFilipovicSchachermayer,Filipovic05} with a semimartingale assumption. In view of \cite[Thm.~2.12]{DuffieFilipovicSchachermayer} this seems to slightly restrict the scope of the definition, since it excludes non-conservative processes. On the other hand, and this is the central point of our paper, we do not impose a stochastic continuity assumption on $X$, as has been done in \cite{DuffieFilipovicSchachermayer,Filipovic05}. It turns out that omitting this assumption leads to a significantly larger class of stochastic processes and to a substantial extension of the results in \cite{DuffieFilipovicSchachermayer,Filipovic05}.
\end{remark}

To continue, we introduce an important condition on the support of the process $X$. Recall that the support of a generic random variable $X$, is the smallest closed set $C$ such that $P(X\in C)=1$; we denote this set by $\supp(X)$. For a set $A$ we write $\conv (A)$ for its convex hull, i.e.\ the smallest convex set containing $A$.

\begin{condition}\label{def:full_support}
We say that an affine semimartingale $X$ has \emph{support of full convex span}, if $\conv(\supp(X_t)) = D$ for all $t >0$. 
\end{condition}

Under Condition~\ref{def:full_support}, $\phi$ and $\psi$ are uniquely specified:

\begin{lemma}\label{lem:uniqueness} Let $X$ be an affine semimartingale satisfying the support condition~\ref{def:full_support}. Then $\phi_s(t,u)$ and $\psi_s(t,u)$ are uniquely specified by \eqref{cond:affine} for all $0 < s \le t$ and $u \in \cU$.
\end{lemma}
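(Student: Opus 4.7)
The plan is to show that the difference of two putative solutions must vanish identically. Suppose $(\tilde\phi,\tilde\psi)$ also satisfies \eqref{cond:affine} alongside $(\phi,\psi)$; fix $0<s\le t$ and set $a(u) := \phi_s(t,u) - \tilde\phi_s(t,u)$ and $b(u) := \psi_s(t,u) - \tilde\psi_s(t,u)$. Both are continuous in $u\in\cU$ with $a(0)=0$ and $b(0)=0$. Dividing the two instances of \eqref{cond:affine} gives $\exp\bigl(a(u)+\scal{b(u)}{X_s}\bigr)=1$ almost surely, i.e.\ $a(u)+\scal{b(u)}{X_s}\in 2\pi i\Z$ a.s. Since $2\pi i\Z$ is closed and $x\mapsto a(u)+\scal{b(u)}{x}$ is continuous, this passes to the deterministic containment $\supp(X_s)\subseteq\{x\in\R^d:\,a(u)+\scal{b(u)}{x}\in 2\pi i\Z\}$ for every $u\in\cU$.

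Next, I would exploit Condition~\ref{def:full_support}: because $\conv(\supp(X_s))=D$ has affine hull $\R^d$, one can select $d+1$ affinely independent points $x_0,\ldots,x_d\in\supp(X_s)$ with $M:=\max_j|x_j|<\infty$. By continuity of $a,b$ and the normalization $(a(0),b(0))=(0,0)$, there is an open neighborhood $V_0$ of $0$ in $\cU$ on which $|a(u)|+M|b(u)|<2\pi$. For every $u\in V_0$ and every $j$, the number $a(u)+\scal{b(u)}{x_j}$ lies in $2\pi i\Z$ and has modulus strictly below $2\pi$, hence equals $0$. Since the $x_j$ are affinely independent, the linear system $a(u)+\scal{b(u)}{x_j}=0$ $(j=0,\ldots,d)$ forces $a(u)=0$ and $b(u)=0$ throughout $V_0$.

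Finally, I would globalize via a connectedness argument. Let $U_0:=\{u\in\cU:\,a(u)=0,\,b(u)=0\}$; it is closed by continuity and contains $0$. The same local argument centered at any $u^\ast\in U_0$ shows that $U_0$ is also open. Since $\cU=P+i\R^d$ with $P$ the (real) polar cone of $D$, $\cU$ is a closed convex cone containing the origin, hence path-connected, so $U_0=\cU$. This gives $\phi=\tilde\phi$ and $\psi=\tilde\psi$ throughout $\cU$. The main subtlety is the transfer of an a.s.\ identity (with an $\omega$-dependent null set) to a deterministic statement on $\supp(X_s)$, combined with the crucial use of $u$-continuity and the normalization at $u=0$: without these, a lattice-supported $X_s$ already admits solutions differing by integer multiples of $2\pi i$.
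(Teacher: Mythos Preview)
Your proof is correct and reaches the same conclusion as the paper's, but via a somewhat different route. The paper fixes $\omega$ on a full-measure set and argues that the continuous map $u\mapsto p(u)+\scal{q(u)}{X_s(\omega)}$ from the (simply) connected set $\cU$ into the discrete set $2\pi i\Z$ must be constant, hence zero; only afterwards does it pass to $\supp(X_s)$ and use convex combinations over $D$ together with its full linear span. You reverse the order: for each fixed $u$ you immediately transfer the a.s.\ lattice constraint to a deterministic constraint on $\supp(X_s)$, select $d+1$ affinely independent support points, and then run a local smallness plus clopen-in-connected argument in $u$. Your route needs only connectedness of $\cU$ (not simple connectedness) and cleanly sidesteps the $u$-dependent null-set issue that the paper's argument glosses over (it is repairable there via a countable dense subset and continuity, but this is not spelled out). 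The paper's version is a bit shorter; yours is more self-contained and makes the role of the normalization $a(0)=b(0)=0$ explicit.
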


\begin{proof}
Fix $0 < s \le t$ and suppose that $\wt \phi_s(t,u)$ and $\wt \psi_s(t,u)$ are also continuous in $u \in \cU$ and satisfy \eqref{cond:affine}. Write $p_s(t,u) := \wt \phi_s(t,u) - \phi_s(t,u)$ and $q_s(t,u) := \wt \phi_s(t,u) - \phi_s(t,u)$. Due to \eqref{cond:affine} it must hold that 
\[p_s(t,u) + \scal{q_s(t,u)}{X_s} \quad \text{takes values in } \quad \set{2 \pi i k: k \in \NN}\quad a.s. \;\forall\, u \in \cU.\]
However, the set $\cU$ is simply connected, and hence its image under a continuous function must also be simply connected. It follows that $u \mapsto p_s(t,u) + \scal{q_s(t,u)}{X_s}$ is constant on $\cU$ and therefore equal to $p_s(t,0) + \scal{q_s(t,0)}{X_s} = 0$. Hence, 
\[p_s(t,u) + \scal{q_s(t,u)}{x} = 0, \]
for all $x \in \supp(X_s)$ and $u \in \cU$. Taking convex combinations, the equality can be extended for $x\in D$. Since $D$ has full linear span, we conclude that 
$p_s(t,u) = 0$ and $q_s(t,u) = 0$ for all $u \in \cU$, completing the proof.
\end{proof}

\begin{definition}\label{def:quasi_regular}
An affine semimartingale is called \emph{quasi-regular}, if the following holds:
\begin{enumerate}[(i)]
\item The functions $\phi$ and $\psi$ are of finite variation in $s$ and c\`adl\`ag in both $s$ and $t$. More precisely, we assume that for all $(t,u) \in \Rplus \times \cU$
\[s \mapsto \phi_s(t,u) \quad \text{and} \quad s \mapsto \psi_s(t,u)\]
are c\`adl\`ag functions of finite variation on $[0,t]$, and for all $(s,u) \in \Rplus \times \cU$ 
\[t \mapsto \phi_s(t,u) \quad \text{and} \quad t \mapsto \psi_s(t,u)\]
are c\`adl\`ag functions on $[s, \infty)$.
\item For all $0 < s \le t$, the functions 
\[u \mapsto \phi_{s-}(t,u) \quad \text{and} \quad u \mapsto \psi_{s-}(t,u)\]
are continuous on $\cU$.
\end{enumerate}
\end{definition}

\begin{remark}
Definition~\ref{def:quasi_regular} should be compared to the assumptions imposed in \cite{DuffieFilipovicSchachermayer} and \cite{Filipovic05}. In both papers technical `regularity conditions' are defined. In \cite{DuffieFilipovicSchachermayer, Filipovic05} $\phi$ and $\psi$ are automatically continuous in their first argument, due to the stochastic continuity of $X$. In addition they are assumed continuously differentiable from the right, with a derivative that is continuous in $u$. Thus, (i) and (ii) are clearly milder than the regularity assumptions in \cite{DuffieFilipovicSchachermayer} or \cite{Filipovic05}.
\end{remark}

%%%%%%%%%%%%%%%%%%%%%%%%%%%%%%%%%%%%%%%%%%%%%%%%%%%%%%%%%%%%%%%%%%%

\subsection{First results on $\phi$ and $\psi$}

We proceed to show first analytic results on the functions $\phi$ and $\psi$ from \eqref{cond:affine}. 

\begin{lemma}\label{lem:flow_cont} Let $X$ be an affine semimartingale satisfying the support condition~\ref{def:full_support}. Then,
\begin{enumerate}[(i)]
\item  the function $u \mapsto \phi_s(t,u)$ maps $\cU$ to $\C_{\le 0}$ and $u \mapsto \psi(t,u)$ maps $\cU$ to $\cU$, for all $0<s\le t$,
\item $\phi$ and $\psi$ satisfy the \emph{semi-flow property}, i.e.  for all  $0 < s \le r \le t$ and $u \in \cU$, 
\begin{align}\label{eq:flow}
\begin{array}{r@{}lrl}
\phi_s(t,u) &= \phi_r(t,u) + \phi_s(r,\psi_r(t,u)), &\qquad \phi_t(t,u) &= 0\\
\psi_s(t,u) &= \psi_s(r,\psi_r(t,u)), &\qquad \psi_t(t,u) &= u.
\end{array}
\end{align}
\end{enumerate}
\end{lemma}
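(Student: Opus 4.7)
For part (i), the strategy is to extract bounds on $\phi$ and $\psi$ directly from the fact that the conditional characteristic function in \eqref{cond:affine} has modulus at most $1$. Since $u \in \cU$ and $X_t \in D$, we have $\scal{\Re u}{X_t} \le 0$, hence
\[
\bigl|E\bigl[e^{\scal{u}{X_t}}\mid\cF_s\bigr]\bigr| \le 1.
\]
Taking the modulus of the right-hand side of \eqref{cond:affine} yields
\[
\Re\phi_s(t,u) + \scal{\Re\psi_s(t,u)}{X_s} \le 0 \quad \text{a.s.}
\]
This inequality holds on the support of $X_s$, and by taking convex combinations and using $\conv(\supp(X_s)) = D$ (Condition~\ref{def:full_support}), it extends to all $x \in D$. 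Now I exploit the cone structure of $D$: for fixed $x \in D$ and $\lambda > 0$, $\lambda x \in D$, so $\Re\phi_s(t,u) + \lambda\scal{\Re\psi_s(t,u)}{x} \le 0$. Sending $\lambda \to \infty$ forces $\scal{\Re\psi_s(t,u)}{x}\le 0$ for all $x \in D$, that is $\psi_s(t,u)\in\cU$; sending $\lambda \to 0^{+}$ then yields $\Re\phi_s(t,u)\le 0$.

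For part (ii), the semi-flow identities come from conditioning in two steps. For $0 < s \le r \le t$, the tower property gives
\[
E\bigl[e^{\scal{u}{X_t}}\mid\cF_s\bigr]
 = E\bigl[E[e^{\scal{u}{X_t}}\mid\cF_r]\mid\cF_s\bigr]
 = e^{\phi_r(t,u)}\,E\bigl[e^{\scal{\psi_r(t,u)}{X_r}}\mid\cF_s\bigr].
\]
Because $\psi_r(t,u) \in \cU$ by part (i), the affine property \eqref{cond:affine} applies to the inner expectation with argument $\psi_r(t,u)$, yielding
\[
E\bigl[e^{\scal{u}{X_t}}\mid\cF_s\bigr]
 = \exp\!\bigl(\phi_r(t,u) + \phi_s(r,\psi_r(t,u)) + \scal{\psi_s(r,\psi_r(t,u))}{X_s}\bigr).
\]
Comparing with the direct representation from \eqref{cond:affine} gives two affine-exponential representations of the same random variable, both continuous in $u$ on $\cU$ (continuity of the composition follows from the continuity hypotheses and part (i)). The uniqueness Lemma~\ref{lem:uniqueness}, which relies exactly on Condition~\ref{def:full_support}, then forces the equalities
\[
\phi_s(t,u) = \phi_r(t,u) + \phi_s(r,\psi_r(t,u)),\qquad \psi_s(t,u) = \psi_s(r,\psi_r(t,u)).
\]
The boundary conditions $\phi_t(t,u)=0$ and $\psi_t(t,u)=u$ follow by setting $s=t$ in \eqref{cond:affine}, since the left-hand side then equals $e^{\scal{u}{X_t}}$, and invoking uniqueness once more.

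The only delicate point is the appeal to Lemma~\ref{lem:uniqueness}: it requires continuity in $u$ of the candidate representations, so before quoting it one must observe that $u \mapsto \phi_r(t,u) + \phi_s(r,\psi_r(t,u))$ and $u \mapsto \psi_s(r,\psi_r(t,u))$ are continuous on $\cU$, which is immediate from part (i) (ensuring $\psi_r(t,u) \in \cU$) together with the continuity of $\phi$ and $\psi$ in their last argument assumed in Definition~\ref{def:affine}. Everything else is a routine manipulation of characteristic functions.
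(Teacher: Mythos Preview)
Your proof is correct and follows essentially the same route as the paper: the modulus bound combined with the cone structure of $D$ for part~(i), and the tower property plus the uniqueness argument of Lemma~\ref{lem:uniqueness} for part~(ii). The only cosmetic difference is that you spell out the scaling argument $\lambda \to \infty$, $\lambda \to 0^+$ explicitly where the paper simply writes ``since $D$ is a cone,'' and you are a bit more careful to note why the composed exponent is continuous in $u$ before invoking uniqueness.
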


\begin{proof}
To show the first property,  recall that by Equation \eqref{cond:affine} we have 
\begin{equation}\label{eq:affine_repeat}
E\big[ e^{\scal{u}{X_{t}}} |\cF_{s} \big] = \exp\big( \phi_s(t,u) + \scal{\psi_s(t,u)}{X_s} \big)
\end{equation}
for all $u \in \cU$ and $0 \le s \le t$.
Since $\scal{\Re u}{X_{t}} \le 0$, a.s., the left hand side is bounded by one in absolute value. Thus, also
\[ \Re \phi_s(t,u) + \scal{\Re \psi_s(t,u)}{X_s} \le 0,\  a.s. \]
and consequently 
\[ \Re \phi_s(t,u) + \scal{\Re \psi_s(t,u)}{x} \le 0, \quad \text{for all $x \in \supp(X_s)$.} \]
Taking arbitrary convex combinations of these inequalities and using that $\conv(\supp(X_s)) = D$ by Condition~\ref{def:full_support}, we obtain that the inequality must in fact hold for all $x \in D$. Since $D$ is a cone this implies that $\Re \phi_s(t,u) \le 0$ and $\psi_s(t,u) \in \cU$, proving (i).

To show the semi-flow equations we apply iterated expectations to the left hand side of \eqref{eq:affine_repeat}, yielding 
\begin{align*}
E\big[ E\big[ e^{\scal{u}{X_t}} |\cF_{r} \big] |\cF_{s} \big] &= E\big[ \exp\big( \phi_r(t,u) + \scal{\psi_r(t,u)}{X_r} \big) |\cF_{s} \big] = \\
&= \exp\big( \phi_s(r,u) + \phi_s(r,\psi_r(t,u)) + \scal{\psi_s(r,\psi_r(t,u))}{X_s} \big).
\end{align*}
Note that the exponent on the right hand side is continuous in $u$ and that the same holds true for \eqref{eq:affine_repeat}. By the same argument as in the proof of Lemma~\ref{lem:uniqueness} we conclude that 
\[\phi_s(t,u) + \scal{\psi_s(t,u)}{x} = \phi_s(r,u) + \phi_s(r,\psi_r(t,u)) + \scal{\psi_s(r,\psi_r(t,u))}{x},\]
for all $x \in D$. Since the linear hull of $D$ is $\R^d$ the semi-flow equations \eqref{eq:flow} follow. Note that the terminal conditions $\psi_t(t,u) = u$ and $\phi_t(t,u) = 0$ are a simple consequence of $\cond{\exp(\scal{u}{X_t})}{\cF_t} = \exp(\scal{u}{X_t})$ and the uniqueness property from Lemma~\ref{lem:uniqueness}.
\end{proof}
\begin{remark}
Note that $s = 0$ is excluded from the semi-flow equations, since Condition~\ref{def:full_support}does not apply to the initial value $X_0$ of $X$. However, as soon as quasi-regularity is imposed, the c\`adl\`ag property of $\phi$ and $\psi$ immediately allows to extend the semi-flow equations also to $s= 0$.
\end{remark}
\begin{remark}\label{rem:big_flow}
To express the semi-flow equations in a more succinct matter, it is sometimes convenient to introduce the following `big-flow'-notation. Define the set $\wh{\cU} := \C_{\le 0}\times \cU$ and denote its elements by $\wh{u} = (u_0,u)$. Define 
\[\Psi_s(t, \wh{u}) := \begin{pmatrix}\phi_s(t,u) + u_0\\ \psi_s(t,u) \end{pmatrix}.\]
Part (i) of Lemma \ref{lem:flow_cont} is equivalent to the claim that $u \mapsto \Psi_s(t,u)$ maps $\wh{\cU}$ to $\wh{\cU}$ and part (ii) is equivalent to 
\[\Psi_s(t,\wh{u}) = \Psi_s(r,\Psi_r(t,\wh{u})), \qquad \Psi_t(t,\wh{u}) = \wh{u},\]
for all $0 < s \le r \le t$ and $\wh{u} \in \wh{\cU}$.
\end{remark}

\begin{lemma}\label{lem:affine_limits}
Let $X$ be a quasi-regular affine semimartingale. Then,
\begin{align}
E\big[ e^{\scal{u}{X_{t-}}} |\cF_{s} \big] &= \exp\big( \phi_{s}(t\uminus,u) + \scal{\psi_{s}(t\uminus,u)}{X_{s}} \big), \quad \forall\,0 \le s < t, u \in \cU.\label{eq:Utcond1}\\
E\big[ e^{\scal{u}{X_{t}}} |\cF_{s-} \big] &= \exp\big( \phi_{s-}(t,u) + \scal{\psi_{s-}(t,u)}{X_{s-}} \big), \quad \forall\,0 < s \le t, u \in \cU \label{eq:Utcond2}.
\end{align}
If in addition $X$ satisfies the support condition~\ref{def:full_support}, it also holds that 
\begin{equation}\label{eq:jump}
E\big[ e^{\scal{u}{ \Delta X_{t}}} |\cF_{t-} \big] = \exp\big(- \Delta \phi_{t}(t,u) - \scal{\Delta \psi_{t}(t,u)}{X_{t-}} \big), \quad \forall\; (t,u) \in \Rplus \times \cU.
\end{equation}
\end{lemma}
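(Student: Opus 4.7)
The plan is to deduce the three identities by passing to one-sided limits in the defining affine relation~\eqref{cond:affine}, exploiting the c\`adl\`ag paths of $X$ together with the quasi-regularity of $(\phi,\psi)$, and then to obtain~\eqref{eq:jump} as a direct consequence of~\eqref{eq:Utcond2} specialized at $s=t$.

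For~\eqref{eq:Utcond1} I fix $0\le s<t$ and apply~\eqref{cond:affine} at $(s,r)$ with $s\le r<t$, giving
\[
E\big[e^{\scal{u}{X_r}} | \cF_s\big] = \exp\bigl(\phi_s(r,u)+\scal{\psi_s(r,u)}{X_s}\bigr).
\]
Letting $r\uparrow t$, the c\`adl\`ag property of $X$ yields $X_r\to X_{t-}$ a.s., and since $|e^{\scal{u}{X_r}}|\le 1$ by the definition of $\cU$, conditional dominated convergence identifies the limit of the left-hand side with $E[e^{\scal{u}{X_{t-}}}|\cF_s]$. On the right, quasi-regularity provides $\phi_s(r,u)\to \phi_s(t\uminus,u)$ and $\psi_s(r,u)\to\psi_s(t\uminus,u)$, and continuity of the exponential and of the inner product yields~\eqref{eq:Utcond1}.

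For~\eqref{eq:Utcond2} I fix $0<s\le t$ and apply~\eqref{cond:affine} at $(r,t)$ with $r<s$. Letting $r\uparrow s$, the c\`adl\`ag property gives $X_r\to X_{s-}$, while quasi-regularity gives $\phi_r(t,u)\to\phi_{s-}(t,u)$ and $\psi_r(t,u)\to\psi_{s-}(t,u)$, so the right-hand side converges pointwise. For the left-hand side the backward-in-$r$ (i.e.\ increasing-filtration) martingale convergence theorem, applied to the bounded random variable $e^{\scal{u}{X_t}}$ and the filtration $(\cF_r)_{r<s}$, whose generated $\sigma$-algebra is $\cF_{s-}$, delivers $E[e^{\scal{u}{X_t}}|\cF_r]\to E[e^{\scal{u}{X_t}}|\cF_{s-}]$ almost surely.

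Finally, for~\eqref{eq:jump} I specialize~\eqref{eq:Utcond2} to $s=t>0$ and multiply by the $\cF_{t-}$-measurable factor $e^{-\scal{u}{X_{t-}}}$, which together with $\Delta X_t=X_t-X_{t-}$ yields
\[
E\big[e^{\scal{u}{\Delta X_t}}|\cF_{t-}\big]=\exp\bigl(\phi_{t-}(t,u)+\scal{\psi_{t-}(t,u)-u}{X_{t-}}\bigr).
\]
Under Condition~\ref{def:full_support}, the terminal identities $\phi_t(t,u)=0$ and $\psi_t(t,u)=u$ from Lemma~\ref{lem:flow_cont} rewrite as $\phi_{t-}(t,u)=-\Delta\phi_t(t,u)$ and $\psi_{t-}(t,u)-u=-\Delta\psi_t(t,u)$, giving exactly~\eqref{eq:jump}; the case $t=0$ is trivial, since $\Delta X_0=0$ and both $\Delta\phi_0(0,u)$ and $\Delta\psi_0(0,u)$ vanish. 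The only delicate points are the two limit-exchange steps, justified respectively by conditional dominated convergence (via the uniform bound $|e^{\scal{u}{x}}|\le 1$ on $D$) and by backward martingale convergence for the increasing family $(\cF_r)_{r<s}$; the rest is bookkeeping.
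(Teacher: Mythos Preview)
Your proof is correct and follows essentially the same approach as the paper: take left limits in $t$ for \eqref{eq:Utcond1} using dominated convergence, take left limits in $s$ for \eqref{eq:Utcond2} using martingale convergence, and then specialize \eqref{eq:Utcond2} at $s=t$ together with the terminal conditions $\phi_t(t,u)=0$, $\psi_t(t,u)=u$ from Lemma~\ref{lem:flow_cont} to obtain \eqref{eq:jump}. Your write-up is somewhat more explicit about the justification of the limit exchanges and handles the boundary case $t=0$ in \eqref{eq:jump}, but the substance is identical.
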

\begin{proof}
The first expression, \eqref{eq:Utcond1}, follows by taking left limits in $t$ on both sides of \eqref{cond:affine}. On the right hand side, the limit is well-defined by the c\`adl\`ag property of $\phi$ and $\psi$ in $t$. On the left hand side, dominated convergence and the c\`adl\`ag property of $X$ yield \eqref{eq:Utcond1}. Equation \eqref{eq:Utcond2} follows from a similar argument, now taking left limits in $s$. Indeed, note that for any integrable random variable $Y$ martingale convergence yields that that $\lim_{\epsilon \downarrow 0} E\big[Y |\cF_{s-\epsilon} \big] = E\big[Y |\cF_{s-} \big]$. Equation~\eqref{eq:jump} follows by evaluating \eqref{eq:Utcond2} at $s = t$ and noting that 
$\Delta \phi_t(t,u) = \phi_t(t,u) - \phi_{t-}(t,u) = -\phi_{t-}(t,u)$, and $\Delta \psi_t(t,u) = \psi_t(t,u) - \psi_{t-}(t,u) = u -\psi_{t-}(t,u)$, due to Lemma~\ref{lem:flow_cont}.
\end{proof}

\begin{lemma}\label{lem:double_limit}
Let $X$ be a quasi-regular affine semimartingale satisfying the support condition~\ref{def:full_support}. Then, 
\begin{enumerate}[(i)]
\item for all $(s,u) \in \Rplus \times \cU$ the functions
\[t \mapsto \phi_{s-}(t,u), \qquad t \mapsto \psi_{s-}(t,u)\]
are c\`adl\`ag on $[s, \infty)$.
\item The `double limits' $\phi_{s-}(t\uminus,u)$ and $\psi_{s-}(t\uminus,u)$ are well-defined and independent of the order of limits, i.e.,  
\[\lim_{\epsilon \downarrow 0} \psi_{s-}(t - \epsilon,u) = \lim_{\delta \downarrow 0} \psi_{s - \delta}(t-,u),\]
and similarly for $\phi$.
\item The semi-flow equations \eqref{eq:flow} still hold when $s$ is replaced by $s-$ or $t$ is replaced by $t-$ (or both).
\item It holds that 
\[E\big[ e^{\scal{u}{X_{t-}}} |\cF_{s-} \big] = \exp\big( \phi_{s-}(t\uminus,u) + \scal{\psi_{s-}(t\uminus,u)}{X_{s-}} \big), \label{eq:Utcond3}\]
for all $0 < s \le t$ and $u \in \cU$.
\item For all $u \in \cU$ and $0\leq s < t$ it holds that
\begin{align}\label{eq:computeDeltaphipsi}
\begin{split}
	\Delta \phi_s(t,u) & = \Delta \phi_s(s,\psi_s(t,u)), \\
	\Delta \psi_s(t,u) & = \Delta \psi_s(s,\psi_s(t,u)).
\end{split}
\end{align}
\end{enumerate}
\end{lemma}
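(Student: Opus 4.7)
The plan is to derive all five statements from a single backbone: specialize the semi-flow equation \eqref{eq:flow} of Lemma~\ref{lem:flow_cont}(ii) to $r = s$ and pass the left limit $s' \uparrow s$ through it, justified by the continuity of $u \mapsto \phi_s(t,u)$ from Definition~\ref{def:affine} and of $u \mapsto \phi_{s-}(t,u)$ from quasi-regularity (ii). Setting $r = s$ in \eqref{eq:flow} and then $t = s$ yields the terminal identities $\phi_s(s,u) = 0$ and $\psi_s(s,u) = u$ for every $u \in \cU$. Applying $s' \uparrow s$ in $\phi_{s'}(t,u) = \phi_s(t,u) + \phi_{s'}(s,\psi_s(t,u))$ (valid for $s' < s \le t$; the case $s = 0$ is covered by the remark following Lemma~\ref{lem:flow_cont}) and using the càdlàg property in the first argument gives the key identity
\begin{equation}\label{eq:key_plan}
\phi_{s-}(t,u) = \phi_s(t,u) + \phi_{s-}(s,\psi_s(t,u)), \qquad \psi_{s-}(t,u) = \psi_{s-}(s,\psi_s(t,u)),
\end{equation}
which is well posed because $\psi_s(t,u) \in \cU$ by Lemma~\ref{lem:flow_cont}(i); in the notation of Remark~\ref{rem:big_flow} this is simply $\Psi_{s-}(t,\wh u) = \Psi_{s-}(s,\Psi_s(t,\wh u))$.

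Part (i) is immediate from \eqref{eq:key_plan}: the right-hand side is a sum of the càdlàg-in-$t$ function $\phi_s(\cdot,u)$ and a composition of the càdlàg function $\psi_s(\cdot,u)$ with the continuous-in-$u$ maps $\phi_{s-}(s,\cdot)$ and $\psi_{s-}(s,\cdot)$, which preserves the càdlàg property. For (iii) I push $s' \uparrow s$ or $t' \uparrow t$ directly through the semi-flow equations \eqref{eq:flow}, moving the limit inside $\psi_r(t,u)$ by continuity in $u$ of $\phi_r(t,\cdot)$ or $\phi_{s-}(r,\cdot)$. Part (ii) then follows by applying the $t$-left limit (justified by (i)) to \eqref{eq:key_plan} and observing that both iteration orders collapse to the same expression $\phi_s(t-,u) + \phi_{s-}(s,\psi_s(t-,u))$, and analogously for $\psi$.

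For (iv) I take $t' \uparrow t$ in \eqref{eq:Utcond2}: the left-hand side converges by dominated convergence, since the integrand is bounded in modulus by $1$ and $X_{t'} \to X_{t-}$ by the càdlàg property of $X$, while the right-hand side converges by part (i) and continuity in $u$. Finally, (v) is pure algebra on \eqref{eq:key_plan}: subtracting $\phi_s(t,u)$ from both sides yields $\Delta\phi_s(t,u) = -\phi_{s-}(s,\psi_s(t,u))$, while specialising \eqref{eq:key_plan} at $t = s$ and invoking $\phi_s(s,v) = 0$ gives $\Delta\phi_s(s,v) = -\phi_{s-}(s,v)$; substituting $v = \psi_s(t,u)$ matches the two. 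The $\psi$-case is identical with $\psi_s(s,v) = v$ in place of the zero.

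The only real bookkeeping to watch is the co-domain issue, i.e.\ that the inner argument $\psi_s(t,u)$ lies in $\cU$ so that $\phi_{s-}(s,\cdot)$ and $\psi_{s-}(s,\cdot)$ may be evaluated there; this is guaranteed by Lemma~\ref{lem:flow_cont}(i). Beyond this, no new probabilistic estimate is needed: the entire lemma is a disciplined exploitation of the semi-flow property together with the continuity and càdlàg regularity of $\phi$ and $\psi$ that quasi-regularity provides.
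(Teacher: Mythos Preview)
Your proof is correct and follows essentially the same approach as the paper: both arguments reduce everything to the semi-flow equation and pass left limits through it, using the c\`adl\`ag property in the first argument and the continuity of $\phi_{s-}(s,\cdot)$, $\psi_{s-}(s,\cdot)$ in $u$ supplied by quasi-regularity. The only organisational difference is that you isolate the identity $\Psi_{s-}(t,\wh u)=\Psi_{s-}(s,\Psi_s(t,\wh u))$ once and reuse it, whereas the paper rederives ad hoc decompositions in each part (e.g.\ splitting at $t$ rather than at $s$ for right-continuity in (i)); the substance is the same.
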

\begin{proof}
We show claims (i), (ii) and (iii) for $\psi$ only. The proof can easily be extended to $\phi$, e.g. by using the `Big flow' argument of Remark~\ref{rem:big_flow}. To show right continuity in (i), we write 
\begin{align*}
\lim_{\epsilon \downarrow 0} \psi_{s-}(t+\epsilon,u) &= \lim_{\epsilon \downarrow 0} \psi_{s-}(t, \psi_t(t+\epsilon,u)) = \psi_{s-}\Big(t,  \lim_{\epsilon \downarrow 0} \psi_t(t+\epsilon,u)\Big) = \\ &= \psi_{s-}(t, \psi_t(t,u)) = \psi_{s-}(t,u).
\end{align*}
Here, we have used the flow property, the continuity of $\psi_{s-}(t,u)$ in $u$ and finally the right-continuity of $\psi_s(t,u)$ in $t$.
As for the left limit, the equality
\begin{align*}
\lim_{\epsilon \downarrow 0} \psi_{s-}(t-\epsilon,u) &= \lim_{\epsilon \downarrow 0} \psi_{s-}(s, \psi_s(t-\epsilon,u)) = \psi_{s-}\Big(s,  \lim_{\epsilon \downarrow 0} \psi_t(t-\epsilon,u)\Big) = \\ &= \psi_{s-}(s, \psi_s(t-,u))
\end{align*}
shows that the left limit exists. Moreover, 
\[\psi_{s-}(s, \psi_s(t-,u)) =  \lim_{\delta \downarrow 0} \psi_{s-\delta}(s, \psi_s(t-,u)) = \lim_{\delta \downarrow 0} \psi_{s-\delta}(t-,u))\]
shows exchangeability of the limits in (ii). Claim (iii) follows from the semi-flow equations \eqref{eq:flow} by taking left limits in $s$, left limits in $t$, or both.
Similarly, claim (iv) follows from \eqref{eq:Utcond1} by taking left limits in $s$, or from \eqref{eq:Utcond2} by taking left limits in $t$.

For (v) we apply the semi-flow property \eqref{eq:flow} for $r=s$ and obtain that
\begin{align*}
   \Delta \phi_s(t,u) &=  \phi_s(t,u) -  \phi_{s-}(t,u) 
   		= \phi_s(s,\psi_s(t,u)) -  \phi_{s-}(s,\psi_s(t,u)) 
\end{align*}
and the first part of \eqref{eq:computeDeltaphipsi} follows. The second part follows analogously.
 \end{proof}

\section{The characterization of affine semimartingales}\label{sec:characterization}

In this section we derive the representation of affine semimartingales via their semimartingale characteristics as well as generalized measure Riccati equations for the coefficients $\phi$ and $\psi$. It turns out that the class of affine semimartingales substantially generalizes the class of stochastically continuous affine processes: first, jumps at fixed time points are allowed and second, the jump height may depend on the state of the process.

Throughout, we will use the short-hand notation $\alpha=(\alpha_0,\bar \alpha)$ for a generic $d+1$-dimensional vector  $\alpha=(\alpha_0,\dots,\alpha_d)$. Moreover, we denote by $\cS^{d}_+$ the convex cone of symmetric positive semi-definite $d\times d$ matrices. Given characteristics $(B,C,\nu)$ of a semimartingale $X$, recall from \cite[Eq.~II.1.23, Prop.~II.2.6]{JacodShiryaev} that $C$ is always continuous and $B$ can be decomposed as $B = B^c + \sum \Delta B$. Furthermore, also a `continuous part' $\nu^c$ of $\nu$ can be defined by
\begin{equation}\label{eq:cont_nu}
\begin{split}
\cJ &:= \set{(\omega,t): \nu(\omega,\set{t},D) > 0}\\
\nu^c(\omega,dt,dx) &:= \nu(\omega,dt,dx) \bI_{\cJ^\complement}(\omega,t).
\end{split}
\end{equation}
Finally, if one chooses a `good version' (as we always do) of the characteristics, then
\begin{equation}\label{eq:B_jumps}
\Delta B_t = \int_D h(x) \nu(\set{t}, dx),
\end{equation}
where $h$ is the truncation function for the jumps; cf. \cite[Prop.~II.2.9]{JacodShiryaev}.
We introduce the following definition, which will be needed to formulate our main results.

\begin{definition}\label{def:good-parameter}
Let $A$ be a non-decreasing c\`adl\`ag function with continuous part $A^c$ and jump points $J^A\coloneqq\set{t\ge 0\vert \Delta A_t> 0}$. Let $(\gamma,\beta,\alpha,\mu)=\brac{\gamma_i,\beta_i,\alpha_i,\mu_i}_{i\in\setd}$ be functions such that $\gamma_0\colon \R_{\ge0}\times\cU\rightarrow\C$, $\bar{\gamma}\colon \R_{\ge0}\times\cU\rightarrow \C^d$, $\beta_i\colon\R_{\geq0}\rightarrow\R^d$, $\alpha_i\colon\R_{\ge0}\rightarrow\mathcal{S}^d$ and $\brac{\mu_i\brac{t,\cdot}}_{t\ge 0}$ are families of (possibly signed) Borel measures on $D\setminus\set{0}$.
We call $(A,\gamma,\beta,\alpha,\mu)$ a \emph{good parameter set} if for all $i\in\setd$,
\begin{enumerate}[(i)]
\item $\alpha_i$ and $\beta_i$ are locally integrable w.r.t. $A^c$,
\item for all compact sets $K\subset D\setminus\set{0}$, $\mu\brac{\cdot,K}$ is locally $A^c$-integrable.
\item$\gamma(t,u) = 0$ for all $(t,u) \in (\R_{\ge 0} \setminus J^A) \times \cU$. 
\end{enumerate}
\end{definition}

\begin{theorem}\label{prop1}
Let $X$ be a quasi-regular affine semimartingale satisfying the support condition~\ref{def:full_support}. Then there exists a good parameter set $(A,\gamma,\beta, \alpha, \mu)$ such that
the semimartingale characteristics $(B,C,\nu)$ of $X$ w.r.t. the truncation function $h$ satisfy, $\PP$-a.s. for any $t >0$, 
\begin{subequations}\label{char:affine}
\begin{align} 
B^c_t(\omega) &= \int_0^t \big( \beta_0 (s)  + \sum_{i=1}^d X^i_{s-}(\omega) \beta_i(s)   \big) dA^c_s \label{eq:char_B}\\
C_t(\omega) &= \int_0^t \big( \alpha_0 (s) + \sum_{i=1}^d X^i_{s-}(\omega) \alpha_i (s)  \big) dA^c_s \label{eq:char_C}\\
\nu^c(\omega,ds,dx) &= \big( \mu_0 (s,dx) + \sum_{i=1}^d X^i_{s-}(\omega) \mu_i (s,dx)  \big) dA^c_s  \label{eq:char_nu}\\
\int_D \left(e^{\langle u,\xi \rangle}  - 1\right) \nu(\omega,\{t\},d\xi) &= \left(\exp\Big( \gamma_0(t,u) + \sum_{i=1}^d \langle X^i_{t-}(\omega), \bar \gamma_i (t,u) \rangle \Big) - 1\right).\label{eq:char_nu_dis}
\end{align}
\end{subequations}
Moreover, for all $(T,u) \in (0,\infty) \times \cU$, the functions $\phi$ and $\psi$ are absolutely continuous w.r.t $A$ and solve the following generalized measure Riccati equations: their continuous parts satisfy
\begin{align}
\frac{d \phi^c_t(T,u)}{dA^c_t} &= - F(t,\psi_{t}(T,u)), 
\label{ric1}\\
\frac{d \psi^c_t(T,u)}{dA^c_t} &= - R(t,\psi_{t}(T,u)), 
\label{ric2}
\end{align}
$dA^c$-a.e., where
\begin{equation}\label{eq:FR}
\begin{split}
F(s,u) &= \scal{\beta_0(s)}{u} + \half \scal{u}{\alpha_0(s) u} +   \int_D \Big( e^{\scal{x}{u}} -1 - \scal{h(x)}{u}\Big) \mu_0(s,dx)
\\
R_i(s,u) &= \scal{\beta_i(s)}{u} + \half \scal{u}{\alpha_i(s) u} + \int_D \Big( e^{\scal{x}{u}} -1 - \scal{h(x)}{u}\Big) \mu_i(s,dx),
\end{split}
\end{equation}
while their jumps are given by
\begin{align}\label{eq:Deltaphipsi}
\begin{split}
\Delta \phi_{t}(T,u) &= - \gamma_0(t,\psi_t(T,u))   \\
\Delta \psi_{t}(T,u) &= - \bar \gamma(t,\psi_t(T,u)),
\end{split}
\end{align}
and their terminal conditions are
\begin{equation}
\phi_T\brac{T,u}=0 \quad\text{and} \quad\psi_T\brac{T,u}=u.\label{eq:initial-cond}
\end{equation}
\end{theorem}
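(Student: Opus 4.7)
First I would set $\gamma_0(t,u) := -\Delta\phi_t(t,u)$ and $\bar\gamma(t,u) := -\Delta\psi_t(t,u)$. The jump formula \eqref{eq:jump} then reads $E[e^{\scal{u}{\Delta X_t}}\,\vert\,\cF_{t-}] = \exp(\gamma_0(t,u)+\scal{\bar\gamma(t,u)}{X_{t-}})$, and since the conditional law of a predictable jump given $\cF_{t-}$ is encoded by $\nu(\{t\},\cdot)$, identity \eqref{eq:char_nu_dis} follows at once. For the dominating function $A$, fix a countable family $\{(T_n,u_n)\}_{n\ge 1}\subset\R_{\ge 0}\times\cU$ with $T_n\uparrow\infty$ and $\{u_n\}$ dense in $\cU$ and let $J^A$ be the (countable) set of discontinuities of the functions $s\mapsto\phi_s(T_n,u_n)$ and $s\mapsto\psi_s(T_n,u_n)$; by continuity of $\phi_{s-},\psi_{s-}$ in $u$ this exhausts all jump times of $\phi,\psi$. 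I would then build $A$ with $\Delta A>0$ precisely on $J^A$ and with continuous part $A^c$ that dominates simultaneously the continuous variations of $\phi_\cdot(T_n,u_n), \psi_\cdot(T_n,u_n)$ and the continuous parts of the characteristics $B^c, C, \nu^c(\cdot,K)$ for compact $K\subset D\setminus\{0\}$.

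For each fixed $T>0, u\in\cU$ the process $M_s := \exp(\phi_s(T,u)+\scal{\psi_s(T,u)}{X_s}) = E[e^{\scal{u}{X_T}}\vert\cF_s]$ is a bounded martingale. Writing $Y_s = \phi_s(T,u)+\scal{\psi_s(T,u)}{X_s}$, I would apply It\^o's formula to $e^Y$, treating the deterministic c\`adl\`ag finite-variation factors $\phi_\cdot(T,u), \psi_\cdot(T,u)$ by integration by parts against $X$. The vanishing of the drift of $e^Y$ splits into an $A^c$-continuous part and a part supported on $J^A$. The atomic part at $t\in J^A$ reduces to $E[e^{\Delta Y_t}\vert\cF_{t-}] = 1$; combining $\Delta Y_t = \Delta\phi_t(T,u)+\scal{\Delta\psi_t(T,u)}{X_{t-}}+\scal{\psi_t(T,u)}{\Delta X_t}$ with \eqref{eq:jump} and Lemma~\ref{lem:double_limit}(v), this is equivalent to \eqref{eq:Deltaphipsi}, consistent with the chosen $\gamma$. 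The $A^c$-continuous part gives the master identity
\begin{align*}
\frac{d\phi^c_s(T,u)}{dA^c_s} & + \scal{X_{s-}}{\frac{d\psi^c_s(T,u)}{dA^c_s}} + \scal{\psi_{s-}(T,u)}{\frac{dB^c_s}{dA^c_s}} + \tfrac12\,\psi_{s-}(T,u)^\top \frac{dC_s}{dA^c_s}\,\psi_{s-}(T,u) \\
& \quad + \int_D \left(e^{\scal{\psi_{s-}(T,u)}{x}} - 1 - \scal{\psi_{s-}(T,u)}{h(x)}\right) \frac{\nu^c(ds,dx)}{dA^c_s} = 0,
\end{align*}
valid $P\otimes dA^c$-a.e.

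The hard step, and main obstacle, is to extract the affine structure of $(B^c, C, \nu^c)$ from this single scalar identity, in which the densities of the characteristics may a priori depend arbitrarily on $(\omega,s)$. My plan proceeds in two parts. First, by the terminal condition $\psi_s(s,u)=u$ of Lemma~\ref{lem:flow_cont} together with the right-continuity of $\psi$ in its $T$-variable, $\psi_{s-}(T,u)\to u$ as $T\downarrow s$ for $dA^c$-a.e.\ $s$, so varying $(T,u)$ makes $v:=\psi_{s-}(T,u)$ sweep through all of $\cU$. Reading the master identity at a fixed $(s,\omega)$ as a function of $v\in\cU$, the uniqueness of the L\'evy-Khintchine representation determines the densities $(dB^c_s/dA^c_s, dC_s/dA^c_s, d\nu^c_s/dA^c_s)(\omega)$, up to the deterministic $d\phi^c_s, d\psi^c_s$ terms, from the $v$-dependence alone; consequently the densities depend on $\omega$ only through $X_{s-}(\omega)$. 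Second, reading the identity in $x := X_{s-}(\omega)\in D$ and invoking Condition~\ref{def:full_support} to ensure $x$ traverses a set whose convex hull is $D$, the densities must in fact be \emph{affine} in $x$ with deterministic coefficients $\beta_i,\alpha_i,\mu_i$; their local $A^c$-integrability (Definition~\ref{def:good-parameter}(i),(ii)) is immediate from the construction of $A^c$. This establishes \eqref{eq:char_B}--\eqref{eq:char_nu}. Substituting the affine form back into the master identity and matching constant and $X^i_{s-}$-linear coefficients produces the Riccati equations \eqref{ric1}--\eqref{ric2} with $F, R$ as in \eqref{eq:FR}, while the terminal conditions \eqref{eq:initial-cond} are restatements of $\phi_T(T,u)=0, \psi_T(T,u)=u$ from Lemma~\ref{lem:flow_cont}.
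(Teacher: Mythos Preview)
Your overall architecture---apply It\^o to the martingale $M_s=\exp(\phi_s+\scal{\psi_s}{X_s})$, obtain a master identity relating $d\phi^c,d\psi^c$ to the L\'evy--Khintchine functional of the characteristics, and then extract the affine structure---is exactly the paper's strategy (your master identity is Lemma~\ref{lem:characteristics}, and your treatment of the jumps is Lemma~\ref{lem:affinejumps}). The difficulty lies entirely in the extraction step, and here your argument has a genuine gap.

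You want to conclude that the disintegrated characteristics $b_s(\omega),c_s(\omega),K_s(\omega)$ depend on $\omega$ only through $X_{s-}(\omega)$, by comparing the master identity at two points $\omega_1,\omega_2$ with $X_{s-}(\omega_1)=X_{s-}(\omega_2)$. But the master identity holds only $P\otimes dA^c$-a.e.\ for each fixed $(T,u)$; the exceptional set depends on $(T,u)$, and after intersecting over even a countable family you have no control over which pairs $(\omega_1,\omega_2)$ survive at a given $s$. One cannot compare two specific sample paths pointwise in this way. A related problem arises already in your construction of $A^c$: you ask it to dominate the continuous parts of $B^c,C,\nu^c$, but these are \emph{random} processes, so a deterministic dominator is not available a priori---indeed, producing one is part of what has to be proved.

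The paper circumvents both issues by a single device: take $d+1$ \emph{stochastically independent copies} $X^0,\dotsc,X^d$ of $X$ on a product space, form the $(d{+}1)\times(d{+}1)$ matrix $\Theta_t=H(X^0_t,\dotsc,X^d_t)$, and show (Lemma~\ref{lem:inversion}, using the full-convex-span condition) that $\Theta_t$ is invertible on a small interval with positive probability. Then at a \emph{single} $\omega_*$ in this event, the $d+1$ master identities become a linear system that can be inverted pathwise, yielding deterministic parameters $(\beta_i,\alpha_i,\mu_i)$ and a deterministic $A^c:=\cA^c(\omega_*)$ on that interval (Lemma~\ref{lem:local}). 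A covering argument and a compatibility check on overlaps then globalize the parameters. This replaces your ``compare two $\omega$'s'' step with an honest linear-algebraic inversion at one $\omega$, which is why it survives the a.e.\ caveat. Your outline would be repaired by inserting exactly this independent-copies argument in place of the two paragraphs after the master identity.
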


\begin{remark} \label{rem:parameter_Set}
Note that the parameter set $(A, \gamma,\beta, \alpha, \mu)$ is not uniquely determined: indeed, consider some increasing function $A'$ such that $A \ll A'$ and write $g = \frac{dA}{dA'}$ for the Radon-Nikodym density of $A$ with respect to $A'$. It is easy to see that all statements of the theorem remain true for the alternative parameter set $(A', \gamma,g\beta, g \alpha, g \mu)$.
\end{remark}
\begin{remark}
We expect that Theorem~\ref{prop1} can be extended to affine semimartingales with explosion or killing, by adding a `fourth characteristic' (cf. \cite{schnurr2017fourth} and also \cite{cheridito2005equivalent}), which possesses an affine decomposition similar to \eqref{char:affine}. The rigorous formulation of the corresponding results will not be pursued here, and is left for future research.
\end{remark}

The distribution of the jumps of the affine semimartingale occurring at fixed times $t$ can directly be characterized as follows.

\begin{lemma}\label{lem:affinejumps}
Let $X$ be a quasi-regular affine semimartingale satisfying the support condition~\ref{def:full_support} and with characteristics $(B,C,\nu)$. 
\begin{enumerate}[(i)]
\item For any $(t,u) \in (0,\infty) \times \cU$, 
\begin{equation}\label{eq:jump2}
	\int_{D} \left(e^{\scal u \xi} - 1\right) \nu(\omega; \{t\},d\xi) 
		= \exp\Big( - \Delta \phi_t(t,u) - \scal {\Delta \psi_t(t,u)} {X_{t-}}  \Big) - 1.
\end{equation}
\item Set
\begin{equation}\label{eq:J}
\begin{split}
J^\nu &:= \set{t > 0: \PP(\nu(\omega,\set{t},D) >0) > 0}\\
J^{\phi,\psi} &:= \set{t > 0: \exists\,u \in \cU \text{ such that } \Delta \phi_t(t,u) \neq 0 \text{ or } \Delta \psi_t(t,u) \neq 0}.
\end{split}
\end{equation}
Then $J^\nu = J^{\phi,\psi}$.
\item Set $\gamma_0(t,u) = -\Delta\phi_t(t,u)$ and $\bar\gamma(t,u) = - \Delta \psi_t(t,u)$. Then \eqref{eq:char_nu_dis} and \eqref{eq:Deltaphipsi} hold true and $\gamma = (\gamma_0, \bar \gamma)$ is a good parameter in the sense of Definition~\ref{def:good-parameter} whenever $J^\nu \subset J^A$. 
\end{enumerate}
\end{lemma}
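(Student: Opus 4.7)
For part (i), the plan is to combine the conditional jump identity \eqref{eq:jump} from Lemma~\ref{lem:affine_limits} with the standard identification of $\nu(\{t\},\cdot)$ as the $\cF_{t-}$-conditional distribution of $\Delta X_t$ on $D\setminus\{0\}$ (cf. Jacod--Shiryaev, Prop.~II.1.17). Since $e^{\scal u \xi}-1$ vanishes at $\xi=0$, this gives
\begin{equation*}
\int_D (e^{\scal u \xi}-1)\,\nu(\omega;\{t\},d\xi)
= E\!\left[e^{\scal u {\Delta X_t}}\,\big|\,\cF_{t-}\right]-1,
\end{equation*}
and substituting \eqref{eq:jump} immediately yields \eqref{eq:jump2}.

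For part (ii), I would prove the two inclusions separately using (i). If $t\notin J^{\phi,\psi}$, then $\Delta\phi_t(t,u)=\Delta\psi_t(t,u)=0$ for all $u\in\cU$, so the right-hand side of \eqref{eq:jump2} vanishes identically in $u$. Restricting to $u=iv$ with $v\in\R^d$ (which lies in $\cU$ since $\Re u=0$), the Fourier transform of the finite measure $\nu(\omega;\{t\},\cdot)$ on $D\setminus\{0\}$ is constant, which forces $\nu(\omega;\{t\},D)=0$ almost surely, hence $t\notin J^\nu$. Conversely, if $t\notin J^\nu$, then the left-hand side of \eqref{eq:jump2} is a.s.\ zero for every $u\in\cU$, so
\begin{equation*}
-\Delta\phi_t(t,u)-\scal{\Delta\psi_t(t,u)}{X_{t-}}\in 2\pi i\,\Z\quad\text{a.s., for every }u\in\cU.
\end{equation*}
Here I would invoke the same connectedness argument as in Lemma~\ref{lem:uniqueness}: by quasi-regularity $u\mapsto\Delta\phi_t(t,u)+\scal{\Delta\psi_t(t,u)}{X_{t-}}$ is continuous on the simply connected set $\cU$, vanishes at $u=0$, so the above expression must equal $0$ a.s.\ for every $u$. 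Taking convex combinations over $\supp(X_{t-})$ and using Condition~\ref{def:full_support} together with the full linear span of $D$ then forces $\Delta\phi_t(t,u)=\Delta\psi_t(t,u)=0$ for all $u\in\cU$, i.e.\ $t\notin J^{\phi,\psi}$.

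For part (iii), with $\gamma_0(t,u):=-\Delta\phi_t(t,u)$ and $\bar\gamma(t,u):=-\Delta\psi_t(t,u)$, the identity \eqref{eq:char_nu_dis} is a direct restatement of \eqref{eq:jump2}, while \eqref{eq:Deltaphipsi} reduces to Lemma~\ref{lem:double_limit}(v) after substituting the definitions of $\gamma_0,\bar\gamma$. It remains to verify the three clauses of Definition~\ref{def:good-parameter} for $\gamma$. Clauses (i) and (ii) concern only $\alpha,\beta,\mu$, so nothing is to check. For clause (iii), suppose $t\notin J^A$; the assumption $J^\nu\subset J^A$ gives $t\notin J^\nu$, and part (ii) then yields $t\notin J^{\phi,\psi}$, i.e.\ $\Delta\phi_t(t,u)=\Delta\psi_t(t,u)=0$ for all $u\in\cU$. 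Hence $\gamma(t,u)=0$ on $(\R_{\ge 0}\setminus J^A)\times\cU$.

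The main obstacle is the backward direction of (ii): one must argue carefully that the almost-sure identity modulo $2\pi i\Z$ can be upgraded to an exact identity, and then propagated from $\supp(X_{t-})$ to all of $D$; this is where Condition~\ref{def:full_support} and the simple connectedness of $\cU$ are essential. The rest is routine, once part (i) is in hand and Lemma~\ref{lem:double_limit}(v) is invoked for (iii).
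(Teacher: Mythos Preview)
Your proof is correct and follows essentially the same route as the paper. Part (i) is identical; for part (ii) you argue the two inclusions by contrapositive while the paper argues them directly, but the underlying ideas---Fourier injectivity for one direction and the full-support condition for the other---are the same; part (iii) matches the paper's derivation from \eqref{eq:computeDeltaphipsi}.
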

\begin{proof}
By definition, $\nu(\{t\},d\xi)$ is the dual predictable projection of $\delta_{\Delta X_t}(d\xi)$ such that (by Proposition II 1.17 in \cite{JacodShiryaev})
\begin{align*}
 \int_{D} \left(e^{\scal u \xi} - 1\right) \nu(\omega; \{t\},d\xi) 
&= \cond{\left(e^{ \scal u {\Delta X_{t}}}  - 1\right)}{\cF_{t-}}.
\end{align*}
Combining with \eqref{eq:jump}, claim (i) follows. For (ii), let $t \in J^\nu$. Then, there exists an $u \in \cU$, such that the left hand side of \eqref{eq:jump2} is non-zero. Thus also the right hand side is non-zero and we conclude that either $\Delta \phi_t(t,u) \neq 0$ or $\Delta \psi_t(t,u) \neq 0$. It follows that $t \in J^{\phi,\psi}$ and hence that $J^\nu \subseteq J^{\phi,\psi}$. For the other direction let $t \in J^{\phi,\psi}$ and choose an $u \in \cU$ such that $\Delta \phi_t(t,u) \neq 0$ or $\Delta \psi_t(t,u) \neq 0$. Together with Condition~\ref{def:full_support} on $X$ we conclude that the right hand side of \eqref{eq:jump2} is non-zero with strictly positive probability. The same must hold for the left hand side and we conclude that $t \in J^\nu$ and hence that $J^\nu = J^{\phi,\psi}$.
For (iii) note that $\gamma$ has been defined in such a way that \eqref{eq:jump2} becomes \eqref{eq:char_nu_dis}. The jump equations \eqref{eq:Deltaphipsi} are a direct consequence of \eqref{eq:computeDeltaphipsi}. If $J^\nu \subset J^A$, then $\gamma(t,u) = 0$ whenever $t \not \in J^A$ and it follows that $\gamma$ is a good parameter.
\end{proof}

We now focus on the continuous parts of the semimartingale characteristics, and make the following definition: For any affine semimartingale $X$ with characteristics $(B,C,\nu)$ and for $(T,u) \in \Rplus \times \cU$ we define a complex-valued random measure on $[0,T]$ by
\begin{align}\label{eq:G}
G(dt,\omega,T,u) &:= \scal{\psi_t}{dB^c_t(\omega)} + \frac{1}{2}\scal{\psi_t}{dC_t(\omega)\psi_t} + \\
&+ \int_D \left(e^{\langle \psi_t, \xi \rangle} - 1 -\langle \psi_t,h(\xi) \rangle\right) \nu^c(\omega,dt,d\xi),\notag
\end{align}
where we write $\psi_t := \psi_t(T,u)$ for short.

\begin{lemma}\label{lem:characteristics}
Let $X$ be a quasi-regular affine semimartingale with a good version of its characteristics $(B,C,\nu)$, let $(T,u) \in (0,\infty) \times \cU$ and let $G(dt,\omega,T,u)$ be the complex-valued random measure defined in \eqref{eq:G}. It holds that 
\begin{equation}\label{eq:measure_identity}
G(dt;\omega,T,u) + d\phi^c_{t}(T,u) + \scal{X_t(\omega)}{d\psi^c_{t}(T,u)} = 0, \qquad\PP-a.s,
\end{equation}
as identity between measures on $[0,T]$.
\end{lemma}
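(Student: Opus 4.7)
The plan is to recognise that, by the affine property \eqref{cond:affine}, the process
\[M_t := \exp\bigl(\phi_t(T,u)+\scal{\psi_t(T,u)}{X_t}\bigr) = \cond{e^{\scal{u}{X_T}}}{\cF_t}\]
is a bounded $\C$-valued martingale on $[0,T]$. The strategy is to apply It\^o's formula, to extract the predictable finite-variation part of $M$, and to equate it to zero; since the continuous and the pure-jump components of a predictable FV process are uniquely determined, the continuous piece must vanish separately, and it will be shown to equal $M_{s-}$ times the left-hand side of \eqref{eq:measure_identity}.

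Concretely, I would set $Y_t := \phi_t(T,u)+\scal{\psi_t(T,u)}{X_t}$ so that $M = e^Y$. Integration by parts applied to $\scal{\psi_t}{X_t}$ -- using that $\psi_\cdot(T,u)$ is deterministic and of finite variation, hence $[\psi,X]_t = \sum_{s\le t}\scal{\Delta\psi_s}{\Delta X_s}$ -- combined with the canonical decomposition $X = X_0+B+X^c+h\ast(\mu^X-\nu)+(x-h(x))\ast\mu^X$ shows that the continuous martingale part of $Y$ equals $\int_0^\cdot\scal{\psi_{s-}}{dX^c_s}$, so $[Y^c,Y^c]_t=\int_0^t\psi_{s-}^\top dC_s\psi_{s-}$. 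It\^o's formula for the complex exponential then yields
\[M_t-M_0 = \int_0^t M_{s-}\,dY_s + \frac{1}{2}\int_0^tM_{s-}\,d[Y^c,Y^c]_s+\sum_{s\le t}M_{s-}\bigl(e^{\Delta Y_s}-1-\Delta Y_s\bigr).\]
Collecting the predictable FV contributions (the drifts $d\phi_s$, $\scal{X_{s-}}{d\psi_s}$ and $\scal{\psi_{s-}}{dB_s}$ inside $dY$, the quadratic-variation correction, and the predictable compensator of the jump-correction sum) and splitting each into its continuous and its pure-jump parts, the continuous portion of the predictable FV part of $M$ equals
\[\int_0^t M_{s-}\Bigl(d\phi_s^c + \scal{X_{s-}}{d\psi_s^c} + \scal{\psi_{s-}}{dB_s^c} + \tfrac12\,\psi_{s-}^\top dC_s\,\psi_{s-} + \int_D\!\bigl(e^{\scal{\psi_{s-}}{\xi}}-1-\scal{\psi_{s-}}{h(\xi)}\bigr)\nu^c(ds,d\xi)\Bigr).\]
The $\nu^c$-integral arises because at each $s\notin J^A$ the jumps of $\phi$ and $\psi$ are absent, so $\Delta Y_s = \scal{\psi_{s-}}{\Delta X_s}$, and then the continuous portion of the compensator of $\sum_{s\le t}M_{s-}(e^{\Delta Y_s}-1-\Delta Y_s)$ is exactly this expression after the usual L\'evy-It\^o cancellation involving $h\ast(\mu^X-\nu)$.

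Since $M$ is a martingale, its predictable FV part is identically zero, and by uniqueness of the continuous/pure-jump decomposition of a predictable FV process the continuous component vanishes separately. Dividing by the common factor $M_{s-}$ -- which is nonzero outside a $\PP$-null set, argued by localising to $\{|M_{s-}|\ge\varepsilon\}$ together with continuity of $u\mapsto M_\cdot(u)$ in $\cU$ and $M_\cdot(0)\equiv 1$ -- and noting that the continuous measures $dB^c,\,dC,\,\nu^c,\,d\psi^c$ charge neither the (countable, deterministic) jump set of $\psi$ nor the jump set of $X$ (so that $\psi_{s-}=\psi_s$ and $X_{s-}=X_s$ under these integrals), gives \eqref{eq:measure_identity}. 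The main obstacle I anticipate is the bookkeeping needed to decompose $\nu$ into $\nu^c$ plus its atoms and to show rigorously that only the former contributes to the continuous part of the compensator of $\sum M_{s-}(e^{\Delta Y_s}-1-\Delta Y_s)$, with the atoms and the jumps of $\phi,\psi$ entering only the pure-jump part (which, though not needed here, gives back \eqref{eq:Deltaphipsi}); a secondary technical point is the $M_{s-}$-cancellation, which is standard but deserves the brief localisation argument indicated above.
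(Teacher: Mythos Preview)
Your approach is essentially the paper's: apply It\^o's formula to the bounded martingale $M_t = \exp\bigl(\phi_t(T,u) + \scal{\psi_t(T,u)}{X_t}\bigr)$, conclude that its predictable finite-variation part $F$ vanishes, isolate the continuous piece, and divide out $M_{s-}$. The paper streamlines your two anticipated obstacles: rather than splitting $F$ into continuous and pure-jump parts and tracking only the former, it invokes Lemma~\ref{lem:affinejumps} together with \eqref{eq:B_jumps} to show that $\Delta F \equiv 0$, so $F = F^c$ outright; and no localisation is needed for the cancellation, since $M_{s-} = \exp\bigl(\phi_{s-} + \scal{\psi_{s-}}{X_{s-}}\bigr)$ is an exponential and hence nowhere zero.
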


\begin{proof}For $(T,u) \in (0,\infty) \times \cU$ consider the process
\[
M^{u,T}_{t}:=\E\Big[e^{ \langle u , X_{T} \rangle}\big|\cF_{t}\Big]
	=\exp\left(\phi_t\left(T,u\right)+\scal{\psi_t\left(T,u\right)}{X_{t}}\right)\quad t \in [0,T),
\]
which is a c\`adl\`ag martingale with the terminal value $M_{T}^{u,T} = \exp\left(\scal{u}{X_{T}}\right)$. To alleviate notation we consider $(T,u)$ fixed and write 
\[
M_{t} = M^{u,T}_{t} = \exp\left(\phi_{t}+\scal{\psi_{t}}{X_{t}}\right),
\]
with $\phi_{t} := \phi_t(T,u)$ and $\psi(t) := \psi_t(T,u)$.
Applying the It\^o-formula for semimartingales (cf. \cite[Prop.II.2.42]{JacodShiryaev}) to $M$ we obtain a decomposition 
\[M_t = L_t + F_t,\]
where $L$ is a local martingale and $F$ is the predictable finite variation process
\begin{align} 
F_t := &\int_0^t M_{s-} \left\{ d\phi^c_s + \scal{X_{s-}}{d\psi^c_s} + \scal{\psi_{s-}}{dB_s}  +\frac{1}{2}\scal{\psi_{s-}}{dC_s \psi_{s-}}  \right. \label{eq:prc-char}
\\
&+  \left.\int_D \left(e^{\Delta \phi_s + \langle \psi_s, X_{s-}+\xi\rangle - \langle \psi_{s-},X_{s-}\rangle } - 1  - \langle \psi_{s-},h(\xi) \rangle \right) \nu(\omega,ds,d\xi) \right\}.
\notag
\end{align}
The jump part $\Delta F$ vanishes due to Lemma \ref{lem:affinejumps} and \eqref{eq:B_jumps}, and we are left with the continuous part
\begin{align*}
F_t = F_t^c = &\int_0^t M_{s-} \left\{ d\phi^c_s + \scal{X_{s-}}{d\psi^c_s} + \scal{\psi_{s-}}{dB^c_s}  +\frac{1}{2}\scal{\psi_{s-}}{dC_s \psi_{s-}}  \right. \label{eq:prc-char}
\\
&+  \left.\int_D \left(e^{ \langle \psi_{s-},\xi\rangle } - 1  - \langle \psi_{s-},h(\xi) \rangle \right) \nu^c(\omega,ds,d\xi) \right\}.
\notag
\end{align*}
Recall that $M$ is a martingale, and hence $M \equiv L$ and $F \equiv 0$ on $[0,T]$, $\PP$-a.s. With \eqref{eq:G},  $F$ can be rewritten as
\[F_t = \int_0^t M_{s-} \left\{ d\phi^c_{s} + \scal{X_{s-}}{d\psi^c_{s}} + G(ds;\omega,T,u) \right\}.\]
Since none of the measures appearing above charges points, the left limits $X_{s-}, \psi_{s-}$ can be substituted by right limits $X_s, \psi_s$. Moreover, $M_{s-}$ is nonzero everywhere and \eqref{eq:measure_identity} follows.
\end{proof}

In order to make efficient use of the support condition~\ref{def:full_support}, we introduce the following convention: Given an affine semimartingale $X$, a tuple $\mathbf{X} = (X^0, \dotsc, X^d)$ represents $d+1$ \emph{stochastically independent} copies of $X$. Formally, the tuple $\mathbf{X}$ can be realized %as the canonical process 
on the product space $(\Omega^{(d+1)}, \cF^{\otimes (d+1)}, (\cF_t^{\otimes (d+1)})_{t \geq 0})$ equipped with the associated product measure. 
Moreover, for any points $\xi_0, \dotsc, \xi_d$ in $\R^d$, we define the $(d+1) \times (d+1)$-matrix
\begin{equation}\label{eq:H_def}
H(\xi_0, \dotsc, \xi_n) := \begin{pmatrix} 1 & \xi_0^\top \\ \vdots & \vdots \\ 1 & \xi_n^\top \end{pmatrix}.
\end{equation}
The matrix-valued process $\Theta_t$ is formed by inserting $\mathbf{X} = (X^0, \dotsc, X^d)$ into $H$, i.e. we set 
\begin{equation}\label{eq:theta_def}
\Theta_t(\omega) = H(X^0, \dotsc, X^d) 
= \begin{pmatrix} 1 & X_t^0(\omega)^\top \\  \vdots & \vdots \\ 1 & X_t^d(\omega)^\top \end{pmatrix}.
\end{equation}

\begin{lemma}\label{lem:inversion}
Let $s > 0$ and let $X$ be an affine semimartingale satisfying the support condition~\ref{def:full_support}. Then there exists $\epsilon > 0$ and a set $E \in \cF_s$with $\P\brac{E}>0$, such that the matrices $\Theta_t(\omega)$ and $\Theta_{t-}(\omega)$; are regular for all $(t,\omega) \in (s,s+\epsilon) \times E$. 
\end{lemma}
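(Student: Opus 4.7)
The strategy is to exploit the support condition~\ref{def:full_support} to build a positive-probability $\cF_s$-event on which $\Theta_s$ is regular, and then to propagate this regularity to $\Theta_t$ and $\Theta_{t-}$ for $t$ in a small right neighborhood of $s$ using the c\`adl\`ag property of the copies $X^0, \dotsc, X^d$. The main obstacle will be turning a pointwise-in-$\omega$ right-continuity statement into a single deterministic $\epsilon$ on a subset of positive probability.

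First I would produce a regular configuration of $\Theta_s$. Since $\conv(\supp(X_s)) = D$ spans $\R^d$, the set $\supp(X_s)$ cannot lie in any affine hyperplane, so there exist $d+1$ affinely independent points $\xi_0, \dotsc, \xi_d \in \supp(X_s)$, i.e.\ $\det H(\xi_0, \dotsc, \xi_d) \neq 0$. By continuity of the determinant one can choose open balls $U_i \ni \xi_i$ such that $H(y_0, \dotsc, y_d)$ is regular for every $(y_0, \dotsc, y_d) \in U_0 \times \dotsb \times U_d$. The event
\[
F := \set{X^i_s \in U_i \text{ for all } i = 0, \dotsc, d}
\]
lies in $\cF_s$ (on the product space) and, by independence of the copies together with $\xi_i \in \supp(X_s)$, satisfies $\P(F) = \prod_i \P(X_s \in U_i) > 0$; on $F$ the matrix $\Theta_s$ is regular.

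Next I would extend regularity to a right neighborhood of $s$ pointwise in $\omega$. Fix $\omega \in F$. Right-continuity of each $X^i$ at $s$ gives $X^i_t(\omega) \to X^i_s(\omega)$ as $t \downarrow s$, and the same limit is inherited by the left limit $X^i_{t-}(\omega)$: for any sequence $t_n \downarrow s$ one can pick $v_n \in (s, t_n)$ with $|X^i_{v_n}(\omega) - X^i_{t_n-}(\omega)| < 1/n$, and then $v_n \downarrow s$ together with right-continuity at $s$ gives $X^i_{v_n}(\omega) \to X^i_s(\omega)$. Consequently there exists $\epsilon(\omega) > 0$ such that $X^i_t(\omega), X^i_{t-}(\omega) \in U_i$ for every $t \in (s, s+\epsilon(\omega))$ and every $i$, so that $\Theta_t(\omega)$ and $\Theta_{t-}(\omega)$ remain regular on this interval.

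The final step, which I expect to be the main obstacle, is to replace the random $\epsilon(\omega)$ by a deterministic one. I would set
\[
E_n := F \cap \set{\omega : \Theta_t(\omega) \text{ and } \Theta_{t-}(\omega) \text{ are regular for every } t \in (s, s + 1/n]},
\]
where the uncountable intersection can be reduced to a countable one via the c\`adl\`ag property of $\det \Theta$, so $E_n$ is measurable. The previous paragraph shows $E_n \uparrow F$, hence by continuity of measure $\P(E_n) \uparrow \P(F) > 0$, so $\P(E_n) > 0$ for all sufficiently large $n$. Taking $E := E_n$ and $\epsilon := 1/n$ proves the lemma. Strictly speaking, this $E$ is only $\cF_{s+\epsilon}$-measurable, but it is contained in the $\cF_s$-event $F$, which is the essential input of the argument; the statement $E \in \cF_s$ should be read as ``$E$ measurable, built on top of an $\cF_s$-event of positive probability''.
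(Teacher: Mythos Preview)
Your argument is correct and follows essentially the same route as the paper: the paper also finds $d+1$ affinely independent points in $\supp(X_s)$, thickens them to balls on which $H$ stays regular, uses independence of the copies to get $\P(\Theta_s \text{ regular}) > 0$, and then passes to a deterministic $\epsilon$ by a monotone-limit argument. The only cosmetic difference is packaging: instead of your pointwise $\epsilon(\omega)$ and the sets $E_n$, the paper introduces the hitting time $\tau = \inf\{t > s : \Theta_t \text{ or } \Theta_{t-} \text{ singular}\}$, observes $\{\Theta_s \text{ regular}\} \subseteq \{\tau > s\}$ by right-continuity, and then takes $E = \{\tau \ge s + 1/N\}$ for large $N$.

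Your closing caveat about $\cF_s$-measurability is well taken and in fact applies equally to the paper's own proof: their set $E = \{\tau \ge s + 1/N\}$ is likewise only $\cF_{s+1/N}$-measurable in general. This is harmless, since the lemma is invoked only in the proof of Lemma~\ref{lem:local}, where all that is used is $E \in \cF$ with $\P(E) > 0$; the claim $E \in \cF_s$ in the statement is an overclaim that plays no role downstream.
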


\begin{proof}
Define the first hitting time
\[\tau :=  \inf \set{t > s: \Theta_t \text{ singular, or } \Theta_{t-} \text{ singular}}.\]
Since the set of singular matrices is a closed subset of the vector space of $\RR^{(d+1) \times (d+1)}$-matrices, $\tau$ is a stopping time, cf. \cite[Thm.~1.4]{Protter}. Moreover, by monotone convergence, we have
\[\lim_{n \to \infty} \P\Big(\Theta_t \text{ and } \Theta_{t-} \text{ regular for all $t \in (s,s+1/n)$}\Big) = \lim_{n \to \infty}\P (\tau \ge s+1/n) = \P(\tau > s).\] 
If we can show that $\P(\tau > s) > 0$, then the claim follows by choosing $N$ large enough and setting $\epsilon = 1/N$ and $E = \set{\tau \ge s+1/N}$. But by right-continuity of $X$, the set $\set{\omega:\tau\brac{\omega} > s}$ is equal to $\set{\omega:\Theta_s(\omega) \text{ is regular}}$ and it remains to show that $\Theta_s$ is regular with strictly positive probability. By Condition~\ref{def:full_support} it holds that $\conv(\supp(X_s)) = D$ and we can find $d+1$ convex independent points\footnote{A set of points is called \emph{convex independent} if none of them can be expressed as a convex combination of the remaining points.} $\xi^0, \dotsc, \xi^d$ in $\supp(X_s)$. Recalling the definition of $H$ in \eqref{eq:H_def}, it follows that $H(\xi^0, \dotsc, \xi^d)$ is regular. Since the set of regular matrices is open we find $\delta > 0$ such that even $H(y_0, \dotsc, y_d)$ is regular for all $y_i \in U_{\delta}(\xi_i), i \in \set{0, \dotsc, d}$, where $U_{\delta}(\xi_i)$ is the open ball of radius $\delta$ centered at $\xi_i$. Now, by independence of $X^0, \dots, X^d$, it follows that 
 \begin{align*}
 \PP \left(\Theta_{s} \; \text{is regular}\right) & \ge \PP \Big(X_s^i \in U_\delta(\xi_i) \quad \forall\, i \in \set{0, \dotsc, d}\Big)\\
 &= \prod_{i=0}^d \PP\left(X_s \in U_\delta(\xi_i)\right). 
 \end{align*}
Since for each $i \in \set{0, \dotsc, d}$ the intersection of $U_\delta(\xi_i)$ with the support of $X_s$ is non-empty, all probabilities are strictly positive, and the proof is complete. 
 \end{proof}

Similar to the $\RR^{(d+1) \times (d+1)}$-valued process process $(\Theta_t)_{t \ge 0}$ defined in \eqref{eq:theta_def}, we define $d+1$ independent copies of  the complex-valued random measure $G(dt,\omega,T,u)$  from equation \eqref{eq:G} and denote them by $G_0,\dots,G_d$, respectively. With this notation and for any $(T,u) \in \Rplus \times \cU$, the $d+1$ corresponding equations \eqref{eq:measure_identity} can be written in matrix-vector form as 
\begin{equation}\label{eq:phi_to_G}
\Theta_{t}(\omega) \cdot \begin{pmatrix} d\phi^c_t(T,u) \\ d\psi^{c,1}_{t}(T,u) \\ \vdots \\ d\psi^{c,d}_{t}(T,u) \end{pmatrix}
 =  - \begin{pmatrix} G_{0}(dt;\omega,T,u) \\ \vdots \\ G_{d}(dt;\omega,T,u)  \end{pmatrix}\end{equation}
which holds $\PP$-a.s. as an identity between complex-valued measures on $[0,T]$. The next Lemma gives a 'local' version of the continuous part of Theorem~\ref{prop1}.

\begin{lemma}\label{lem:local}
Let $X$ be a quasi-regular affine semimartingale satisfying the support condition~\ref{def:full_support} and let $\tau \in (0,\infty)$ be a deterministic timepoint. Then there exists an interval $I_\tau := (\tau, \tau+ \epsilon)$, where $\epsilon = \epsilon(\tau) > 0$, and good parameters $(A^c, \beta, \alpha, \mu)$ on $I_\tau$. With respect to these parameters, and with $F$ and $R$ as in \eqref{eq:FR}, the measure Riccati equations \eqref{ric1} and \eqref{ric2} hold true for each $(T,u) \in \Rplus \times \cU$ and $t \in I_\tau \cap [0,T]$.
\end{lemma}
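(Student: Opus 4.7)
The plan is to combine Lemma~\ref{lem:inversion} with the measure identity of Lemma~\ref{lem:characteristics} evaluated on $d+1$ independent copies of $X$. First I would apply Lemma~\ref{lem:inversion} at $s = \tau$ to obtain $\epsilon = \epsilon(\tau) > 0$ and an event $E$ of positive probability on which both $\Theta_t(\omega)$ and $\Theta_{t-}(\omega)$ are regular for every $t \in I_\tau := (\tau, \tau + \epsilon)$. This fixes the candidate interval.

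Next, for every $(T, u) \in \Rplus \times \cU$ with $T \geq \tau + \epsilon$, stacking Lemma~\ref{lem:characteristics} over the $d+1$ copies gives exactly the matrix equation \eqref{eq:phi_to_G}. Restricting to $E \cap (I_\tau \cap [0, T])$ I invert $\Theta_t$ to obtain
\[
\begin{pmatrix} d\phi^c_t(T, u) \\ d\psi^c_t(T, u) \end{pmatrix}
= -\Theta_t(\omega)^{-1}\begin{pmatrix} G_0(dt; \omega, T, u) \\ \vdots \\ G_d(dt; \omega, T, u) \end{pmatrix}.
\]
Since the left-hand side is deterministic I fix one outcome $\omega^\ast \in E$ and work with the realized continuous characteristics $B^{c,i}(\omega^\ast)$, $C^i(\omega^\ast)$, and $\nu^{c,i}(\omega^\ast, \cdot, K_n)$ for a sequence of compacts $K_n \uparrow D \setminus \set{0}$. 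Take $A^c$ to be any continuous non-decreasing function on $I_\tau$ dominating the total variation of all of these $\sigma$-finite measures (e.g., the sum of these total variations, which is finite on $\overline{I_\tau}$ after possibly shrinking $\epsilon$). Radon-Nikodym then yields Borel densities such that $dB^{c,i}(\omega^\ast) = b^i_t\, dA^c_t$, $dC^i(\omega^\ast) = c^i_t\, dA^c_t$, and $\nu^{c,i}(\omega^\ast, dt, d\xi) = K^i_t(d\xi)\, dA^c_t$.

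I would then define $(\beta_i, \alpha_i, \mu_i)$ for $i = 0, \dots, d$ by applying $-\Theta_t(\omega^\ast)^{-1}$ row-wise to the column of triples $((b^j_t, c^j_t, K^j_t))_{j=0}^d$, so that $\beta_i(t) = -\sum_{j=0}^d [\Theta_t(\omega^\ast)^{-1}]_{ij}\, b^j_t$ and analogously for $\alpha_i$ and $\mu_i$. With this choice each $G_j(dt; \omega^\ast, T, u)$ takes the L\'evy-Khintchine form $g^j_t(\psi_t(T, u))\, dA^c_t$, and the linear recombination of these ingredients with weights $[\Theta_t(\omega^\ast)^{-1}]_{ij}$ turns the right-hand side of the displayed equation into precisely $-F(t, \psi_t(T,u))\, dA^c_t$ in the $0$-th component and $-R(t, \psi_t(T,u))\, dA^c_t$ in components $1, \dots, d$, with $F, R$ as in \eqref{eq:FR}. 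This is exactly \eqref{ric1}--\eqref{ric2}; the case $T \in I_\tau$ is obtained by restricting $t$ to $I_\tau \cap [0, T]$.

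The main obstacle I expect is the verification of Definition~\ref{def:good-parameter}(i)--(ii) for the recombined parameters. The $\mu_i$ are \emph{signed} kernels built from positive ones with the $\omega^\ast$-dependent, $t$-varying coefficients $[\Theta_t(\omega^\ast)^{-1}]_{ij}$, so local $A^c$-integrability of $\mu_i(\cdot, K)$ for compact $K$ and of $\beta_i, \alpha_i$ reduces to boundedness of $t \mapsto \Theta_t(\omega^\ast)^{-1}$ on compact subsets of $I_\tau$. This boundedness follows because $t \mapsto \Theta_t(\omega^\ast)$ is c\`adl\`ag with both $\Theta_t$ and $\Theta_{t-}$ regular on $I_\tau$, so $|\det \Theta_t(\omega^\ast)|$ attains a positive infimum on any compact $[\tau, T'] \subset I_\tau$; after possibly shrinking $\epsilon$ further this gives the required uniform bound and hence the good-parameter conditions.
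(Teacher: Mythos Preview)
Your approach is essentially the same as the paper's: invoke Lemma~\ref{lem:inversion}, stack the measure identity of Lemma~\ref{lem:characteristics} over $d+1$ independent copies into \eqref{eq:phi_to_G}, invert $\Theta_t$ on the good event, freeze a single $\omega^\ast\in E$, and read off $(\beta,\alpha,\mu)$ as the $\Theta_t(\omega^\ast)^{-1}$-recombination of the disintegrated characteristics. The paper obtains $A^c$ slightly differently---it uses the Jacod--Shiryaev disintegration (Prop.~II.2.9) to produce a common predictable $\cA^c$ for all copies and then sets $A^c_t:=\cA^c_t(\omega^\ast)$---but your ad hoc dominating function is an equally valid route.

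One bookkeeping slip: the parameters should be $(\beta_i)_i := +\,\Theta_t(\omega^\ast)^{-1}(b^j)_j$, not with a minus sign. With your convention $\beta_i=-\sum_j[\Theta^{-1}]_{ij}b^j$ the L\'evy--Khintchine form built from $(\beta_0,\alpha_0,\mu_0)$ equals $-\sum_j[\Theta^{-1}]_{0j}g^j$, so the $0$-th row of $-\Theta^{-1}(G_j)_j$ becomes $+F(t,\psi_t)\,dA^c_t$ rather than $-F(t,\psi_t)\,dA^c_t$, and \eqref{ric1}--\eqref{ric2} come out with the wrong sign. Dropping the extra minus fixes this; it is not a structural issue.
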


\begin{remark}
We emphasize that in this lemma the parameters $(A^c, \beta, \alpha, \mu)$ as well as the functions $F$ and $R$ may depend on $\tau$. 
\end{remark}

For a semimartingale $X$ there exists a c\`adl\`ag,  increasing, predictable, $\Rplus$-valued  process $\cA$ starting in $0$ and with continuous part $\cA^c$, such that the semimartingale characteristics of $X$ can be `disintegrated' with respect to $\cA$. For the continuous parts $(B^c,C,\nu^c)$ of the characteristics, this implies the representation
\begin{align}
B^c_t &= \int_0^t b_s d\cA^c_s\notag\\
C_t &= \int_0^t c_s d\cA^c_s \label{eq:char_disintegration}\\
\nu^c(\omega,dt,dx) &= K_{\omega,t}(dx) d\cA^c_t(\omega) \notag,
\end{align}
where $b$ and $c$ are predictable processes and $K_{\omega,t}(dx)$ a transition kernel from $\Omega \times \Rplus$, endowed with the predictable $\sigma$-algebra, to ($\RR^d, \cB(\RR^d))$; see \cite[Prop.~II.2.9]{JacodShiryaev} for further details. 

\begin{proof}
Let $X^0,\dots,X^d$ be $d+1$ stochastically independent copies of $X$. Denote the semimartingale characteristics of  $X^{i}$ by $(B^i,C^i,\nu^i)$ and define $G_{i}(\omega;t,T,u)$ as in \eqref{eq:G}, $i=0,\dots,d$. The semimartingale characteristics $(B^i,C^i,\nu^i)$ can be disintegrated as in \eqref{eq:char_disintegration}. Since we consider only a finite collection of semimartingales, we may assume that the process $\cA^c_s(\omega)$ is the same for each $X^{i}$. 

By Lemma~\ref{lem:inversion}, there exists an interval $I_\tau=(\tau, \tau + \epsilon)$, $\epsilon>0$, and a set $E \in \cF$ with $\PP(E) > 0$ and such that $\Theta_{t}(\omega)$ is invertible for all $(t,\omega) \in I_\tau \times E$. Multiplying \eqref{eq:phi_to_G} from the left with the inverse of this matrix yields
\begin{equation}\label{eq:theta_inverse}
\begin{pmatrix}
d\phi^c_{t}(T,u) \\ 
d\psi^{c,1}_{t}(T,u) \\
\vdots \\
d\psi^{c,d}_{t}(T,u)
\end{pmatrix}
 =  - \Theta_{t}(\omega)^{-1} \cdot 
\begin{pmatrix} 
G_{0}(dt;\omega,T,u) \\
\vdots \\
G_{d}(dt;\omega,T,u)
\end{pmatrix},
\end{equation}
as an identity between complex-valued measures on $I_\tau$ for all $\omega \in E$. 
Since $\PP(E) > 0$, we can choose some particular $\omega_* \in E$ where \eqref{eq:theta_inverse} holds. Setting 
\[A^c_t := \cA^c_t(\omega_*), \qquad t \in I_\tau\]
 we observe that $G_i(dt;\omega_*,T,u) \ll dA^c_t$ for each $i \in \set{0, \dotsc, d}$ and conclude that also the left hand side of \eqref{eq:theta_inverse} is absolutely continuous with respect to $A^c$ on $I_\tau$. Denote by $(b^i,c^i,K^i)$ the disintegrated semi-martingale characteristics of $X^i$, as in \eqref{eq:char_disintegration}. Note that the random measures $G_i(dt;\omega,T,u)$ depend linearly on $(b^i, c^i, K^i)$, which in light of \eqref{eq:theta_inverse}  suggests to apply the linear transformation $\Theta_{t}(\omega)^{-1}$ directly to the disintegrated semimartingale characteristics. Evaluating at $\omega_*$, we hence define the \emph{deterministic} functions $(\beta^i, \alpha^i, \mu^i)_{i \in \set{0, \dotsc, d}}$ on $I_\tau$ by setting
\begin{align*}
\left(\beta^0 , \beta^1 , \dotsc , \beta^d \right)_t^\top &:= \Theta_{t-}(\omega_*)^{-1} \cdot \left(b^0 , b^1 , \dotsc , b^d \right)_t^\top (\omega_*)\\
\left(\alpha^0_{kl} , \alpha^1_{kl} , \dotsc , \alpha^d_{kl} \right)_t^\top
&:= \Theta_{t-}(\omega_*)^{-1} \cdot \left(c^0_{kl}, c^1_{kl} , \dotsc , c^d_{kl}  \right)_t^\top(\omega_*), \quad k,l \in \set{1, \dotsc, d}\\
\left( \mu^0 , \mu^1 , \dotsc , \mu^d \right)_t^\top
&:= \Theta_{t-}(\omega_*)^{-1} \cdot \left(K^0 , K^1 , \dotsc , K^d  \right)_t^\top(\omega_*).
\end{align*}
Using these parameters, the functions $F,R$ can be defined on $I_\tau$ as in \eqref{eq:FR}. In combination with \eqref{eq:theta_inverse} it follows that 
\begin{equation}\label{eq:psi_FR}
\begin{pmatrix}
d\phi^c_{t}(T,u) \\ 
d\psi^{c,1}_{t}(T,u) \\ 
\vdots \\ 
d\psi^{c,d}_{t}(T,u) 
\end{pmatrix}
=  - \Theta_{t}(\omega_*)^{-1} \cdot 
\begin{pmatrix} 
G_0(dt;\omega_*,T,u) \\ 
\vdots \\ 
G_d(dt;\omega_*,T,u)  
\end{pmatrix}
= - 
\begin{pmatrix} 
F(t,\psi_{t}( T,u)) \\ 
R^1(t,\psi_{t}( T,u)) \\ 
\vdots \\ 
R^d(t,\psi_{t}( T,u))  
\end{pmatrix} dA^c_t
\end{equation}
for $t \in I_\tau \cap [0,T]$, which yields validity of the Riccati equations \eqref{ric1} and \eqref{ric2} on $I_\tau$.
\end{proof}

\begin{proof}[Proof of Thm.~\ref{prop1}]We consider first the continuous parts of the Riccati equations, and thereafter treat their jumps. Applying Lemma~\ref{lem:local} to each $\tau \in (0,\infty)$ we obtain a family of intervals $I_\tau$, each with non-empty interior $I_\tau^\circ$, such that $(I_\tau^\circ)_{\tau \in (0,\infty)}$ is an open cover of the positive half-line $(0,\infty)$. Since $\Rplus$ can be exhausted by compact sets such a cover has a countable subcover $\cS$. To each interval $I \in \cS$, Lemma~\ref{lem:local} associates good parameters $(A^{c,I}, \beta^{I}, \alpha^{I}, \nu^{I})$. By countability of $\cS$ there exists a continuous common dominating function $A^c: \Rplus \to \Rplus$ such that $A^{c,I} \ll A^c$ for all $I \in \cS$. As discussed in Remark~\ref{rem:parameter_Set}, passing from $A^{c,I}$ to $A^c$ has merely the effect of multiplying all parameters with the Radon-Nikodym derivative $\frac{dA^{c,I}}{dA^c}$. Hence, we may assume without loss of generality that $A^{c,I} = A^c$ for each $I \in\cS$. 

Let now $I$ and $\tilde I$ be two intervals with non-empty intersection, taken from the countable subcover $\cS$. Denote by $(A^c, \beta, \alpha, \mu)$ and $(A^c, \tilde \beta, \tilde \alpha, \tilde \mu)$ the respective parameter sets obtained for these intervals by application of Lemma~\ref{lem:local} and by $(F,R)$ and $(\tilde F, \tilde R)$ the corresponding functions defined by \eqref{eq:FR}. We say that these two parameter sets are \emph{compatible} if they agree (up to a $dA^c_t$-nullset) on the intersection $I \cap \tilde I$. Once we have shown compatibility for arbitrary intervals $I$ and $\tilde I$ it is clear that we can find a single good parameter set 
$(A, \beta, \alpha, \mu)$, defined on the whole real half-line $\Rplus$, such that the Riccati equations \eqref{ric1} and \eqref{ric2} hold true. To condense notation, we introduce the vectors
\[d\Psi^c_{t}(T,u) := \begin{pmatrix} d\phi^c_{t}(T,u) \\ d\psi^{c,1}_{t}(T,u) \\ \vdots \\ d\psi^{c,d}_{t}(T,u) \end{pmatrix}, \quad \cR(t,u) := 
\begin{pmatrix} F(t,u) \\ R^1(t,u) \\ \vdots \\ R^d(t,u)  \end{pmatrix}, \quad \quad \tilde \cR(t,u) := 
\begin{pmatrix} \tilde F(t,u) \\ \tilde R^1(t,u) \\ \vdots \\ \tilde R^d(t,u)  \end{pmatrix}.\]
Applying equation \eqref{eq:psi_FR} once on the interval $I$ and once on $\tilde I$ yields
\begin{equation}\label{eq:RR}
\cR(t,\psi_{t}(T,u)) dA^c_t = d\Psi^c_{t}(T,u) = \tilde \cR(t,\psi_{t}(T,u)) dA^c_t, \quad t \in I \cap \tilde I \cap [0,T].
\end{equation}
Let now $\cT \times \cE$ be a countable dense subset of $\Rplus \times \cU$.  Taking the union over the countable set $\cT \times \cE$ we obtain from \eqref{eq:RR} that
\begin{equation}\label{eq:RR2}
\cR(t,\psi_{t}(T,u)) = \tilde \cR(t,\psi_{t}(T,u)) \quad \text{for all } (T,u) \in \cT \times \cE \text{ and } t \in (I \cap \tilde I \cap [0,T]) \setminus N,
\end{equation}
where $N$ is a $dA^c_t$-nullset, independent of $(T,u)$. 

The next step is to `evaluate' \eqref{eq:RR2} at $T = t$ and to use that $\psi_{t}( t, u) = u$ by taking limits in the countable set $\cT$. Observe  that as functions of L\'evy-Khintchine-form (cf. \eqref{eq:FR}) both $F$ and $R$ are continuous in $u$. 
By denseness of $\cT$ in $\Rplus$ we can find a sequence $(T_n) \subseteq \cT$ such that $T_n \downarrow t$ as $n \to \infty$. 

Together with the right-continuity of $\psi_t(T,u)$ in $T$ this yields
\begin{equation}\label{eq:RR4}
\cR(t,u) = \lim_{n \to \infty } \cR(t,\psi_{t}(T_n,u))  = \lim_{n \to \infty }\tilde \cR(t,\psi_{t}(T_n,u)) = \tilde \cR(t,u), 
\end{equation}
for all $u \in \cE$.  Using continuity of $F$ and $R$  in $u$, Equation \eqref{eq:RR4} can be extended from the dense subset $\cE$ to all of $\cU$. It is well-known that a function of L\'evy-Khintchine-form determines its parameter triplet uniquely, cf. \cite[Thm.~8.1]{Sato}. Hence, we may conclude that 
\[\beta^i_t = \tilde \beta^i_t, \quad \alpha^i_t = \tilde \alpha^i_t, \quad \mu^i_t = \tilde \mu^i_t,\]
for each $i \in \set{0, \dotsc, d}$ and $t \in I \cap \tilde I$ with exception of the $dA^c_t$-nullset $N$. This is the desired compatibility property and shows the existence of good parameters $(A^c, \beta, \alpha, \nu)$. 

We now turn to the continuous parts of the semimartingale characteristics $(B,C,\nu)$ and show \eqref{eq:char_B}, \eqref{eq:char_C} and \eqref{eq:char_nu}. To this end, fix $(T,u) \in \Rplus \times \cU$ and let $(b,c,K)$ be the continuous semimartingale characteristics of $X$, disintegrated with respect to the increasing predictable process $\cA^c_t(\omega)$, as in \eqref{eq:char_disintegration}. For each $\omega \in \Omega$, write 
\[\cA^c_t(\omega) = \int_0^t a_s(\omega) dA^c_t + \cS_t(\omega)\]
for the Lebesgue decomposition of $\cA^c_t(\omega)$  with respect to $A^c_t$.\footnote{Note that our argument does not require measurability of $\omega \mapsto a_s(\omega)$ or $\omega \mapsto \cS_t(\omega)$.} Furthermore, define
\begin{align}
g(\omega,t,T,u) &:= \scal{\psi_{t}}{b_t(\omega)} + \frac{1}{2}\scal{\psi_{t}}{c_t(\omega)\psi_t} + \\
&+ \int_D \left(e^{\scal{\psi_{t}}{\xi}} - 1 - \scal{\psi_{t}}{h(\xi)} \right) K_t(\omega,d\xi),\notag
\end{align}
which can be considered as the disintegrated analogue of \eqref{eq:G}. Combining \eqref{eq:phi_to_G} with the Riccati equations, we obtain that
\begin{equation}
\Theta_{t}(\omega;x) \cdot \cR(t, \psi_{t}(T,u)) dA^c_t = g(\omega,t,u,T) a_t(\omega) dA^c_t + g(\omega,t,u,t) d\cS_t(\omega)
\end{equation}
for all $(T,u) \in \Rplus \times \cU$ and $t \in [0,T]$. By the uniqueness of the Lebesgue decomposition we conclude that
\begin{equation}\label{eq:small_g}
\begin{cases} a_t(\omega)g(\omega,t,T,u)  = \Theta_{t}(\omega) \cdot \cR(t, \psi_{t}(T,u)), &\qquad dA^c_t-a.e\\
\phantom{a_t(\omega)}g(\omega,t,T,u) = 0, &\qquad d\cS_t(\omega)- a.e.
\end{cases} 
\end{equation}
As in the first part of the proof, we consider a countable dense subset $\cT \times \cE$ of $\Rplus \times \cU$. Taking the union over all $(T,u)$ in $\cT \times \cE$ and repeating the density arguments of \eqref{eq:RR4} we find an $dA^c_t$-nullset $N_1$ and a $d\cS_t(\omega)$-nullset $N_2$, such that
\begin{equation}
\begin{cases} a_t(\omega)g(\omega,t,t,u)  = \Theta_{t}(\omega) \cdot \cR(t, u), &\quad \text{for all } t \in \Rplus \setminus N_1, u \in \cE\\
\phantom{a_t(\omega)}g(\omega,t,t,u) = 0, &\quad \text{for all } t \in \Rplus \setminus N_2, u \in \cE.
\end{cases} 
\end{equation}
As functions of $u$, both sides are of L\'evy-Khintchine-form. In addition, $\cE$ is dense in $\cU$, which allows us to conclude from the first equation that
\begin{align*}
a_t(\omega)b_t(\omega)  &= \Theta_{t}(\omega) \cdot (\beta^0_t, \dotsc, \beta^d_t)\\
a_t(\omega)c_t(\omega)  &= \Theta_{t}(\omega) \cdot (\alpha^0_t, \dotsc, \alpha^d_t)\\
a_t(\omega) K_t(\omega) &= \Theta_{t}(\omega) \cdot (\mu^{c,0}_t, \dotsc, \mu^{c,d}_t)
\end{align*}
for all $t \in \Rplus \setminus N_1$ and from the second equation that
\[b_t(\omega)= 0, \quad c_t(\omega) = 0, \quad K_t(\omega) = 0, \qquad d\cS_t(\omega)-a.e.\]
Integrating with respect to $\cA^c_t(\omega)$ and adding up yields\eqref{char:affine}.\\

To conclude the proof, we finally turn to the discontinuous part. Note that Lemma~\ref{lem:affinejumps} already provides us with parameters $\gamma$, a set $J^\nu$ and the validity of \eqref{eq:char_nu} and \eqref{eq:Deltaphipsi}. Taking the continuous increasing function $A^c$ from the first part of the proof and inserting jumps of strictly positive hight at each time $t \in J^\nu$ we obtain an increasing function $A$ with continuous part $A^c$ and jump set $J^A = J^\nu$. Note that the heights of the jumps are arbitrary; for example the values of the summable series $(2^{-n})_{n \in \NN}$ can be taken. Together, $(A, \gamma, \alpha, \beta, \mu)$ is now a good parameter set in the sense of Definition~\ref{def:good-parameter} and all parts of Theorem~\ref{prop1} have been shown.
\end{proof}

\section{Affine Markov processes and infinite divisibility}

Let $X$ be a Markov process in $D$ (possibly non-conservative) with transition kernels $p_{s,t}(x,B)$, defined for all $0 \le s \le t$, $x \in D$ and $B \in \cB(D)$. The following definition is analogous to \cite[Def.~2.1]{DuffieFilipovicSchachermayer}. 
\begin{definition}\label{def:affine_markov}
A Markov process $X$ in $D$ is called \emph{affine Markov process}, if there exist $\C$- and $\C^d$-valued functions $\phi, \psi$, such that the transition kernels of $X$ satisfy
\begin{equation}\label{eq:Markov}
\int_D e^{\scal{u}{\xi}} p_{s,t}\brac{x,d\xi}=e^{\phi_s\brac{t,u}+\scal{\psi_s\brac{t,u}}{x}},
\end{equation}
for all $0 \le s \le t$, $(x,u) \in D \times \cU$. 
\end{definition}
Under mild conditions, affine semimartingales are also affine Markov processes. First, note that to every affine semimartingale we can associate transition kernels $p_{s,t}(x,B)$, defined for all $0 \le s \le t$, $B \in \cB(D)$ and $x \in \supp(X_s)$, by considering the regular conditional distributions
\begin{equation}\label{eq:reg_cond}
\PP\left(\left.X_t \in B \right| X_s\right) = p_{s,t}(X_s,B).
\end{equation}
By \eqref{cond:affine}, the kernels will satisfy \eqref{eq:Markov} for all $x \in \supp(X_s)$ and the semi-flow-equations \eqref{eq:flow} provide the Chapman-Kolmogorov equations for the kernels $p_{s,t}(x,.)$. It remains to show that the family of transition kernels and the validity of \eqref{eq:Markov} can be extended from $\supp(X_s)$ to $D$. Apart from the trivial condition $\supp(X_s) = D$ for all $s > 0$, we can give the following sufficient condition:

\begin{definition}An affine semimartingale $X$ is called infinitely divisible, if the regular conditional distributions $p_{s,t}(X_s,.)$ are infinitely divisible probability measures on $D$, $\PP$-a.s. for any $0 \le s \le t$. 
\end{definition}

\begin{lemma}\label{lem:markov}
Let $X$ be a quasi-regular affine semimartingale satisfying the support condition~\ref{def:full_support}. Suppose that 
\begin{enumerate}[(i)]
\item $\supp(X_t) = D$ for all $t > 0$, or
\item $X$ is infinitely divisible.
\end{enumerate}
Then $X$ can be realized as a conservative affine Markov process with state space $D$.
\end{lemma}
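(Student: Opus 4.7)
The plan is to construct transition kernels $p_{s,t}(x,\cdot)$ on all of $D$ for every $0 \le s \le t$ and check that they satisfy Definition~\ref{def:affine_markov} together with the Chapman--Kolmogorov relation. As the text already observes, the regular conditional probability in \eqref{eq:reg_cond} provides $p_{s,t}(x,\cdot)$ for $x \in \supp(X_s)$, and the semi-flow identity \eqref{eq:flow} translates directly into Chapman--Kolmogorov as soon as both sides carry the exponential-affine characteristic function \eqref{eq:Markov}. The task therefore reduces to extending $p_{s,t}(x,\cdot)$ from $\supp(X_s)$ to the full state space $D$ while preserving the affine form, and the two hypotheses call for different constructions.

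Under hypothesis (i) the only missing case is $s = 0$, since for every $s > 0$ the equality $\supp(X_s) = D$ already places the kernel on all of $D$. For $x \in D$ and $t > 0$ I would set $p_{0,t}(x,\cdot)$ to be the weak limit of $p_{s,t}(x,\cdot)$ as $s \downarrow 0$. By quasi-regularity, $\phi_s(t,u) \to \phi_0(t,u)$ and $\psi_s(t,u) \to \psi_0(t,u)$ as $s \downarrow 0$, so the characteristic functions $\exp(\phi_s(t,u)+\scal{\psi_s(t,u)}{x})$ converge pointwise to $\exp(\phi_0(t,u)+\scal{\psi_0(t,u)}{x})$, which is continuous at $u = 0$ since $\phi_0(t,0) = \psi_0(t,0) = 0$ and $\phi_0,\psi_0$ are continuous in $u$. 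L\'evy's continuity theorem then delivers a probability measure, and closedness of $D$ keeps the limit supported on $D$.

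Under hypothesis (ii) a Carath\'eodory-type decomposition is the idea. For any $x \in D$ the identity $\conv(\supp(X_s)) = D$ yields a representation $x = \sum_{i=0}^{d} \lambda_i x_i$ with $\lambda_i \ge 0$, $\sum_i \lambda_i = 1$, and $x_i \in \supp(X_s)$. By assumption each $p_{s,t}(x_i,\cdot)$ is an infinitely divisible probability on $D$; it therefore embeds into a convolution semigroup $(\mu_{i,\lambda})_{\lambda \ge 0}$ of probability measures on $D$ with $\mu_{i,1} = p_{s,t}(x_i,\cdot)$, and I would define
\[
  p_{s,t}(x,\cdot) := \mu_{0,\lambda_0} \ast \cdots \ast \mu_{d,\lambda_d}.
\]
A direct computation shows that its characteristic function equals $\prod_i \exp\!\bigl(\lambda_i[\phi_s(t,u)+\scal{\psi_s(t,u)}{x_i}]\bigr) = \exp(\phi_s(t,u)+\scal{\psi_s(t,u)}{x})$, which depends only on $x$ and not on the chosen decomposition; hence $p_{s,t}(x,\cdot)$ is unambiguously defined.

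In both cases the extended kernel $p_{s,t}(x,\cdot)$ depends weakly continuously on $x$ because its characteristic function is linear in $x$ in the exponent, which makes $x \mapsto p_{s,t}(x,B)$ Borel measurable for every $B \in \cB(D)$. Chapman--Kolmogorov at every $s$ follows from a short Fourier-side calculation combining the semi-flow equations \eqref{eq:flow} with \eqref{eq:Markov}. I expect the main technical point to be the convolution-semigroup step in case (ii): one has to argue that the fractional convolution powers of an infinitely divisible probability on the closed convex cone $D$ themselves remain supported on $D$, which uses closedness of $D$ and its cone structure together with a rational-approximation argument for irrational exponents.
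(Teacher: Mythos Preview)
Your proposal is correct and follows essentially the same route as the paper: both arguments reduce to showing that $\exp(\phi_s(t,u)+\scal{\psi_s(t,u)}{x})$ is the Fourier--Laplace transform of a probability measure on $D$ for every $x\in D$, handle case~(i) trivially for $s>0$, use fractional convolution powers of infinitely divisible laws to reach convex combinations in case~(ii), and close the gap at $s=0$ via quasi-regularity and a right-limit argument. The only cosmetic difference is that the paper writes the convex-combination step with two points (iterated if needed) while you invoke a full Carath\'eodory decomposition at once; you are also right to flag that the fractional roots remain supported on $D$ as the one genuinely nontrivial technical point, which the paper leaves implicit.
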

\begin{proof}
It suffices to show that the right side of \eqref{eq:Markov} is the Fourier transform of a probability measure on $D$ for all $x \in D$ and $0 < s \le t$. Indeed, if the family $\brac{p_{s,t}}_{0 \le s\leq t}$ satisfies \eqref{eq:Markov}, the semiflow equations \eqref{eq:flow} ensure that it satisfies the Chapman-Kolmogorov equations. By the Kolmogorov existence theorem (see, e.g., \cite[Theorem 8.4]{Kallenberg2002}), this guarantees the existence of a unique Markov process with transition kernels $\brac{p_{s,t}}_{0 \le s\leq t}$. 
Let $p_{s,t}(x, .)$ be the transition kernels of the semimartingale $X$, defined by \eqref{eq:reg_cond}. Note that by the affine property \eqref{cond:affine}, these kernels satisfy \eqref{eq:Markov} for all $x \in \supp(X_s)$, and it remains to extend the identity to all $x \in D$. In case (i), this is trivial for $s > 0$, since $\supp(X_s)= D$. In case (ii), by infinite divisibility, there exists, for any $\lambda \in (0,1)$, a probability kernel $p^{(\lambda)}_{s,t}(x, .)$, such that 
\begin{equation}
\int_D e^{\scal{u}{\xi}} p^{(\lambda)}_{s,t}\brac{x,d\xi}=e^{\lambda \phi_s\brac{t,u}+\scal{\psi_s\brac{t,u}}{\lambda x}}.
\end{equation}
Fix $x, y \in \supp(X_s)$, $\lambda \in (0,1)$ and let $z = \lambda x + (1-\lambda) y$ be a convex midpoint of $x$ and $y$. At $z$ we define $p_{s,t}(z,.) := p^{\lambda}_{s,t}(x,.) \star p^{(1-\lambda)}_{s,t}(y,.)$, where $\star$ denotes convolution of measures, and obtain
\begin{equation}
\int_D e^{\scal{u}{\xi}} p_{s,t}(z,d\xi)=e^{\phi_s\brac{t,u}+\scal{\psi_s\brac{t,u}}{\lambda x + (1-\lambda)y}} = e^{\phi_s\brac{t,u}+\scal{\psi_s\brac{t,u}}{z}}
\end{equation}
i.e. \eqref{eq:Markov} has been extended to the convex midpoint $z = \lambda x + (1-\lambda) y$ of $x$ and $y$. By Condition~\ref{def:full_support} we have $\conv(\supp(X_s)) = D$ for all $s > 0$, which shows \eqref{eq:Markov}, except at the time-point $s = 0$. In both cases (i) and  (ii) we can finally use the quasi-regularity property of $\phi, \psi$ to immediately extend \eqref{eq:Markov} to $s = 0$ by taking limits from the right.
\end{proof}

It turns out that infinite divisibility has even stronger implications on the structure of affine semimartingales, in particular at the deterministic jump times $J^A$.

\begin{lemma}\label{lem:id_jumps}Let $X$ be an infinitely divisible, quasi-regular affine semimartingale satisfying the support condition~\ref{def:full_support}. Then the conditional distribution of $\Delta X_t$ given $X_{t-}$ is $\PP$-a.s infinitely divisible, for any $t \ge 0$. Moreover, the parameters $\gamma = (\gamma_0, \gamma_1, \dotsc, \gamma_d)$ in Theorem~\ref{prop1} are of the following form: For any $t \in J^A$ and $i \in \set{0, \dotsc, d}$, there exist $\tilde \beta_i(t) \in \RR^d$, $\tilde \alpha_i(t) \in \cS^d$ and a (possibly signed) Borel measure $\tilde \mu_i(t,.)$ on $D \setminus \{0\}$, such that 
\begin{equation}\label{eq:gamma_decomp1}
\gamma_i(t,u) = \scal{\tilde \beta_i(t)}{u} + \frac{1}{2}\scal{u}{\tilde \alpha_i(t) u} + \int_D \left(e^{\scal{x}{u}} - 1 - \scal{h(x)}{u}\right) \tilde \mu_i(t,dx),
\end{equation}
for all $u \in \cU$.
\end{lemma}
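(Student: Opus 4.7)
The plan is to first establish that the conditional distribution of $\Delta X_t$ given $X_{t-}$ is infinitely divisible, and then to exploit the affine structure together with the support condition to decompose each $\gamma_i$ into L\'evy-Khintchine form.

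For the infinite divisibility claim I would combine the hypothesis with the limit identities of Lemma~\ref{lem:affine_limits}. For every $s<t$ the regular conditional kernel $p_{s,t}(X_s,\cdot)$ is $\PP$-a.s.\ infinitely divisible, so its Laplace exponent $u \mapsto \phi_s(t,u) + \scal{\psi_s(t,u)}{X_s}$ is of L\'evy-Khintchine form. Letting $s \uparrow t$ along a countable sequence, quasi-regularity of $(\phi,\psi)$ together with left-continuity of $X$ yields pointwise convergence in $u$ to $\phi_{t-}(t,u) + \scal{\psi_{t-}(t,u)}{X_{t-}}$. Pointwise limits of L\'evy-Khintchine exponents are again L\'evy-Khintchine (by L\'evy's continuity theorem and the closedness of the infinitely divisible class under weak convergence), so the limiting exponent is of L\'evy-Khintchine form, which by Lemma~\ref{lem:affine_limits} identifies the conditional law of $X_t$ given $\cF_{t-}$ as infinitely divisible $\PP$-a.s. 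Translating by the $\cF_{t-}$-measurable constant $X_{t-}$ preserves infinite divisibility, which gives the first assertion.

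For the representation of the $\gamma_i$'s I would fix $t \in J^A$ and read off from \eqref{eq:jump} combined with the terminal conditions in Lemma~\ref{lem:flow_cont} that
\[\gamma_0(t,u) = \phi_{t-}(t,u) \quad \text{and} \quad \bar\gamma(t,u) = \psi_{t-}(t,u) - u.\]
Thus $u \mapsto \gamma_0(t,u) + \scal{\bar\gamma(t,u)}{x}$ is of L\'evy-Khintchine form at every $x$ in a $\PP$-full-measure subset of $\supp(X_{t-})$. Since this expression is continuous (affine) in $x$ and the L\'evy-Khintchine class is closed under pointwise limits, the identity extends to every $x \in \supp(X_{t-})$. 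By Condition~\ref{def:full_support} the support contains $d+1$ convex-independent points $\xi^0,\ldots,\xi^d$, giving L\'evy-Khintchine triplets $(b^j, c^j, K^j)$ so that
\[\gamma_0(t,u) + \scal{\bar\gamma(t,u)}{\xi^j} = \scal{b^j}{u} + \half \scal{u}{c^j u} + \int_D \bigl(e^{\scal{x}{u}} - 1 - \scal{h(x)}{u}\bigr) K^j(dx), \quad j = 0,\ldots,d.\]

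The final step is linear algebra. Convex independence of $\xi^0,\ldots,\xi^d$ makes the matrix $H(\xi^0,\ldots,\xi^d)$ from \eqref{eq:H_def} invertible, so the $d+1$ identities above can be inverted to express each $\gamma_i(t,\cdot)$ as a real linear combination of the L\'evy-Khintchine exponents on the right-hand side. Collecting drift, diffusion and jump contributions of this combination separately produces \eqref{eq:gamma_decomp1} with $\tilde\beta_i(t) \in \R^d$, with $\tilde\alpha_i(t)$ symmetric but generally not positive semi-definite (hence only in $\cS^d$) and with signed Borel measures $\tilde\mu_i(t,\cdot)$ for which integrability of $1 \wedge |x|^2$ is inherited from the $K^j$. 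The main technical obstacle is the $s \uparrow t$ limit---in particular coordinating the $\PP$-exceptional null sets so that the limiting Laplace exponent is of L\'evy-Khintchine form at every point of $\supp(X_{t-})$ simultaneously; once this is justified, everything that follows is bookkeeping via convex independence.
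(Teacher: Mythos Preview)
Your proposal is correct and follows essentially the same route as the paper: take the left limit $s \uparrow t$ of the infinitely divisible kernels $p_{s,t}(X_s,\cdot)$ to identify the conditional law of $X_t$ (hence of $\Delta X_t$) given $\cF_{t-}$ as infinitely divisible, and then read off the L\'evy--Khintchine structure of $\gamma$. Where the paper simply asserts that ``the decomposition \eqref{eq:gamma_decomp1} then follows from the L\'evy--Khintchine formula,'' you spell out the linear-algebra step via the matrix $H(\xi^0,\dotsc,\xi^d)$ from \eqref{eq:H_def}, which is exactly the device used in Lemma~\ref{lem:inversion} and the proof of Theorem~\ref{prop1}; this makes the argument more self-contained.

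One small imprecision: you invoke Condition~\ref{def:full_support} to produce $d+1$ convex-independent points in $\supp(X_{t-})$, but the condition only speaks about $\supp(X_t)$ for $t>0$, and it is not automatic that $\conv(\supp(X_{t-}))=D$. The clean fix is to reverse the order of your two limiting operations: for each fixed $s<t$, first extend the L\'evy--Khintchine property of $u\mapsto \phi_s(t,u)+\scal{\psi_s(t,u)}{x}$ from $x\in\supp(X_s)$ to all $x\in D$ (convex combinations of LK exponents are LK, and $\conv(\supp(X_s))=D$ by hypothesis), and only then let $s\uparrow t$ using quasi-regularity. This yields the LK form of $u\mapsto \gamma_0(t,u)+\scal{\bar\gamma(t,u)}{x}$ for every $x\in D$ directly, after which your matrix inversion goes through with any choice of $d+1$ convex-independent points in $D$.
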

\begin{proof}
Using Lemma~\ref{lem:affinejumps} and the quasi-regularity property from Definition~\ref{def:quasi_regular}, we can write
\begin{align*}
E\big[ e^{\scal{u}{X_{t}}} |\cF_{t-} \big] &= \exp\left(-\Delta \phi_t(t,u)  - \scal{\Delta \psi_t(t,u)}{X_{t-}} \right) = \\
&= \lim_{s \uparrow t }\exp\left(\phi_s(t,u) + \scal{\psi_s(t,u)}{X_s} \right) = \lim_{s \uparrow t} \int_D e^{\scal{u}{\xi}} p_{s,t}\brac{X_s,d\xi}.
\end{align*}
Note that the right hand side is the limit of Fourier-Laplace transforms of infinitely divisible measures on $D$. The left hand side is the Fourier-Laplace transform of the distribution of $X_t$, conditionally on $\cF_{t-}$, and we conclude that also this distribution must be infinitely divisible. By Lemma~\ref{lem:affinejumps} $\gamma_0(t,u) = -\Delta \phi_t(t,u)$ and $\gamma_i(t,u) = -\Delta \psi^i_t(t,u)$ for all $i \in \set{1, \dotsc, d}$. The decomposition \eqref{eq:gamma_decomp1} then follows from the L\'evy-Khinchtine formula for infinitely divisible distributions.
\end{proof}

Recall the definition of a \emph{good parameter set} $(A,\gamma,\beta,\alpha,\mu)$ from Definition~\ref{def:good-parameter}, and note that the functions $\beta(t), \alpha(t)$ and $\mu(t,.)$ are only defined up to $A^c$-nullsets. In particular, we can modify $\beta, \alpha, \mu$ at any jump point $t \in J^A$ without affecting the validity of Theorem~\ref{prop1}. In light of the decomposition \eqref{eq:gamma_decomp1} of $\gamma$ this suggests the following definition:
\begin{definition}\label{def:enhanced}
Let $(A,\gamma,\beta,\alpha,\mu)$ be the good parameter set of an quasi-regular infinitely divisible affine semimartingale $X$ satisfying the support condition~\ref{def:full_support}. We \emph{enhance} the functions $\beta, \alpha, \mu$ by setting
\begin{subequations}\label{eq:FR_frak}
\begin{alignat}{2}
\alpha_i(t) &= \tfrac{1}{\Delta A_t} \tilde{\alpha}_i(t) \qquad \beta_i(t) &&= \tfrac{1}{\Delta A_t} \tilde{\beta}_i(t) \\
\mu_i(t,d\xi) &= \tfrac{1}{\Delta A_t} \tilde{\mu}_i(t,d\xi) && \text{for all $t \in J^A$, $i \in \set{0, \dotsc, d}$},
\end{alignat}
\end{subequations}
with $\tilde{\alpha}, \tilde{\beta}, \tilde{\mu}$ as in Lemma~\ref{lem:id_jumps} and refer to $(A,\beta,\alpha,\mu)$ as \emph{enhanced parameter set} of $X$.
\end{definition}
Note that $\gamma$ does no longer appear in the enhanced parameter set, since it was absorbed into the values of $\alpha, \beta, \mu$ at the time-points $t \in J^A$. The enhanced parameters also allow us to combine $F$ with $\gamma$ and $R$ with $\bar \gamma$ by setting
\begin{align*}
\mathfrak{F}\brac{t,u} &:= F(t,u) \ind{t \not \in J^A} +  \frac{1}{\Delta A_t}\gamma_0(t,u) \ind{t \in J^A},\\
\mathfrak{R}\brac{t,u} &:= R(t,u) \ind{t \not \in J^A} + \frac{1}{\Delta A_t} \bar \gamma(t,u) \ind{t \in J^A}.
\end{align*}
Both $\mathfrak{F}$ and $\mathfrak{R}$ are of L\'evy-Khintchine form and the continuous part \eqref{ric1}-\eqref{ric2} and discontinuous part \eqref{eq:Deltaphipsi} of the measure Riccati equations can be unified into the measure differential equations
\begin{align*}
\frac{d \phi_t(T,u)}{dA_t} &= - \mathfrak{F}(t,\psi_{t}(T,u)),\\
\frac{d \psi_t(T,u)}{dA_t} &= - \mathfrak{R}(t,\psi_{t}(T,u)), 
\end{align*}
which, together with the terminal conditions \eqref{eq:initial-cond}, are equivalent to the integral equations
\begin{subequations}\label{eq:unified-riccati}
\begin{align}
\phi_t\brac{T,u}&=\int_{(t,T]} \mathfrak{F}\brac{s,\psi_s\brac{T,u}}dA_s\label{eq:unified-riccati-phi},\\
\psi_t\brac{T,u}&=u+\int_{(t,T]} \mathfrak{R}\brac{s,\psi_s\brac{T,u}}dA_s.
\label{eq:unified-riccati-psi}
\end{align}
\end{subequations}

%%%%%%%%%%%%%%%%%%%%%%%%%%%%%%%%%%%%%%%%%%%%%%%%%%%%%%%%%%%%%%%%%%%

\section{Existence of affine Markov processes and affine semimartingales}\label{Existence}

In this section we show, under mild assumptions, the existence of affine semimartingales, using affine Markov processes as an intermediate step. While we have made no restriction on the state space $D$ before, we consider throughout this section only the `canonical state space' (cf. \cite{DuffieFilipovicSchachermayer, Filipovic05}) 
\[D=\R^m_{\ge 0}\times\R^n, \qquad m+n=d.\] 
Note that for this state space, $\cU$ takes the form $\cU = \mathbb{C}_{\leq0}^{m}\times i\mathbb{R}^{n}$. In addition we have
\[
\partial\mathcal{U} =  i\mathbb{R}^{d},\quad\mathcal{U}^{o} = \mathbb{C}_{<0}^{m}\times i\mathbb{R}^{n},
\]
as in \cite{DuffieFilipovicSchachermayer}. For notational simplicity we denote $\mathcal{I}=\left\{ 1,\cdots,m\right\} $, $\mathcal{J}=\left\{ m+1,\cdots,d\right\} $, and $\mathcal{I}\setminus i:=\mathcal{I}\setminus\left\{ i\right\} $, $\mathcal{J}\cup i:=\mathcal{J}\cup\left\{ i\right\} $ for any $i$. 
Finally, we  introduce the following short-hand notations: 
\begin{itemize}
\item For two subsets $I,J \subset \{1,\dots,d\}$ we denote by $a_{IJ}$ the submatrix of $a$ with indices in ${I}\times{J},$ i.e $a_{IJ}:=\left(a_{ij}\right)_{i\in{I},\, j\in{I}}$ 
\item $\beta$ denotes the matrix with columns $\beta_0, \beta_1, \dotsc, \beta_d$. We write $\bar \beta$ for $\beta$ with the first column dropped.
\item For any $i,k \in \set{0, \dotsc, d}$ we set $H_{ik}(t) := \int_{D \setminus \set{0}} h_i(\xi) \mu_k(t,d\xi)$ whenever the integral is finite. The other values can be chosen arbitrarily, and the resulting matrix is denoted by $H(t) = (H_{ik}(t))$.
\end{itemize}

Recall from Theorem \ref{prop1} that an affine semimartingale $X$ has a good parameter set $(A,\gamma, \alpha,\beta,\mu)$. To show existence of an affine semimartingale given a good parameter set we also need to take into account the geometry of our state space. In \cite{DuffieFilipovicSchachermayer} this was done by introducing admissibility conditions on the parameters. In the following definition we extend this notion to  our setting.

\begin{definition}
\label{def:admissible}
A good parameter set $(A,\gamma,\alpha,\beta,\mu)$ is called \emph{admissible}, if
\begin{enumerate}[(i)]
\item for $A^c$-almost all $t \in \Rplus$,
\begin{itemize}%[(i)]
\item $\alpha_i\left(t\right)\in\cS_+^{d}$ for all $ i\in \set{0,\cdots,d }$, $\alpha_{0;\mathcal{II}}\left(t\right)=0$, $\alpha_{i;\mathcal{I}\setminus i,\mathcal{I} \setminus i}(t)=0$ for $i\in \cI$, and $\alpha_j(t) = 0$ for $j \in J$,
\item $\beta(t)\in \R^{d\times (d+1)}$ such that $\beta_0 \in D$, $\bar \beta_{\mathcal{IJ}}\left(t\right)=0$ and $\bar \beta_{i(\mathcal{I}\setminus i)}\left(t\right) - H_{i(\mathcal{I}\setminus i)}(t) \in \mathbb{R}^{m-1}_{\geq0}$ for all $i\in \mathcal{I}$
\item $\mu(t)$ is a vector of L\'evy measures with support on $D$ such that $\mu_j\brac{t}=0$ for $j\in \mathcal{J}$ and $\mathcal{M}_{i}\left(t\right)<\infty$ for $i\in \mathcal{I}\cup{0}$, where,
\begin{equation}\label{eq:calM}
\mathcal{M}_{i}\left(t\right):=\int_{D\setminus\set{0}}\left(\left\langle h_{\mathcal{I}\setminus i}\left(\xi\right),1\right\rangle +\left\Vert h_{\mathcal{J}\cup i}\left(\xi\right)\right\Vert ^{2}\right)\mu_{i}\left(t,d\xi\right).
\end{equation}
\end{itemize}
\item for all $t\in J^A$ and all $x\in D$, the function $u \mapsto \exp\brac{\gamma_0\brac{t,u}+\scal{\bar \gamma \brac{t,u} + u}{x}}$ is the Fourier-Laplace transform of a $D$-valued random variable.
\end{enumerate}
If $X$ is infinitely divisible and $(A,\alpha, \beta, \mu)$ its enhanced parameter set (see Definition~\ref{def:enhanced}), then (ii) can be replaced by
\begin{enumerate}
\item[(ii')] for all $t \in J^A$ and $ i\in \set{0,\cdots,d }$,
\begin{itemize}
\item $\alpha_i\left(t\right)\in\cS_+^{d}$, $\alpha_{i;\mathcal{II}}\left(t\right)=0$ for $i \in \cI \cup 0$ and $\alpha_j(t) = 0$ for $j \in \cJ$,
\item $\beta_0(t) \in D$, $\bar \beta_{\mathcal{IJ}}\left(t\right)=0$ and $\bar \beta_{\mathcal{I}\mathcal{I}}(t)  - H_{\mathcal{I}\mathcal{I}}(t) + \id_d \in \Rplus^m$.
\item $\mu_i(t)$ is a L\'evy measure on $D$ with $\int_{D \setminus 0} \left(\left\langle h_{\mathcal{I}}\left(\xi\right),1\right\rangle +\left\Vert h_{\mathcal{J}}\left(\xi\right)\right\Vert ^{2}\right)\mu_{i}\left(t,d\xi\right) < \infty$ for $i \in \cI \cup 0$ and $\mu_j = 0$ for $j \in J$.
\end{itemize}
\end{enumerate}
\end{definition}

Note that a zero element on the diagonal of a semi-definite matrix implies that the whole corresponding row and column is zero; therefore further restrictions on the elements of $\alpha_i$ can be derived from the above conditions.

\begin{proposition}\label{prop:admissibility}
Let $X$ be a quasi-regular affine semimartingale satisfying the support condition~\ref{def:full_support} with good parameter set $(A,\gamma, \alpha,\beta,\mu)$. Suppose that 
\begin{enumerate}[(i)]
\item $\supp(X_t) = D$ for all $t > 0$, or
\item $X$ is infinitely divisible.
\end{enumerate}
Then the parameters $(A,\gamma, \alpha,\beta,\mu)$ are admissible.
\end{proposition}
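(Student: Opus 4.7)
The plan is to reduce the claim to pointwise admissibility at each $x\in D$ and then to invoke the standard state-constraint analysis for affine Markov processes on the canonical state space $D=\R^m_{\ge 0}\times\R^n$. By Lemma~\ref{lem:markov}, under either hypothesis $X$ can be realised as a conservative affine Markov process on $D$, and Theorem~\ref{prop1} provides the representation \eqref{char:affine} of its semimartingale characteristics in terms of $(A,\gamma,\alpha,\beta,\mu)$. Since $A^c$ does not charge $J^A$, it is enough to verify Definition~\ref{def:admissible}(i) on $\Rplus\setminus J^A$ and the jump condition separately at each $t\in J^A$.

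For condition~(i), fix $t_0\notin J^A$ and a neighbourhood $I$ of $t_0$ disjoint from $J^A$. On $I$ the process is stochastically continuous, and by \eqref{eq:char_B}--\eqref{eq:char_nu} its state-dependent local characteristics are
\[
b(t,x)=\beta_0(t)+\textstyle\sum_{i=1}^d x^i\beta_i(t),\quad c(t,x)=\alpha_0(t)+\textstyle\sum_{i=1}^d x^i\alpha_i(t),\quad K(t,x,\cdot)=\mu_0(t,\cdot)+\textstyle\sum_{i=1}^d x^i\mu_i(t,\cdot).
\]
Because $X$ stays in $D$, a standard state-constraint argument, identical to the one used in the stochastically continuous setting of \cite{DuffieFilipovicSchachermayer,Filipovic05}, forces the triplet $(b(t,x),c(t,x),K(t,x,\cdot))$ to be $D$-admissible at every $x\in D$: $c(t,x)\in\cS^d_+$ with vanishing $\mathcal{I}\mathcal{I}$-block on the faces $\{x^i=0\}$, the measure $K(t,x,\cdot)$ is a L\'evy measure on $D\setminus\{0\}$, and $b(t,x)$ points weakly inwards on those faces with the correct truncation correction involving $H(t)$. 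Evaluating these conditions at $d+1$ affinely independent points of $D$ (available by Condition~\ref{def:full_support} and convex combinations as in the proof of Lemma~\ref{lem:markov}) and matching coefficients decouples them into the component-wise constraints on $(\beta_i,\alpha_i,\mu_i)$ listed in Definition~\ref{def:admissible}(i); in particular $\alpha_j=\mu_j=0$ for $j\in\mathcal{J}$ reflects the fact that the $\R^n$-directions cannot contribute to keeping the $\R^m_{\ge 0}$-coordinates nonnegative.

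For the jump condition at $t\in J^A$, Lemma~\ref{lem:affinejumps} together with the definition of $\gamma$ yields
\[
E\bigl[e^{\scal{u}{X_t}}\bigm|\cF_{t-}\bigr]=\exp\bigl(\gamma_0(t,u)+\scal{\bar\gamma(t,u)+u}{X_{t-}}\bigr),
\]
which is the Fourier--Laplace transform of the $D$-valued transition kernel $p_{t-,t}(X_{t-},\cdot)$. This immediately gives condition~(ii) for every $x\in\supp(X_{t-})$, and the extension to all $x\in D$ proceeds exactly as in the proof of Lemma~\ref{lem:markov}, trivially in case~(i) and via the infinite-divisibility convolution argument in case~(ii). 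In the infinitely divisible case, Lemma~\ref{lem:id_jumps} additionally provides the L\'evy--Khintchine representation \eqref{eq:gamma_decomp1} of each $\gamma_i$, so after the enhancement of Definition~\ref{def:enhanced} the jump transition kernel is itself infinitely divisible and $D$-valued; applying the same state-constraint analysis used above for the continuous part, now applied pointwise in $t\in J^A$ to this kernel, yields (ii').

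The hardest step will be handling nullsets uniformly in $x$: the pointwise admissibility just described holds only $A^c$-almost everywhere, with an exceptional set that a priori depends on $x$. I would resolve this exactly as in the density argument at the end of the proof of Theorem~\ref{prop1}, namely by working on a countable dense subset of $D$, deriving the conditions there outside a single $A^c$-nullset, and then extending to all of $D$ using continuity of the L\'evy--Khintchine functions $F(t,\cdot), R_i(t,\cdot)$ in \eqref{eq:FR} together with the uniqueness of L\'evy--Khintchine triplets. A minor secondary technicality is the transfer of the classical admissibility arguments, formulated under smoothness of $\phi,\psi$, to our merely $A^c$-locally integrable setting; this is purely bookkeeping, since on each connected component of $\Rplus\setminus J^A$ the process is stochastically continuous and the required local integrability is built into Definition~\ref{def:good-parameter}.
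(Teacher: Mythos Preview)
Your reduction to the Markov setting via Lemma~\ref{lem:markov} and your treatment of the jump condition~(ii) are essentially the paper's. The gap is in the continuous part. You fix $t_0\notin J^A$ and a neighbourhood $I$ of $t_0$ disjoint from $J^A$, but no such neighbourhood need exist: $J^A$ is countable but may be dense in $\Rplus$. Even when such an $I$ exists, the appeal to a ``standard state-constraint argument, identical to the one used in \cite{DuffieFilipovicSchachermayer,Filipovic05}'' misreads those papers. They do not derive admissibility from a viability constraint on semimartingale characteristics; they derive it from the fact that $F(u)+\scal{R(u)}{x}$ is a right-derivative of log-characteristic functions of $D$-valued laws and is therefore of L\'evy--Khintchine form with $D$-supported L\'evy measure. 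In the merely quasi-regular setting here you have no such differentiability to invoke, and restricting to $I$ does not place you in the scope of either paper's regularity hypotheses.

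The paper avoids both problems by working pointwise in $t$ rather than on intervals. For $A^c$-almost every $t$ a Stieltjes-type Lebesgue differentiation theorem (\cite{Daniell1918}, \cite[Thm.~5.8.8]{BogachevMeasure}) produces $h_n\downarrow 0$ along which the incremental ratio
\[
\frac{1}{A_t-A_{t-h_n}}\int_D\bigl(e^{\scal{u}{\xi-x}}-1\bigr)\,p_{t-h_n,t}(x,d\xi)
\]
converges, and the measure Riccati equations identify the limit as $F(t,u)+\scal{R(t,u)}{x}$. Each prelimit is the log-characteristic function of a compound-Poisson law supported on $D$ (intensity $1/(A_t-A_{t-h_n})$, jump law $p_{t-h_n,t}(x,\cdot)$), hence infinitely divisible, and so is the limit; from here the constraints on $(\alpha,\beta,\mu)$ follow exactly as in \cite{DuffieFilipovicSchachermayer}. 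This compound-Poisson limit is the missing ingredient in your argument. Note too that this route handles all $x\in D$ at once, because the Markov kernels $p_{s,t}(x,\cdot)$ are already defined for every $x$; the nullset-in-$x$ issue you flag as the ``hardest step'' therefore does not arise. For~(ii') the paper appeals to support theorems for infinitely divisible distributions \cite[Ch.~24]{Sato} rather than a state-constraint argument.
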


\begin{proof}
%{\bf 1.}
By Lemma \ref{lem:markov}, $X$ can be realized as a (time-inhomogeneous)  Markov process with transition kernels $p_{s,t}\brac{x,d\xi}$, defined for all $0 \le s \le t$ and $x \in D$. Set $f_u(x)=e^{\scal{u}{x}}$ for $u\in \mathcal{U}$. Similar to the proofs of admissibility in \cite{DuffieFilipovicSchachermayer}  we consider the following limit
\begin{align}\label{eq:limit-generator}
G_t f_u (x) &\coloneqq \lim_{h\downarrow0} \frac{\E\brak{f_u\brac{X_{t}}\vert X_{t-h}=x}-e^{\scal{u}{x}}} {A_t-A_{t-h}} = \\
&= \notag \lim_{h\downarrow0} \frac{\exp \left(\phi_{t-h}(t,u) + \scal{\psi_{t-h}(t,u)}{x}\right) - e^{\scal{u}{x}}}{A_t-A_{t-h}}.
\end{align}
For $A^c$-almost all $t \in \Rplus$, there exists a  sequence $\brac{h_n}_{n\in \mathbb{N}}$, decreasing to $0$, along which the limit exists (c.f. the main Theorem in \cite{Daniell1918} or \cite[Theorem 5.8.8]{BogachevMeasure}).
From \eqref{eq:Utcond2}, together with \eqref{ric1} and \eqref{ric2},  we can identify the limit to be
\begin{equation}
\label{eq:cont-gen}
G_t f_u(x)=\brac{F(t,u)+\scal{R(t,u)}{x}}f_u(x).
\end{equation}
For $t\in J^A$, we obtain instead from \eqref{eq:jump} that
\begin{eqnarray}
\label{eq:discont-gen}
G_s f_u(x)&=&\brac{e^{-\Delta \phi_s\brac{s,u}-\scal{\Delta \psi_s\brac{s,u}}{x}}-1}\cdot\frac{1}{\Delta A_s}f_u(x).
\end{eqnarray}
On the other hand we can write the limit in terms of the transition kernels of $X$ as
\begin{align*}
\frac{G_t f_u(x)}{f_u(x)}&=\lim_{n\rightarrow \infty}\frac{1}{A_t-A_{t-h_n}}\brac{\int_D \Brac{f_u\brac{\xi-x}-1}p_{t-h_n,t}\Brac{x,d\xi}}.
\end{align*}
By \eqref{eq:cont-gen} and \eqref{eq:discont-gen} the above limit exists and is continuous at $u=0$. If $t$ is a continuity point of $A$, we interpret the integral term in the last line as the log-characteristic function of a compound Poisson distribution with intensity $1/\brac{A_t-A_{t-h_n}}$, which is infinitely divisible. This implies that also their weak limit is infinitely divisible. We conclude that the r.h.s. of \eqref{eq:cont-gen} is the log-characteristic functions of an infinitely divisible distributions and therefore of L\'evy-Khintchine form. From here the admissibility of $\brac{\alpha,\beta,\mu}$ at points of continuity of $A$ follows on the same lines as in \cite{DuffieFilipovicSchachermayer}. 
For discontinuity points $t \in J^A$ of $A$, we obtain from \eqref{eq:discont-gen} that
\[\exp\left(\gamma_0(t,u) + \scal{\bar \gamma(t,u) + u}{x}\right) = \int_D f_u(\xi) p_{t-,t}(x,d\xi),\]
where we have written $p_{t-,t}(x,.)$ for the weak limit of $p_{t-h,t}(x,.)$ as $h \downarrow 0$. Part (ii) of the admissibility conditions follows form the fact that $p_{t-,t}(x,.)$ must be supported on $D$ for all $x \in D$ and $0 \le s \le t$. If $X$ is infinitely divisible, then the decomposition of $\gamma$ as \eqref{eq:gamma_decomp1}, together with standard support theorems for infinitely divisibly distributions (cf. \cite[Ch.~24]{Sato}) yield (ii').
\end{proof}

%%%% Proving existence %%%%

In the remaining part of the section we show the following: Given an admissible enhanced parameter set, we can construct a Markov process that is an infinitely divisible affine semimartingale for every starting point in $D$. In this regard we require a further integrability assumption.

\begin{assumption}\label{a:integrable}
Given an enhanced parameter set $(A,\beta,\alpha, \mu)$, assume that $\alpha,\;\beta$ and $\mathcal{M}$ defined by \eqref{eq:calM} are locally integrable with respect to $A$.
\end{assumption}

\begin{proposition}\label{prop:ex-riccati}
Let $(A,\alpha,\beta,\mu)$ be an admissible enhanced parameter set satisfying Assumption \ref{a:integrable}. Then, for all $(T,u) \in (0,\infty) \times \mathcal{U}^\circ$ there exists a unique solution $\brac{\phi_.(T,u),\psi_.(T,u)}$ on $[0,T]$ to the generalized measure Riccati equations \eqref{char:affine}-\eqref{eq:initial-cond} (or equivalently to \eqref{eq:unified-riccati}).
\end{proposition}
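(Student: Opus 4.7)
The plan is to solve the two equations sequentially. Once $\psi_\cdot(T,u)$ is obtained, $\phi_\cdot(T,u)$ follows by direct integration via~\eqref{eq:unified-riccati-phi}, with $dA$-integrability of $\mathfrak{F}(\cdot,\psi_\cdot(T,u))$ guaranteed by the L\'evy--Khintchine form of $\mathfrak{F}$ and Assumption~\ref{a:integrable}. Accordingly the substance of the proof lies in producing a unique solution to~\eqref{eq:unified-riccati-psi}.

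I would establish local existence backwards from $T$ via Picard iteration on the Banach space of c\`adl\`ag paths $[T-\epsilon,T]\to\cU$ with supremum norm. Two observations are crucial. First, by admissibility of $(\alpha,\beta,\mu)$ and the finiteness of $\mathcal{M}_i(t)$, the nonlinearity $\mathfrak{R}(t,\cdot)$ is well-defined and continuous on all of $\cU$; on compact subsets of $\cU^\circ$ it is analytic, hence Lipschitz with some constant $L(t)$ that is $dA$-locally integrable by Assumption~\ref{a:integrable}. Second, admissibility condition~(ii) (or~(ii')) implies that $v\mapsto v+\bar\gamma(t,v)$ maps $\cU$ into $\cU$, so a possible jump of $A$ at the right endpoint $T$ is absorbed by starting the iteration from $\psi_{T-}=u+\bar\gamma(T,u)\in\cU$ rather than from $u$. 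For $\epsilon$ small enough the Picard operator contracts on a small closed ball inside $\cU$ around the starting value, producing a unique local solution.

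The main obstacle is extending this local solution to all of $[0,T]$ while keeping the trajectory inside $\cU$ and preventing blow-up. Invariance of $\cU$ on continuity intervals of $A$ is precisely what admissibility of $(\alpha,\beta,\mu)$ is designed for, following the classical argument in~\cite{DuffieFilipovicSchachermayer}: when $\Re\psi^i_t=0$ for some $i\in\cI$, the conditions $\alpha_{0;\cI\cI}=\alpha_{i;\cI\setminus i,\cI\setminus i}=0$, $\bar\beta_{i(\cI\setminus i)}\geq H_{i(\cI\setminus i)}$ and the support of $\mu_i$ in $D$ force $\Re\mathfrak{R}_i(t,\psi_t)\leq 0$, so $\Re\psi^i$ cannot become positive; for $j\in\cJ$ the vanishing of $\alpha_j$, $\bar\beta_{\cI j}$ and $\mu_j$ renders $\mathfrak{R}_j(t,\cdot)$ purely imaginary on $\cU$, so $\psi^j$ stays on the imaginary axis. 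At a jump time $s_0\in J^A\cap[0,T]$, $\psi_{s_0-}=\psi_{s_0}+\bar\gamma(s_0,\psi_{s_0})$ again lies in $\cU$ by~(ii)/(ii'), so jumps respect the state constraint as well. With invariance secured, the at most linear growth of $\mathfrak{R}$ on $\{u\in\cU:\Re u_\cI \leq 0\}$ together with Assumption~\ref{a:integrable} feeds a Gronwall-type estimate for the measure integral equation that rules out finite-$A$-time blow-up of $|\psi_t|$.

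Uniqueness follows by the same Gronwall argument applied to the difference of two candidate solutions, using the Lipschitz constant of $\mathfrak{R}$ on the compact trajectory carved out inside $\cU^\circ$. The subtlest ingredient is a clean measure-differential version of Gronwall's lemma that handles both continuous and discrete $dA$-mass simultaneously, which I expect to invoke from the appendix on measure differential equations; this is the step where the analysis most visibly departs from the ODE framework of~\cite{DuffieFilipovicSchachermayer,Filipovic05}.
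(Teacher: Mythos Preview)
Your overall architecture---solve for $\psi$ first, then integrate for $\phi$; local existence by Picard; global extension by invariance plus a Gronwall bound---matches the paper. Two steps, however, are not as you describe and the paper handles them differently.

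First, $\mathfrak{R}$ does \emph{not} have at most linear growth on $\cU$: the diffusion term $\tfrac12\langle u,\alpha_i(t)u\rangle$ is quadratic, and for $i\in\cI$ the coefficient $\alpha^i_{ii}$ need not vanish. A linear Gronwall on $|\psi_t|$ therefore cannot close. The paper instead first decouples the $\cJ$-components, which satisfy the \emph{linear} equation $\psi^{\cJ}_t(T,u)=w+\int_t^T\bar\beta_{\cJ\cJ}(s)\psi^{\cJ}_s(T,u)\,dA_s$ and are solved explicitly as $\psi^{\cJ}_t(T,u)=w\,\psi^{\cJ}_t(T)$. With $\psi^{\cJ}$ known and bounded on $[0,T]$, one estimates $\|\psi^{\cI}\|^2$ via the chain rule for BV functions and the key inequality $\Re(\bar v_i\,\mathfrak{R}_i(t,u))\le C_t(1+\|w\|^2)(1+\|v\|^2)$ from \cite[Prop.~6.1]{DuffieFilipovicSchachermayer}. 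This yields a Gronwall inequality for $\|\psi^{\cI}\|^2$ with an $A$-integrable coefficient, which is what rules out blow-up. Your sketch is missing both the $\cI/\cJ$ decoupling and the passage to the squared norm.

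Second, your invariance step---``when $\Re\psi^i_t=0$ the drift points inward''---only shows that the solution stays in $\cU$, not in $\cU^\circ$. But local Lipschitz continuity (hence local uniqueness and the continuation argument) is obtained from analyticity of $\mathfrak{R}(t,\cdot)$ in $v$, which holds on $\cU^\circ$; at the boundary the Lipschitz constant may degenerate. The paper avoids touching the boundary altogether: from the L\'evy--Khintchine form one has $\Re\mathfrak{R}_i(t,u)\le C(t)\big((\Re u_i)^2-\Re u_i\big)$, and the comparison result for measure differential equations (Proposition~\ref{prop:comparison}) then gives $\Re\psi^i_t(T,u)\le f_t$ for a scalar solution $f$ with $f_T=\Re v_i<0$ that remains strictly negative on $[0,T]$. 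Thus the trajectory stays in $\cU^\circ$ throughout, and the lifetime $\tau_{T,u}$ is zero. In the measure-DE setting this comparison route is also cleaner than a boundary argument, since jumps could in principle carry the solution onto $\partial\cU$ in one step; the comparison lemma handles continuous and jump parts uniformly.
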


In the following let $u=(v,w)\in\mathcal{U}$ with $v\in\C^m_{\leq0}$ and $w\in i\R^n$. We will also use the convention $\int_{(a,b]}=\int_a^b$ to shorten notation in some places.

\begin{proof} Since an enhanced parameter set is given, the generalized measure Riccati equations \eqref{char:affine}-\eqref{eq:initial-cond} can be combined into \eqref{eq:unified-riccati}. It suffices to show existence of a unique global solution to equation \eqref{eq:unified-riccati-psi}, since existence and uniqueness for \eqref{eq:unified-riccati-phi} then follows by simple integration (note that $\phi$ does not appear on the right hand side of \eqref{eq:unified-riccati-phi}). Due to the admissibility conditions the equation for $\psi$ can be split into an equation for the components $\psi^{\mathcal{I}}= (\psi^i) ,i\in\mathcal{I}$ and a decoupled linear equation for the components with $j\in\mathcal{J}$ (see also \cite[Sec.~6]{DuffieFilipovicSchachermayer}), which can be written as:
\begin{equation*}
\psi^{\mathcal{J}}_t\brac{T,u}=w+\int_t^T \bar{\beta}_{JJ}\brac{s} \psi^{\mathcal{J}}_s\brac{T,u} dA_s.
\end{equation*}
This linear equation can be solved according to Example \ref{ex:linear-equation} in the Appendix which yields a function with linear dependency on the starting value $w$, i.e., 
\begin{equation}
\psi^{\mathcal{J}}_t\brac{T,u}=w\psi^\mathcal{J}_t\brac{T},\quad \,\psi^\mathcal{J}\brac{T}: [0,T] \to \R^{n\times n}.
\label{eq:psi-J-linear}
\end{equation}

The existence and uniqueness of a \emph{local} solution to the generalized measure Riccati equation \eqref{eq:unified-riccati-psi} is a consequence of Theorem \ref{thm:ex-unique-de} in the appendix. Indeed, $\mathfrak{R}(t,(v,w))$ is of L\'evy-Khintchine form, hence analytical in $v$ by Lemma 5.3(i) in \cite{DuffieFilipovicSchachermayer} and thus locally Lipschitz continuous in $u$ with a Lipschitz constant that can be chosen $A$-integrable, due to the integrability of the enhanced parameters $\brac{\alpha, \beta, \mu}$.
To extend the local solution to the entire time-horizon we adopt the proof in \cite{DuffieFilipovicSchachermayer} to our setting. Let $g\brac{\cdot,T,u}$ be a local solution to the Riccati equations with terminal condition $u\in\mathcal{U}^\circ$ at time $T$. We have to show that $g$ extends -- backwards in time -- to a global solution on $[0,T]$. Consider the life-time of $g$ in $\mathcal{U}^{\circ}$
\begin{equation*}
\label{eq:tau-T-u}
\tau_{T,u}\coloneqq \limsup_{n\rightarrow\infty}\set{t\in \R_+\vert \Norm{g\brac{t,T,u}}\geq n \mbox{ or }g\brac{t,T,u}\in \brac{\mathcal{U}^\circ}^\complement}.
\end{equation*}
For the existence on the entire time horizon $\tau_{T,u}$ has to be zero, for all $u\in\mathcal{U}^\circ$.
Similar to \cite[equation (6.8)]{DuffieFilipovicSchachermayer} we obtain from the L\'evy--Khintchine form of $\mathfrak{R}$ for $dA$-almost-all $t$ that 
\begin{equation}
\label{eq:estimation-R-cont}
\mbox{Re } \mathfrak{R}_i\brac{t,u}\leq C(t)  \brac{\brac{\mbox{Re }u_i}^2 - \mbox{Re }u_i},
\end{equation}
where $C(t)$ is a constant independent of $u$, for all $t$. The integrability of the parameters of $\mathfrak{R}$ allows to choose $C$ as also being $A$-integrable. Hence the local solution $g$ satisfies the following integral inequality 
\begin{eqnarray*}
\mbox{Re }g^i_t\brac{T,u}&\leq&v+\int_{(t,T]}C\brac{s}\brac{\brac{\mbox{Re } g^i_s\brac{T,u}}^2-\mbox{Re }g^i_s\brac{T,u}}dA_s.
\end{eqnarray*}
By the comparison result Proposition \ref{prop:comparison} for measure differential equations, stated in the appendix,  we get
\begin{equation*}
\mbox{Re }g^i_t\brac{T,u}\leq f_t\brac{T,u}
\end{equation*}
where $f$ satisfies
\begin{equation*}
f_t\brac{T,u}=\Re v+\int_{(t,T]}C\brac{s}\brac{f_s\brac{T,u}^2-f_s\brac{T,u}}dA_s.
\end{equation*}
Note that for all $K>0$ there exists $c>0$ such that $(x^2-x)<-cx$ as long as $x\in \brac{-K,0}$. Hence, $f_\cdot\brac{T,u}<0$ for all $u\in\mathcal{U}^\circ$.

For the upper bound we consider the squared norm of $\psi^\mathcal{I}$. With the chain rule formula for functions of bounded variation in \cite[Theorem 4.1]{CrastaCicco2011} we can write
\begin{equation}
\begin{split}
\Norm{\psi_t^\mathcal{I}\brac{T,u}}^2=&\Norm{v}^2+\int_{(t,T]} 2 \Re\scal{\overline{\psi_s^\mathcal{I}\brac{T,u}}}{\mathfrak{R}^\mathcal{I}\brac{s,\psi_s^\mathcal{I}\brac{T,u},\psi_s^\mathcal{J}\brac{T,u}}}dA^c_s.
\\
& + \sum_{s\in(t,T]}\Norm{\psi_s^\mathcal{I}\brac{T,u}}^2-\Norm{\psi_{s-}^\mathcal{I}\brac{T,u}}^2
\\
=&\Norm{v}^2+\int_{(t,T]} 2 \Re\scal{\overline{\psi_s^\mathcal{I}\brac{T,u}}}{\mathfrak{R}^\mathcal{I}\brac{s,\psi_s^\mathcal{I}\brac{T,u},\psi_s^\mathcal{J}\brac{T,u}}}dA_s.
\\
& - \sum_{s\in(t,T]} 
\scal{\overline{\Delta\psi_s^\mathcal{I}\brac{T,u}}}{\Delta \psi_s^\mathcal{I}\brac{T,u}}
\\
\leq&\Norm{v}^2+\int_{(t,T]} 2 \Re\scal{\overline{\psi_s^\mathcal{I}\brac{T,u}}}{\mathfrak{R}^\mathcal{I}\brac{s,\psi_s^\mathcal{I}\brac{T,u},\psi_s^\mathcal{J}\brac{T,u}}}dA_s.
\end{split}
\label{eq:norm-psi-squared}
\end{equation}
where we have used $\psi_{s-}^\mathcal{I}\brac{T,u} = \psi_{s}^\mathcal{I}\brac{T,u}-\Delta \psi_{s}^\mathcal{I}\brac{T,u}$ in the second line. With
\begin{equation*}
K\brac{t,u}\coloneqq \Re v_i\scal{\alpha^i_{\mathcal{JJ}}\brac{t}w}{w}+\Re \bar{v}_i\scal{\beta_i\brac{t} - H_i(t)}{u},
\end{equation*}
we can write
\begin{eqnarray*}
\Re\brac{\bar{v}_i \mathfrak{R}_i\brac{t,u}}&=&\alpha^i_{ii}\brac{t}\norm{v_i}^2\Re v_i +K\brac{t,u}
\\
&&+\Re\brac{\bar{v_i}\int_{D\setminus\set{0}}\brac{e^{\scal{u}{\xi}}-1-\scal{u_{\mathcal{J}\cup i}}{h_{\mathcal{J}\cup i}\brac{\xi}}}\mu_i\brac{t,d\xi}}.
\end{eqnarray*}
Using the same calculations as Proposition 6.1 in \cite{DuffieFilipovicSchachermayer} we obtain the following estimate:
\begin{equation*}
\Re \brac{\bar{v}_i \mathfrak{R}_i\brac{t,u}}\leq C_t\brac{1+\Norm{w}^2}\brac{1+\Norm{v}^2},\quad \forall u=(v,w)\in\mathcal{U}.
\end{equation*}
From the $A$-integrability of $\mathcal{M}$ it follows that $C$, which is independent of $u$, can be chosen $A$-integrable. Inserting the above equation into \eqref{eq:norm-psi-squared} we obtain
\begin{equation*}
\Norm{\psi^{\mathcal{I}}_t\brac{T,u}}^2\leq \Norm{v}^2+\int_{(t,T]} C_s\brac{1+\Norm{\psi^{\mathcal{J}}_s\brac{T,u}}^2}\brac{1+\Norm{\psi^{\mathcal{I}}_s\brac{T,u}}^2}dA_s.
\end{equation*}
Gronwalls inequality for measure differential equations (c.f. \cite[Corollary 19.3.3]{Gil2007}) yields
\begin{eqnarray}
\Norm{\psi^{\mathcal{I}}_t\brac{T,u}}^2\leq \Norm{v}^2\exp\brac{\int_{(t,T]} C_s\brac{1+\Norm{\psi^{\mathcal{J}}_s\brac{T,u}}^2}dA_s}.
\label{eq:uniform-boundedness-psi}
\end{eqnarray}
With \eqref{eq:psi-J-linear} this shows that the solution can not explode 
and thus $\tau_{T,u}=0$, i.e., we have a solution on $[0,T]$.
\end{proof}

\begin{proposition}
\label{prop:properties-phi-psi}Let $\left(\phi,\psi\right)$ be a
solution to the generalized measure Riccati equations  (\ref{ric1})-(\ref{eq:initial-cond}). Then it holds that
\begin{enumerate}[(i)]
\item for each $u\in \cU$ and $s<t$ the left limits
\[
\phi_s\left(t\uminus,u\right)=\lim_{\varepsilon\downarrow0}\phi_s\left(t-\varepsilon\right)\mbox{, and }\psi_s\left(t\uminus,u\right)=\lim_{\varepsilon\downarrow0}\psi_s\left(t-\varepsilon,u\right)
\]
exist.

\item For all $u=\brac{v,w}\in\mathcal{U}$ and $s\leq t$, $\psi_s^{\mathcal{J}}\left(t,\left(v,0\right)\right)=0$.

\item\label{item:semiflow} $\brac{\phi,\psi}$ satisfy the semiflow property, i.e. let $r\leq s\leq t$ then for all $u\in\mathcal{U}^\circ$ 
\begin{align*}
\phi_{r}\left(t,u\right)  &=  \phi_{s}\left(t,u\right)+\phi_{r}\left(s,\psi_{s}\left(t,u\right)\right)&\quad\mbox{and }\phi_{t}\brac{t,u}=0,
\\
\psi_{r}\left(t,u\right)  &=  \psi_{r}\left(s,\psi_{s}\left(t,u\right)\right)&\quad\mbox{and }\psi_{t}\left(t,u\right)=u.
\end{align*}

\item\label{item:uniform-boundedness} For all $t\in\brak{0,T}$ and $K\subset\mathcal{U}$ compact $$\sup_{u\in K, s\leq t}\Norm{\psi_s\brac{t,u}}<\infty.$$

\end{enumerate}
\end{proposition}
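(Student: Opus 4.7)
My plan is to prove the four claims in the order (iii), (ii), (iv), (i), since the semiflow property and the uniform bounds are needed for the left-limit argument in (i).

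For the semiflow in (iii), both $r \mapsto \psi_r(t, u)$ and $r \mapsto \psi_r(s, \psi_s(t, u))$ satisfy \eqref{eq:unified-riccati-psi} on $r \in [0, s]$ with the same terminal value $\psi_s(t, u)$ at $r = s$: indeed, $\psi_s(s, \cdot) = \mathrm{id}$ by evaluating \eqref{eq:unified-riccati-psi} at $t = s$, so the composed function has the correct terminal data. The uniqueness part of Theorem~\ref{thm:ex-unique-de} (applied to the backward problem) forces equality on $[0, s]$. Substituting the identity for $\psi$ into the integral representation \eqref{eq:unified-riccati-phi} and splitting the integration range at $s$ yields the semiflow for $\phi$ directly, without an additional uniqueness step. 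The terminal conditions $\phi_t(t,u) = 0$ and $\psi_t(t,u) = u$ are immediate from \eqref{eq:unified-riccati}.

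For (ii), admissibility forces $\bar\beta_{\mathcal{IJ}} = 0$, $\alpha_j = 0$ and $\mu_j = 0$ for $j \in \mathcal{J}$, so that for $j \in \mathcal{J}$ the function $\mathfrak{R}^j(r, u)$ depends on $u$ only through the $\mathcal{J}$-block $w$ and is linear in it. Hence $s \mapsto \psi^{\mathcal{J}}_s(t, (v, 0))$ satisfies a homogeneous linear measure differential equation with zero terminal value, and uniqueness (cf.\ Example~\ref{ex:linear-equation}) forces this solution to vanish identically. For (iv), the estimate \eqref{eq:uniform-boundedness-psi} obtained in the proof of Proposition~\ref{prop:ex-riccati}, together with the linear representation \eqref{eq:psi-J-linear} for $\psi^{\mathcal{J}}$ and the local $A$-integrability of $C$ granted by Assumption~\ref{a:integrable}, yields the required uniform bound on $\Norm{\psi_s(t, u)}$ for $(s, u)$ ranging over $[0, t] \times K$ with compact $K \subset \mathcal{U}$.

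For (i), the plan is a Gronwall argument applied to differences of solutions with different terminal times. Fix $s$ and $u$, and take a sequence $t_n \uparrow t_0$. Then
\begin{align*}
\psi_s(t_n, u) - \psi_s(t_m, u) &= \int_{(t_m, t_n]} \mathfrak{R}(r, \psi_r(t_n, u))\, dA_r \\
&\quad + \int_{(s, t_m]} \left( \mathfrak{R}(r, \psi_r(t_n, u)) - \mathfrak{R}(r, \psi_r(t_m, u)) \right) dA_r.
\end{align*}
By the uniform bound from (iv) applied to $\{\psi_r(t_k, u) : r \le t_k \le t_0,\ k \in \mathbb{N}\}$ and the local Lipschitz continuity of $\mathfrak{R}$ in its second argument (inherent in its L\'evy--Khintchine form), the first term is bounded by a constant multiple of $A_{t_n} - A_{t_m}$, which tends to $0$ because $A$ admits a left limit at $t_0$. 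Gronwall's inequality for measure differential equations (cf.~\cite[Corollary 19.3.3]{Gil2007}) then bounds the full difference by a multiple of $A_{t_n} - A_{t_m}$, showing that $(\psi_s(t_n, u))_n$ is Cauchy and hence that $\psi_s(t_0\uminus, u)$ exists. The argument for $\phi$ is analogous and in fact simpler, since \eqref{eq:unified-riccati-phi} depends on $t$ only through its integration range and through $\psi$. The main obstacle is precisely this step: the joint dependence of $\psi_r(t, u)$ on both $r$ and $t$ forces careful Gronwall bookkeeping, whereas (ii), (iii), and (iv) are direct consequences of uniqueness and the estimates already obtained in Proposition~\ref{prop:ex-riccati}.
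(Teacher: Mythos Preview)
Your proof is correct and takes essentially the same approach as the paper: for (ii), (iii), and (iv) you reproduce the paper's arguments (admissibility for the vanishing of $\psi^{\mathcal{J}}$, uniqueness of the Riccati solution for the semiflow, and the estimates \eqref{eq:psi-J-linear} and \eqref{eq:uniform-boundedness-psi} for the uniform bound). For (i) the paper simply asserts that it follows from the integral representation; your Cauchy--Gronwall argument spells this out and is correct, with the only minor remark that the uniformity in the terminal time you need comes not from (iv) as stated but directly from the underlying bound \eqref{eq:uniform-boundedness-psi} together with \eqref{eq:psi-J-linear}.
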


\begin{proof}
The first assertion follows from the integral representation of $\phi$ and $\psi$. The second assertion can be derived directly from the admissibility conditions. Regarding \eqref{item:semiflow}, let $s\leq t,\; u\in\mathcal{U}^\circ$ and define
\begin{equation*}
f\brac{r}\coloneqq \psi_r\brac{s,\psi_s\brac{t,u}}, \mbox{ for } 0\leq r\leq s.
\end{equation*}
Plugging equation \eqref{eq:unified-riccati-psi} into the above definition we see that - on $[0,s]$ - $f$ satisfies the same measure Riccati equation  as $\psi_r\brac{t,u}$:
\begin{eqnarray*}
f(r) &=& \psi_s\brac{t,u}+\int_{(r,s]} \mathfrak{R}\BRac{w,f(w)}dA_s
\end{eqnarray*}
By  uniqueness  of the Riccati equation we infer $f\brac{r}=\psi\brac{r,t,u}$. A simple calculation exploiting the above and equation \eqref{eq:unified-riccati-psi} shows the equation for $\phi$.
Assertion \eqref{item:uniform-boundedness} follows readily from equations \eqref{eq:psi-J-linear} and \eqref{eq:uniform-boundedness-psi}.
\end{proof}

We are now prepared to state our main result on existence of affine Markov processes and affine semimartingales:

\begin{theorem}\label{thm:existence}
Let $(A,\alpha, \beta, \mu)$ be an admissible enhanced parameter set satisfying Assumption~\ref{a:integrable}. Then there exists an infinitely divisible affine Markov process $X$ (cf. Definition~\ref{def:affine_markov}) with $\phi, \psi$ solutions of the associated measure Riccati equations.
If $X$ is conservative, then it is an affine semimartingale with characteristics given by \eqref{char:affine}, for any initial point $X_0 = x \in D$.
\end{theorem}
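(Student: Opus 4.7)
The plan is to proceed in three stages: first use the Riccati solutions to construct candidate transition kernels, then show they define a Markov process with the required affine transition structure, and finally verify the semimartingale characteristics against \eqref{char:affine}.

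By Proposition \ref{prop:ex-riccati}, admissibility together with Assumption \ref{a:integrable} yields unique global solutions $\phi_s(t,u), \psi_s(t,u)$ on $\set{0\le s\le t}\times \mathcal{U}^\circ$, extending by continuity to $u\in\partial\mathcal{U} = i\R^d$. Given $s\le t$ and $x\in D$, I would consider the candidate
\[
\widehat p_{s,t}(x,u) := \exp\brac{\phi_s(t,u) + \scal{\psi_s(t,u)}{x}}, \qquad u\in\cU,
\]
and the decisive task is to show that the restriction of $u\mapsto \widehat p_{s,t}(x,u)$ to $i\R^d$ is the characteristic function of an infinitely divisible probability measure $p_{s,t}(x,\cdot)$ supported on $D$. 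To this end I would approximate by a discretization: choose a partition of $[s,t]$ that contains all jump points of $A$ in the interval together with a fine mesh of continuity points. On each continuity sub-interval, the integrand $\mathfrak{F},\mathfrak{R}$ is of L\'evy--Khintchine form and the local admissibility conditions reduce to those of \cite{DuffieFilipovicSchachermayer}, so that an Euler-type iteration (or local Feller semigroup) supplies an infinitely divisible probability kernel on $D$ whose log-characteristic function agrees with the approximate Riccati solution. At each jump point $t_0\in J^A$, admissibility condition (ii') directly guarantees that $u\mapsto\exp\brac{\gamma_0(t_0,u)+\scal{\bar\gamma(t_0,u)+u}{x}}$ is the Fourier--Laplace transform of a $D$-valued infinitely divisible distribution. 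The semiflow property from Proposition \ref{prop:properties-phi-psi}\eqref{item:semiflow} assembles these building blocks along the partition; since the class of $D$-supported infinitely divisible distributions is closed under convolution and pointwise convergence of characteristic functions (combined with tightness, which follows from the uniform bound \eqref{eq:uniform-boundedness-psi} on $\psi$ and hence on the L\'evy exponents), refining the partition produces $p_{s,t}(x,\cdot)$ as the limit.

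Once $p_{s,t}(x,\cdot)$ is a genuine probability kernel, the semiflow identity translates verbatim into the Chapman--Kolmogorov equation
\[
p_{s,t}(x,B) = \int_D p_{r,t}(y,B)\, p_{s,r}(x,dy), \qquad s\le r\le t,\ B\in\cB(D),
\]
and Kolmogorov's extension theorem (as invoked in Lemma \ref{lem:markov}) produces a Markov process $X$ with these kernels; measurability in $x$ is automatic from the linear dependence $x\mapsto\scal{\psi_s(t,u)}{x}$. By construction, $X$ is an affine Markov process and $\phi,\psi$ are precisely the associated Riccati solutions; infinite divisibility of the kernels is inherited from the construction. If the process is conservative, a c\`adl\`ag modification exists since $t\mapsto \widehat p_{s,t}(x,u)$ is c\`adl\`ag by the integral form \eqref{eq:unified-riccati} of the Riccati equations, so the finite-dimensional distributions are c\`adl\`ag in the usual sense and standard regularization arguments apply.

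For the semimartingale characteristics I would essentially reverse the computation of Lemma \ref{lem:characteristics}. Fix $x\in D$ and $(T,u)\in(0,\infty)\times\cU$; the process
\[
M^{u,T}_t = \exp\brac{\phi_t(T,u) + \scal{\psi_t(T,u)}{X_t}}, \qquad t\in[0,T],
\]
is a martingale by the Markov and affine property. Applying It\^o's formula to $M^{u,T}$ and using the integral Riccati equations to substitute for $d\phi_t(T,u)$ and $d\psi_t(T,u)$, the predictable finite variation part of $M^{u,T}/M^{u,T}_-$ is forced to vanish. Matching coefficients at $u$, using injectivity of the L\'evy--Khintchine parametrization and $\psi_t(t,u)=u$ to vary the test functional freely, one reads off that the characteristics of $X$ are exactly
\begin{align*}
B^c_t &= \int_0^t \Brac{\beta_0(s) + \sum_{i=1}^d X^i_{s-}\beta_i(s)} dA^c_s,\\
C_t &= \int_0^t \Brac{\alpha_0(s) + \sum_{i=1}^d X^i_{s-}\alpha_i(s)} dA^c_s,
\end{align*}
together with the analogous expressions for $\nu^c$ and the jump-compensator \eqref{eq:char_nu_dis} at $t\in J^A$, proving the final claim.

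The hard part will be the construction step in the first paragraph: verifying that $\widehat p_{s,t}(x,\cdot)$ is in fact a Fourier--Laplace transform of a $D$-supported probability measure, despite the lack of a time-homogeneous Feller semigroup (which is unavailable because $A$ may be discontinuous and the parameters only locally integrable). The delicate points are control of the total mass (to rule out defect from explosion, which is where conservativeness is needed for the semimartingale conclusion), tightness of the approximating convolution products on the cone $D$, and the interplay between the continuous-in-$A^c$ and the purely jump parts when refining the partition across points of $J^A$.
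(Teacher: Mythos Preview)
Your overall architecture---build transition kernels from the Riccati solutions, verify Chapman--Kolmogorov via the semiflow property, invoke Kolmogorov's extension theorem, then identify the characteristics by reversing Lemma~\ref{lem:characteristics}---matches the paper's route through Propositions~\ref{prop:ex-markov} and~\ref{prop:semimart}. The characteristics identification at the end is also essentially the same argument the paper gives.

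The substantive divergence, and the gap, is in how you propose to show that $\widehat p_{s,t}(x,\cdot)$ is the Fourier--Laplace transform of a $D$-supported (sub-)probability measure. Your time-discretization scheme relies on producing, on each continuity sub-interval of $A$, an infinitely divisible kernel ``by an Euler-type iteration (or local Feller semigroup)'' and appealing to the admissibility conditions of \cite{DuffieFilipovicSchachermayer}. This reduction does not go through: the parameters $\alpha,\beta,\mu$ are merely locally $A^c$-integrable, not continuous in $t$, so neither the time-homogeneous theory of \cite{DuffieFilipovicSchachermayer} nor the time-inhomogeneous result of \cite{Filipovic05} (which requires strongly admissible, i.e.\ continuous, parameters) applies on your continuity sub-intervals. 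An Euler step $\exp\bigl(F(t,u)\Delta A + \scal{R(t,u)\Delta A + u}{x}\bigr)$ is also not in general the transform of a $D$-valued law, so the building blocks you need are not available without further work.

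The paper sidesteps this entirely by working with the convex cone $\mathcal{C}$ of functions of the form \eqref{eq:convex-cone-definition} (borrowed from \cite[Sec.~7]{DuffieFilipovicSchachermayer}). The key fact is that $\phi\in\mathcal{C}$ if and only if $e^{\phi(\cdot)}$ is the transform of a sub-stochastic measure on $D$, and $\mathcal{C}$ is closed under composition and pointwise limits. Under the auxiliary restriction \eqref{eq:condition1-curly-c} one can rewrite the Riccati equation so that the Picard iterates manifestly stay in $\mathcal{C}^m$; a parameter-approximation argument (Step~3 of the proof of Proposition~\ref{prop:ex-markov}) removes the restriction, using continuous dependence of solutions on the right-hand side (Step~2). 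This is the missing idea: you never construct approximating \emph{processes}, only approximating Riccati right-hand sides, and the cone $\mathcal{C}$ does the work of guaranteeing that the limiting $\phi,\psi$ yield genuine kernels.

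A secondary point: your argument for the c\`adl\`ag semimartingale modification is too brief. The paper (Proposition~\ref{prop:semimart}) needs a linear-independence argument on the vectors $\psi^{\mathcal I}_t(T,e_i)$ for suitable $e_i$ to extract that $X^{\mathcal I}$ itself, and not just the martingales $M^{u,T}$, is a semimartingale; the $X^{\mathcal J}$ part then uses the explicit linear form \eqref{eq:psi-J-linear}.
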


The next result provides a sufficient condition for the conservativeness of $X$; further conditions can be developed along the lines of \cite[Lem.~9.2]{DuffieFilipovicSchachermayer}.

\begin{corollary}\label{cor:existence}
Let $X$ be an affine Markov process as in Theorem~\ref{thm:existence}. If, for any $T > 0$, 
$g\equiv 0$ is the only $\R^m_{\leq 0}$-valued solution to 
\begin{equation}
\frac{d g_t}{dA_t}= - \Re \mathfrak{R}^\mathcal{I}\brac{t,g_t},\quad g_T=0,
\label{eq:condition-conservative}
\end{equation}
then $X$ is conservative.
\end{corollary}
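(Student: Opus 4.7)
The plan is to prove conservativeness by showing $\phi_s(t,0)=0$ and $\psi_s(t,0)=0$ for all $0\le s\le t$; since the Markov process supplied by Theorem~\ref{thm:existence} has sub-probability transition kernels $p_{s,t}(x,\cdot)$ satisfying the affine transform formula, this identity at $u=0$ together with monotone convergence of $\int_D e^{\langle u,\xi\rangle}p_{s,t}(x,d\xi)$ as $u\uparrow 0$ forces $p_{s,t}(x,D)=1$. Fix $T>0$ and, for $\epsilon>0$, set $u_\epsilon:=(-\epsilon\mathbf{1}_{\mathcal I},0)\in\mathcal U^\circ$. Proposition~\ref{prop:ex-riccati} yields a unique Riccati solution on $[0,T]$, and Proposition~\ref{prop:properties-phi-psi}(ii) gives $\psi^{\mathcal J}(\cdot,T,u_\epsilon)\equiv 0$. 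Writing $g^\epsilon_t:=\psi^{\mathcal I}_t(T,u_\epsilon)$, reality of the parameters yields $g^\epsilon$ real-valued, and the admissibility constraints on $\bar\beta_{\mathcal I(\mathcal I\setminus i)}$, $\alpha_{i;\mathcal{II}}$, $\mu_i$ encode the inward-pointing condition at $\partial\R^m_{\le 0}$ as in \cite{DuffieFilipovicSchachermayer}, so $g^\epsilon_t\in\R^m_{\le 0}$ and
\[
g^\epsilon_t \;=\; -\epsilon\mathbf{1}_{\mathcal I} + \int_{(t,T]} \mathfrak R^{\mathcal I}\!\bigl(s,(g^\epsilon_s,0)\bigr)\,dA_s.
\]

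To extract a limit I exploit monotonicity: for real $u\le u'$ componentwise in $\R^d_{\le 0}$ one has $e^{\langle u,\xi\rangle}\le e^{\langle u',\xi\rangle}$ for every $\xi\in D$, so the affine transform formula gives $\phi_s(t,u)+\langle\psi_s(t,u),x\rangle\le\phi_s(t,u')+\langle\psi_s(t,u'),x\rangle$ for all $x\in D$; since $D$ has full linear span this forces $\psi^{\mathcal I}_s(t,u)\le\psi^{\mathcal I}_s(t,u')$ componentwise. Hence $g^\epsilon_t$ is nondecreasing in $\epsilon\downarrow 0$ and bounded above by $0$, so $g_t:=\lim_{\epsilon\downarrow 0} g^\epsilon_t\in\R^m_{\le 0}$ exists pointwise. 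The uniform bound on $g^\epsilon$ combined with the $A$-integrability of the enhanced parameters (Assumption~\ref{a:integrable}) produces an $A$-integrable majorant for $\mathfrak R^{\mathcal I}(s,(g^\epsilon_s,0))$, and continuity of $\mathfrak R^{\mathcal I}$ in its second argument lets dominated convergence pass the limit through the integral (including at atoms of $A$):
\[
g_t \;=\; \int_{(t,T]} \mathfrak R^{\mathcal I}\!\bigl(s,(g_s,0)\bigr)\,dA_s,\qquad g_T=0.
\]
Because $(g_s,0)$ is real, $\mathfrak R^{\mathcal I}(s,(g_s,0))=\Re\mathfrak R^{\mathcal I}(s,g_s)$, so $g$ is an $\R^m_{\le 0}$-valued solution of \eqref{eq:condition-conservative}.

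By hypothesis $g\equiv 0$, whence $\psi^{\mathcal I}(\cdot,T,u_\epsilon)\to 0$ and together with $\psi^{\mathcal J}(\cdot,T,u_\epsilon)\equiv 0$ we obtain $\psi_t(T,0)=0$; substituting in \eqref{eq:unified-riccati-phi} and using $\mathfrak F(s,0)=0$ (a L\'evy--Khintchine function vanishes at $0$) gives $\phi_t(T,0)=0$. Monotone convergence in the affine transform formula along $u_\epsilon\uparrow 0$ then produces $p_{s,T}(x,D)=\exp\!\bigl(\phi_s(T,0)+\langle\psi_s(T,0),x\rangle\bigr)=1$ for every $0\le s\le T$ and $x\in D$; as $T$ was arbitrary, $X$ is conservative. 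The principal obstacle is the limit transfer in the measure Riccati equation at atoms of $A$, where the integrand can jump; the monotonicity in $\epsilon$ upgrades the pointwise limit to a bounded one and, combined with the $A$-integrable envelope on the L\'evy--Khintchine parameters of $\mathfrak R^{\mathcal I}$, legitimises the interchange.
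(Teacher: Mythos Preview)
Your argument is correct, but it is considerably more elaborate than the paper's. The paper simply evaluates the affine transform formula \eqref{eq:Markov} directly at $u=0$: this gives $p_{t,T}(x,D)=\exp\bigl(\phi_t(T,0)+\langle\psi_t(T,0),x\rangle\bigr)$, and from $p_{t,T}(x,D)\le 1$ together with $D=\R^m_{\ge 0}\times\R^n$ one reads off $\phi_t(T,0)\le 0$, $\psi^{\mathcal J}_t(T,0)=0$, $\psi^{\mathcal I}_t(T,0)\le 0$. Setting $g(t):=\psi^{\mathcal I}_t(T,0)$, the measure Riccati equation \eqref{eq:unified-riccati-psi} at $u=0$ is precisely \eqref{eq:condition-conservative}; uniqueness forces $g\equiv 0$, then \eqref{eq:unified-riccati-phi} with $\mathfrak F(s,0)=0$ gives $\phi_t(T,0)=0$, hence $p_{t,T}(x,D)=1$.

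The substantive difference is that the paper treats $\psi_t(T,0)$ as already defined and already a solution of the Riccati equation, whereas Proposition~\ref{prop:ex-riccati} literally only furnishes solutions for $u\in\mathcal U^\circ$. The paper is tacitly relying on the fact (established in the proof of Proposition~\ref{prop:ex-markov}) that $\psi_s(t,\cdot)\in\mathcal C^d$, so that $\psi$ extends continuously to all of $\mathcal U$ and the integral equation \eqref{eq:unified-riccati-psi} passes to the boundary. Your approach makes this extension explicit: you approximate $0$ by $u_\epsilon\in\mathcal U^\circ$, use the order structure of $D$ to obtain monotonicity of $g^\epsilon$ in $\epsilon$, and pass to the limit in the integral equation by dominated convergence. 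What you gain is a self-contained justification of the boundary evaluation; what the paper gains is brevity, since the $\mathcal C$-property already encodes the continuous extension. Both routes land on the same $\R^m_{\le 0}$-valued solution $g$ of \eqref{eq:condition-conservative} and invoke the uniqueness hypothesis in the same way.
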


Theorem~\ref{thm:existence} follows almost entirely from the next two Propositions:

\begin{proposition}
\label{prop:ex-markov}
Let the assumptions of Theorem~\ref{thm:existence} hold true and let $\brac{\phi,\psi}$ be solutions to \eqref{ric1}-\eqref{eq:initial-cond} with admissible parameters. Then there exists an affine Markov process $X$, unique in law, with state space $D$ and whose transition kernels satisfy the affine property \eqref{eq:Markov} with exponents $\phi$ and $\psi$. 
\end{proposition}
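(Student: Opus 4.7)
The plan is to define candidate transition kernels from the solutions $(\phi,\psi)$ of the Riccati equations, verify that they are genuine probability kernels on $D$ with the affine Fourier--Laplace transform \eqref{eq:Markov}, check that they satisfy Chapman--Kolmogorov, and then apply the Kolmogorov extension theorem exactly as in the proof of Lemma~\ref{lem:markov}. Uniqueness in law then follows from the fact that the finite-dimensional distributions of any Markov process are completely determined by an initial distribution and its transition kernels.

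Concretely, for each $0\le s\le t$ and $x\in D$, I would declare $p_{s,t}(x,\cdot)$ to be the Borel measure on $D$ whose Fourier--Laplace transform at $u\in\cU$ equals $\exp\bigl(\phi_s(t,u)+\langle \psi_s(t,u),x\rangle\bigr)$. The core technical task is to show that such a measure exists and is a probability measure on $D$. I would separate the two sources of motion carried by the parameter set. On any interval $(s,t]$ that is disjoint from $J^A$, the construction reduces, after reparametrisation by $A^c$, to the stochastically continuous time-inhomogeneous affine case of \cite{Filipovic05}: admissibility of $(\alpha,\beta,\mu)$ in the sense of Definition~\ref{def:admissible}(i) together with Assumption~\ref{a:integrable} delivers, via the standard compound-Poisson truncation and L\'evy continuity argument of \cite[Sec.~7]{DuffieFilipovicSchachermayer}, a family of infinitely divisible transition kernels on $D$ with the prescribed exponents $(\phi_s(t,\cdot),\psi_s(t,\cdot))$. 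At each fixed $\tau\in J^A$ the jump kernel $p_{\tau-,\tau}(x,\cdot)$ is instead defined directly by admissibility condition (ii) (or (ii$'$) in the infinitely divisible case together with the L\'evy--Khintchine representation of $\gamma$ in Lemma~\ref{lem:id_jumps}), which guarantees that $u\mapsto \exp(\gamma_0(\tau,u)+\langle\bar\gamma(\tau,u)+u,x\rangle)$ is the Fourier--Laplace transform of a $D$-valued law.

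One then composes these ingredients along any grid $s=t_0<t_1<\dots<t_N=t$ that includes all jump points of $A$ inside $(s,t]$ and uses Proposition~\ref{prop:properties-phi-psi}(iii) (the semi-flow identity) to verify that the resulting measure equals the one prescribed by $(\phi_s(t,\cdot),\psi_s(t,\cdot))$ independently of the chosen subdivision. This simultaneously establishes well-definedness and the Chapman--Kolmogorov relation
\[
p_{s,t}(x,B) \;=\; \int_D p_{r,t}(y,B)\,p_{s,r}(x,dy),\qquad 0\le s\le r\le t,\ x\in D,
\]
since on the Fourier--Laplace side this is precisely the semi-flow equation \eqref{eq:flow}. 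Measurability of $x\mapsto p_{s,t}(x,B)$ follows from continuity in $x$ of the exponential affine Fourier--Laplace transform, together with a standard monotone class argument on $\cB(D)$.

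Having a consistent family of transition probabilities on $D$, I would finally invoke the Kolmogorov extension theorem as in \cite[Thm.~8.4]{Kallenberg2002} (this is the same step used in Lemma~\ref{lem:markov}) to obtain, for every $x\in D$, a Markov process $(X,\mathbb{P}^x)$ with state space $D$, initial value $x$, and transition kernels $p_{s,t}$. By construction the affine identity \eqref{eq:Markov} holds with the given $(\phi,\psi)$, so $X$ is an affine Markov process in the sense of Definition~\ref{def:affine_markov}. Uniqueness in law is immediate: any other $D$-valued Markov process with the same initial law and the same affine transition kernels has the same finite-dimensional distributions. The main obstacle in the above scheme is the first existence step: showing that the continuous-time exponents actually correspond to probability measures on the canonical state space $D=\mathbb{R}^m_{\ge 0}\times\mathbb{R}^n$. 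This requires transferring the approximation/truncation and positivity arguments of \cite{DuffieFilipovicSchachermayer,Filipovic05} to the measure-differential setting driven by the possibly singular continuous function $A^c$, which is where Assumption~\ref{a:integrable} and the admissibility constraints on the boundary components $i\in\cI$ are essential.
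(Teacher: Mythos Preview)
Your plan diverges from the paper's proof and, as written, has a structural gap. The paper does \emph{not} decompose $(s,t]$ into jump-free pieces and fixed jump times; instead it shows directly that $(\phi_s(t,\cdot),\psi_s(t,\cdot))\in\mathcal{C}\times\mathcal{C}^d$ for all $s\le t$ by (a) proving this first under the extra structural restriction \eqref{eq:condition1-curly-c} via a Picard iteration whose iterates stay in $\mathcal{C}^m$ thanks to the closure properties of $\mathcal{C}$ from \cite[Prop.~7.2]{DuffieFilipovicSchachermayer}, (b) approximating a general admissible $\mathfrak{R}$ by such restricted $\mathfrak{R}_k$ in $L^1(dA)\times(\text{uoc on }\cU)$, and (c) proving continuous dependence of the solution of \eqref{eq:unified-riccati-psi} on its right-hand side. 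Only after this is established does the paper invoke \eqref{eq:substoch}, the semiflow property, and Kolmogorov's theorem.

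The gap in your scheme is the grid step: you choose a partition $s=t_0<\dots<t_N=t$ ``that includes all jump points of $A$ inside $(s,t]$''. Nothing in Definition~\ref{def:good-parameter} forces $J^A\cap(s,t]$ to be finite; $A$ is merely c\`adl\`ag increasing, so $J^A$ may be countably infinite and even dense. Your composition along a finite grid therefore does not cover the general case, and you would need a genuine limiting argument (e.g.\ exhausting the jumps and controlling the residual) that you have not supplied. A second, smaller issue is the claimed reduction of a jump-free interval to \cite{Filipovic05} ``after reparametrisation by $A^c$'': Filipovi\'c's existence results assume right-continuity (indeed, regularity) of the parameters in $t$, whereas here $(\alpha,\beta,\mu)$ are only locally $A^c$-integrable. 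That gap is exactly what the paper's three-step approximation is designed to close, and it does so uniformly over the whole measure $dA$ rather than piecewise. If you want to keep your decomposition idea, you would still need the paper's $\mathfrak{R}_k\to\mathfrak{R}$ approximation and the continuous-dependence estimate to pass to the limit over infinitely many jump times.
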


For the proof of this proposition we introduce the following notation (see \cite[Sec.~7]{DuffieFilipovicSchachermayer}). Let $\mathcal{C}$ denote the convex cone of functions $\phi\colon \mathcal{U}\rightarrow\mathbb{C}_{\le 0}$ of the form
\begin{equation}
\phi\brac{u}=\scal{Aw}{w}+\scal{B}{u} -C + \int_{D\setminus\set{0}} \brac{e^{\scal{u}{\xi}}-1-\scal{w}{h_{\mathcal{J}}\brac{\xi}}}M\brac{d\xi}
\label{eq:convex-cone-definition}
\end{equation}
for $u\coloneqq\brac{v,w}\in\mathcal{U}$, where $A\in \cS_+^d$, $B\in D$, $C \in \Rplus$ and $M(d\xi)$ is a nonnegative Borel measure on $D\setminus\set{0}$ integrating $\scal{\mathbf{1}}{h_{\mathcal{I}}\brac{\xi}}+\Norm{h_{\mathcal{J}}\brac{\xi}}^2$. We denote by $\mathcal{C}^m$ the $m$-fold cartesian product of $\mathcal{C}$. Recall from \cite[Lemma~7.1]{DuffieFilipovicSchachermayer}, that $\phi \in \mathcal{C}$ if and only if there exists a sub-stochastic measure $\eta$ on $D$ such that
\begin{equation}\label{eq:substoch}
\int_D {e^{\scal{\xi}{u}}} \eta(d\xi) = e^{\phi(u)}, \qquad \forall\,u \in \cU.
\end{equation}

\begin{proof}
The proof splits into four steps. First, we show, under some restrictions on the form of $\mathfrak{F},$ and $\mathfrak{R}$, that the solutions $(\phi,\psi)$ of the generalized measure Riccati equations are in $\mathcal{C} \times  \mathcal{C}^d$, which follows similar to Proposition 7.4 (ii) in \cite{DuffieFilipovicSchachermayer}. In concrete terms, suppose that, for all $i\in \mathcal{I}$,
\begin{gather}
\int_{D\setminus \set{0}}h_i\brac{\xi} \mu_i\brac{d\xi}<\infty
\nonumber
\\
\alpha_{i,ik}=\alpha_{i,ki}=0,\mbox{ for all }k\in \mathcal{J}
\label{eq:condition1-curly-c}
\end{gather}
In this case $\mathfrak{R}^{\mathcal{I}}$ can be written in the form 
\begin{equation*}
\mathfrak{R}_i^{\mathcal{I}}(t,u)=\tilde{\mathfrak{R}}_i^{\mathcal{I}}(t,u)-c_i(t) v_i,\quad i\in\mathcal{I}
\end{equation*}
with $\tilde{\mathfrak{R}}_i\in\mathcal{C}$, $c_i \ge 0$ $dA$-a.e. and $c_i(t) \Delta_t A \le 1$. Therefore, the generalized measure Riccati equation \eqref{eq:unified-riccati-psi} is equivalent to the following equation:
\[\psi^i_{s}\brac{t,u} = v_i \,\cE_s^t(-c_i dA) + \int_s^t \cE_r^s(-c_i dA) \tilde{\mathfrak{R}}\brac{r,\psi_r\brac{t,u}}dA_r,\quad i\in\mathcal{I},\]
where 
\[\cE_s^t(-c_i dA) = \exp\left(-\int_s^ t c_i(r)dA^c_r\right) \prod_{r \in (s,t]} (1 - c_i(r) \Delta A_r)\]
is the solution to the linear measure differential equation $\tfrac{dg_t}{dA_t} = c_i(t) g_t$, see Example~\ref{ex:linear-equation}.
Define the iterative sequence 
\begin{eqnarray*}
{}^{\brac{0}}\psi^{i}_s\brac{t,u} &=& v_i,
\\
{}^{\brac{k+1}}\psi^{i}_s\brac{t,u} &=& v_i \,\cE_s^t (-c_i dA) + \int_s^t \cE_r^s(-c_i dA) \tilde{\mathfrak{R}_i}\brac{r,{}^{\brac{k}}\psi_r^{\mathcal{I}}\brac{t,u},\psi_s^{\mathcal{J}}\brac{t,u}}dA_r.
\end{eqnarray*}
By Banachs fixed point theorem and Helly's selection principle 
there is a subsequence of $\brac{{}^k\psi^{\mathcal {I}}}_{k\in\mathbb{N}}$ that converges pointwise to the solution $\psi^{\mathcal{I}}$ of \eqref{eq:unified-riccati-psi}. By Proposition 7.2 in \cite{DuffieFilipovicSchachermayer} $\mathcal{C}^m$ is stable under composition and pointwise limits and we conclude that $\psi^{\mathcal{I}}_s\brac{t,\cdot}\in\mathcal{C}^m$. The assertion $\psi^{\mathcal{J}}_s\brac{t,\cdot}\in\mathcal{C}^n$ follows directly from \eqref{eq:psi-J-linear}. Since $\mathfrak{F}$ is in $\mathcal{C}$ 
also $\phi_s\brac{t,\cdot}$ is in $\mathcal{C}$, cf. \cite[Prop.~7.2]{DuffieFilipovicSchachermayer}.

Second, we prepare for the approximation argument of part three and establish continuous dependence of a solution to the generalized measure Riccati equations on the right hand side, i.e. convergence in $L^1\brac{dA}\times(\mbox{uoc. on }\mathcal{U})$ of the right hand side implies convergence of the solution in $(dA-\mbox{a.e.})\times(\mbox{uoc. on }\mathcal{U})$.
Here and in the following, `uoc. on $\cU$' means uniformly on compact subsets of $\cU$. Indeed, let $K\subseteq \mathcal{U}$ compact and $\mathfrak{R}, \tilde{\mathfrak{R}}$ with good, admissible and $A$-integrable parameters, such that 
\begin{equation}
\Norm{\sup_{u\in K} \left(\mathfrak{R}\brac{\cdot,u} - \tilde{\mathfrak{R}}\brac{\cdot,u}\right)}_{L^1\brac{dA}}\leq \delta.
\end{equation}
Denote the solution corresponding to $\tilde{\mathfrak{R}}$ by $\tilde{\psi}$ and examine the difference with $\psi$:
\begin{eqnarray*}
\norm{\psi_t\brac{T,u}-\tilde{\psi}_t\brac{T,u}} &\leq& \int_t^T\norm{\mathfrak{R}\brac{\vphantom{\tilde{\psi}}s,\psi_s\brac{T,u}}-\tilde{\mathfrak{R}}\brac{s,\tilde{\psi}_s\brac{T,u}}}dA_s
\\
&\leq&\int_t^T \norm{\mathfrak{R}\brac{\vphantom{\tilde{\psi}}s,\psi_s\brac{T,u}}-\mathfrak{R}\brac{s,\tilde{\psi}_s\brac{T,u}}}dA_s
\\
&&+\int_t^T \norm{{\mathfrak{R}}\brac{s,\tilde{\psi}_s\brac{T,u}}-\tilde{\mathfrak{R}}\brac{s,\tilde{\psi}_s\brac{T,u}}}dA_s.
\end{eqnarray*}
If $\tilde{\psi}$ stays in $K$ we can estimate the second summand by $\delta$ and obtain with Proposition~\ref{prop:properties-phi-psi}(iv) in conjunction with the local Lipschitz-continuity of $\mathfrak{R}$ (with $A$-integrable Lipschitz constant) that 
\begin{equation}
\norm{\psi_t\brac{T,u}-\tilde{\psi}_t\brac{T,u}}\leq \delta+\int_t^T L_s\norm{\psi_s\brac{T,u}-\tilde{\psi}_s\brac{T,u}}dA_s.
\end{equation}
By Gronwalls lemma for Stieltjes differential equation (c.f. \cite[Corollary 19.3.3]{Gil2007}) the difference satisfies
\begin{equation}
\norm{\psi_t\brac{T,u}-\tilde{\psi}_t\brac{T,u}}\leq \delta \exp\brac{\int_t^T L_s dA_s}.
\label{eq:Gronwall-Result-ineq}
\end{equation}
Now suppose
\begin{equation}
\tau=\sup\left\{t\in\brak{0,T}\colon \norm{\psi_t\brac{T,u}-\tilde{\psi}_t\brac{T,u}}>\alpha\right\}>0
\end{equation}
This implies that the difference of $\psi$ and $\tilde \psi$ is less than $\alpha$ for all $t\in\Brak{\tau,T}$ due to the common terminal value of $\psi$ and $\tilde{\psi}$ and the continuity from the right. By \eqref{eq:Gronwall-Result-ineq} we can choose $\delta$ small enough, such that $\norm{\psi_t\brac{T,u}-\tilde{\psi}_t\brac{T,u}}\leq \frac{\alpha}{L_\tau\Delta A_\tau+1} \le \alpha $. Therefore $\tilde{\psi}$ can not leave the $\alpha$-neighborhood continuously, but only by a jump. However, $\psi$ satisfies 
$$\Delta \psi_t\brac{T,u}=\mathfrak{R}\brac{t,\psi_t\brac{T,u}}\Delta A_t $$
at points of discontinuity (similarly for $\tilde \psi$) from which it follows that $\norm{\psi_{\tau-}\brac{T,u}-\tilde{\psi}_{\tau-}\brac{T,u}} < \alpha$ - a contradiction. This proves the continuous dependence on the right hand side.

Third, we show an analogue of \cite[Lemma 5.7]{Filipovic05}, i.e.~that there exists a sequence $\brac{\mathfrak{R}_k}_{k\in\mathbb{N}}$ of functions of L\'evy Khintchine form with admissible parameters satisfying Assumption~\ref{a:integrable} and conditions \eqref{eq:condition1-curly-c}, converging to $\mathfrak{R}$ in $(L^1\brac{dA})\times(\mbox{uoc.on }\mathcal{U})$.

The construction of the sequence $\brac{\mathfrak{R}_k}_{k\in\mathbb{N}}$ of functions satisfying \eqref{eq:condition1-curly-c} is the same as in the proof of \cite[Lemma 5.7]{Filipovic05} or \cite{DuffieFilipovicSchachermayer} p. 33. Only the mode of convergence has been strengthened to convergence in $L^1(dA)\times (\mbox{uoc.on }\mathcal{U})$. From \cite{DuffieFilipovicSchachermayer} p. 33 we obtain, for any $i \in \mathcal{I}$, the identity

\begin{equation} \tilde{\mathfrak{R}}^i_k\brac{t,u}-\mathfrak{R}^i\brac{t,u}=\frac{2}{p_i^\ast\brac{t}}\brac{h_u\brac{\frac{\xi^\ast\brac{t}}{k}} - \frac{1}{2}\scal{Q(t)u_{\mathcal{J} \cup i}}{u_{\mathcal{J} \cup i}}},
\label{eq:difference-R-R} 
\end{equation}
where
$$ p^\ast\brac{t} = \frac{\alpha^i_{ii}\brac{t}}{\Norm{\alpha^i_{i\mathcal{J} \cup i}\brac{t}}^2},\quad \xi^\ast(t)_{\mathcal{I}\setminus i}=0,\;\xi(t)_{\mathcal{J} \cup i}^\ast = \frac{\alpha^i_{i\mathcal{J} \cup i}\brac{t}}{\Norm{\alpha^i_{i\mathcal{J} \cup i}\brac{t}}},  \quad Q(t)_{kl}\coloneqq p^\ast \frac{\alpha^i_{ki}\alpha^i_{il}}{\alpha^i_{ii}}$$
$$h_u\brac{\xi}=\brac{e^{\scal{u}{\xi}}-1-\scal{u_{\mathcal{J} \cup i}}{h_{\mathcal{J} \cup i}(\xi)}} / \left(\scal{\mathbf{1}}{h_{\mathcal{I} \setminus i}\brac{\xi}}+\Norm{h_{\mathcal{J} \cup i}\brac{\xi}}\right)$$
We can simplify the expressions in \eqref{eq:difference-R-R} to
\begin{equation*}
\frac{1}{2}\scal{Q(t)u_{\mathcal{J} \cup i}}{u_{\mathcal{J} \cup i}} = 2\sum_{l,m\in \mathcal{J} \cup i} u_l \frac{\alpha^i_{li}(t)\alpha^i_{im}(t)}{\alpha^i_{ii}(t)}u_m.
\end{equation*}
Using the properties of the truncation functions and $\Norm{\xi^\ast}=1$, we obtain for large enough $k$ that
\begin{equation*}
\frac{2}{p^\ast\brac{t}}h_u\brac{\frac{\xi^\ast\brac{t}}{k}}\leq C\frac{1}{p^\ast\brac{t}}\brac{1+\Norm{u_{\mathcal{J} \cup i}}^2\Norm{\xi^\ast}^2}\leq C\brac{1+\Norm{u_{\mathcal{J} \cup i}}^2}\frac{\Norm{\alpha^i_{i\mathcal{J} \cup i}\brac{t}}^2}{\alpha^i_{ii}\brac{t}}
\end{equation*}
where $C$ does not depend on $u$ or $\xi^\ast$. Integrability of the above quantities w.r.t. $A$ follows from the positive semi-definiteness of $\alpha(t)$ and the Cauchy-Schwartz inequality. This implies convergence of $\tilde{\mathfrak{R}}_k$ to $\mathfrak{R}$ in $(L^1(dA))\times\brac{\mbox{uoc.on } \mathcal{U}}$ due to the construction of $\tilde {\mathfrak{R}}$.

Finally, we come to the last step.  From \eqref{eq:substoch} it now follows, that for every $\brac{t,x}\in\brak{0,T}\times D$ and $s\in\brak{0,t}$, there exists a unique, sub-stochastic measure $p_{s,t}\brac{s,\cdot}$ on $D$ with 
\begin{equation}
\int_D e^{\scal{u}{\xi}}p_{s,t}\brac{x,d\xi}=e^{\phi\brac{s,t,u}+\scal{\psi\brac{s,t,u}}{x}},\quad \forall u\in\mathcal{U}.
\label{eq:stochastic-measure}
\end{equation}
The semiflow property of $\brac{\phi,\psi}$ ensures that the family of measures $\brac{p_{s,t}}_{s\leq t\in\brak{0,T}}$ satisfies the Chapman-Kolmogorov equations. By the Kolmogorov existence theorem (see~\cite[Theorem 8.4]{Kallenberg2002}), there exists a $D$-valued Markov process $X$ on $\brak{0,T}$, unique in law, with transition kernels $\brac{p_{s,t}}_{s\leq t\in\brak{0,T}}$. By definition, $X$ satisfies the affine property \eqref{eq:Markov} for all $u\in\mathcal{U}$.
\end{proof}

\begin{proposition}
\label{prop:semimart}Let $X$ be the affine Markov process from Proposition~\ref{prop:ex-markov} started at some $X_0 = x \in D$. If $X$ is conservative, then there is a modification of X which is a c\`adl\`ag affine semimartingale.
\end{proposition}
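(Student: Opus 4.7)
The plan is to proceed in three stages: (i) construct a c\`adl\`ag modification $\widetilde X$ of the Markov process $X$ from Proposition~\ref{prop:ex-markov}; (ii) show that $\widetilde X$ is a semimartingale; (iii) identify its characteristics with the right-hand sides of \eqref{char:affine}. The affine property \eqref{cond:affine} required by Definition~\ref{def:affine} follows once (i) is established, since it merely reformulates the Markov identity \eqref{eq:Markov} in the presence of conditional expectations along the filtration generated by $X$.

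For stage (i) I exploit the martingale structure encoded in the affine property. Fix $T>0$; for each $u\in\cU$ the process
\[
M^{T,u}_t := E\bigl[e^{\scal{u}{X_T}}\bigm|\cF_t\bigr] = \exp\bigl(\phi_t(T,u) + \scal{\psi_t(T,u)}{X_t}\bigr),\qquad t\in[0,T],
\]
is a bounded martingale and thus admits a c\`adl\`ag modification $\widetilde M^{T,u}$. Selecting countable dense subsets $\cE\subset\cU$ and $\cT\subset\Rplus$, we may arrange these modifications to be simultaneously c\`adl\`ag on a single full-measure event $\Omega_0$. Combined with the deterministic c\`adl\`ag regularity of $\phi,\psi$ in $t$ from Proposition~\ref{prop:properties-phi-psi}(i) and the terminal conditions $\phi_T(T,u)=0$, $\psi_T(T,u)=u$ (specialising along a sequence $T_n\downarrow t$ in $\cT$), one extracts c\`adl\`ag trajectories of $t\mapsto e^{\scal{u}{X_t}}$ on $\Omega_0$ for each $u\in\cE$; Fourier uniqueness across the dense $\cE$ then yields a c\`adl\`ag modification $\widetilde X$ of $X$.

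For stage (ii), take $u=(-\lambda,0)\in\cU^\circ$ with $\lambda\in\R^m_{>0}$, so that $\widetilde M^{T,u}>0$ is bounded and $\log\widetilde M^{T,u}$ is a real semimartingale by It\^o's formula. Proposition~\ref{prop:properties-phi-psi}(ii) gives
\[
\log\widetilde M^{T,u}_t = \phi_t(T,u) + \scal{\psi^{\cI}_t(T,u)}{\widetilde X^{\cI}_t},
\]
and since $\phi_t(T,u)$ is deterministic of finite variation, $t\mapsto\scal{\psi^{\cI}_t(T,u)}{\widetilde X^{\cI}_t}$ is a semimartingale. Varying $\lambda$ over small perturbations of a basis of $\R^m$ and using $\psi_T(T,u)=u$ for $T$ close to $t$ yields the semimartingale property of $\widetilde X^{\cI}$. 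The $\cJ$-coordinates are recovered by choosing $u=(-\lambda,iy)$ with $y$ running through a basis of $\R^n$ and isolating the phase of $\widetilde M^{T,u}$ via the continuous branch of the complex logarithm, which is well-defined since $|\widetilde M^{T,u}|\ge e^{-C}>0$ uniformly along each path.

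Stage (iii) then mimics the proof of Lemma~\ref{lem:characteristics}: applying the semimartingale It\^o formula to $\widetilde M^{T,u}$ and using the martingale property to annihilate the predictable finite-variation part produces identity \eqref{eq:measure_identity} between $d\phi^c_t(T,u)+\scal{\widetilde X_t}{d\psi^c_t(T,u)}$ and the measure $G(dt;T,u)$ built from the characteristics $(B,C,\nu)$ of $\widetilde X$, together with the corresponding jump identity. Substituting the measure Riccati equations \eqref{eq:unified-riccati} and matching the L\'evy--Khintchine-form coefficients $\mathfrak F,\mathfrak R$ of \eqref{eq:FR_frak} on a dense countable subset of $u$ (using uniqueness of the L\'evy--Khintchine triplet together with $\psi_T(T,u)=u$ in the limit $T\downarrow t$, in direct parallel to the proof of Theorem~\ref{prop1}) produces the affine decomposition \eqref{char:affine}. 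The chief obstacle is stage (i): organising a single null-set for the uncountable family $\{M^{T,u}\}$ and reconstructing the vector-valued c\`adl\`ag process $\widetilde X$ from its scalar characteristic-function projections. Once that is in place, stages (ii) and (iii) are fairly mechanical combinations of It\^o's formula with the uniqueness results already built into the Riccati equations.
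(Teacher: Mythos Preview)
Your proposal is correct and follows essentially the same route as the paper: c\`adl\`ag modifications of the bounded martingales $M^{T,u}$, logarithms to extract $\scal{\psi_t(T,u)}{X_t}$ as a semimartingale, and linear independence of $\psi^{\cI}_t(T,e_i)$ near $t=T$ (together with the linearity of $\psi^{\cJ}$ in $w$) to recover the coordinates of $X$ as c\`adl\`ag semimartingales via a covering argument. Two minor remarks: stage~(iii) is not part of this proposition's claim---the identification of the characteristics with \eqref{char:affine} is carried out separately in the proof of Theorem~\ref{thm:existence}---and the paper merges your stages~(i) and~(ii) rather than first constructing a c\`adl\`ag $\widetilde X$ via ``Fourier uniqueness'' (which at path level is really the same log-and-invert step you describe in stage~(ii)).
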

\begin{proof}
Let $X$ be the affine Markov process and $(\cF_t)_{t \geq 0}$ its natural filtration. From \eqref{eq:Markov}, we have that
\begin{equation}
M_t^{T,u} := \E\brak{\left. e^{\scal{u}{{X}_T}}\right\vert \cF_t}=e^{\phi_t\brac{T,u}+\scal{\psi_t\brac{T,u}}{{X}_t}},
\end{equation}
which must be a martingale for all $u \in \cU$. Since $\phi$ and $\psi$ are right-continuous in $T$ and c\`adl\`ag in $t$, applying this identity with $t=0$ shows that  $X$ (and therefore also every $M^{T,u}$) is right-continuous in probability. It follows that the martingale $M^{T,u}$ has a c\`adl\`ag modification. Let $u=\brac{v,w}$. By equation \eqref{eq:psi-J-linear} $\psi_t^{\mathcal{J}}\brac{T,\brac{v,0}}=0$ for all $t<T$ and hence $\langle\psi^{\mathcal{I}}_t\brac{T,(v,0)}, X_t^{\mathcal{I}} \rangle$ are c\`adl\`ag semimartingales for $v\in\R^m_-$ on $[0,T]$. For some linearly independent vectors $e_1,\dots,e_m$ in $R^m_{\le 0}$ we can find $s\le T$ such that $\psi^{\mathcal{I}}_t\brac{T,e_1},\dots,\psi^{\mathcal{I}}_t\brac{T,e_m}$ are linearly independent for all $t \in (s,T]$. Thus $X^{\mathcal{I}}$ is a semimartingale on $(s,T]$. This can be done for arbitrary $T$ which allows to infer with a covering argument (and right-continuity at $t=0$), that $X^{\mathcal{I}}$ is a semimartingale on $\Rplus$.

For the real valued part $X^\mathcal{J}$ of the process we use that, for all $u=\brac{v,w}\in\mathcal{U}^{\circ}$, the equation for $\psi^\mathcal{J}$ reduces to a linear equation with solution $\psi_t^\mathcal{J}\brac{T,u}=w\psi_t^\mathcal{J}(T)$ (see equation \eqref{eq:psi-J-linear}). By the same argument as in \cite[Proof of Theorem 2.12]{DuffieFilipovicSchachermayer} it follows that also $X^\mathcal{J}$, is a c\`adl\`ag semimartingale.
\end{proof}

We complete the proof of Theorem~\ref{thm:existence} and Corollary \ref{cor:existence}.
\begin{proof}
In light of Propositions~\ref{prop:ex-markov} and \ref{prop:semimart} it only remains to show that the semimartingale triplet of $X$ is given by \eqref{char:affine} with the same parameters that were used for the construction of  $X$. To this end, we apply Lemma \ref{lem:characteristics} to $X$, and get, similar to equation \eqref{eq:phi_to_G},
\begin{equation*}
\Theta_{t}(\omega) \cdot 
\begin{pmatrix} F\brac{t,\psi_t\brac{T,u}}\\ R^1\brac{t,\psi_t\brac{T,u}} \\ \vdots \\ R^d\brac{t,\psi_t\brac{T,u}}
\end{pmatrix}dA^c_t
=\begin{pmatrix} G_{0}(dt;\omega,T,u) \\ \vdots \\ G_{d}(dt;\omega,T,u),
\end{pmatrix}
\end{equation*}
where $F, R$ on the left hand side contain the parameters $(A,\beta, \alpha, \mu)$ and $G$ the semimartingale characteristics of $X$ (cf. \eqref{eq:G}). We proceed as in the proof of Theorem \ref{prop1} by taking the union over a countable, dense subset $\cT\times\cE$ of $\R_{\geq 0}\times\cU$ and considering the right limits $T\downarrow t$ in the countable set $\cT$. Using $\psi_t\brac{t,u}=u$ and the fact that functions of L\'evy--Khintchine-form determine their parameter triplets uniquely, we derive the continuous part of \eqref{char:affine} . The equation for $\nu$ at jump points follows from Lemmata~ \ref{lem:affinejumps} and \ref{lem:id_jumps}, completing the proof of Theorem~\ref{thm:existence}.

For the proof of Corollary~\ref{cor:existence}, evaluating \eqref{eq:Markov} at $u = 0$ yields
\begin{equation}\label{eq:kernel_conservative}
p_{t,T}(x,D) = \exp\left(\phi_t(T,0) + \scal{\psi_t(T,0)}{x}\right)
\end{equation}
for all $0 \le t \le T$ and $x \in D$. Taking into account that $p_{t,T}(x,D) \le 1$ and that $D = \Rplus^m \times \RR^n$, we see that $\phi_t(T,0) \le 0$, $\psi^\cI_t(T,0) \le 0$ and $\psi^\cJ_t(T,0) = 0$. Writing $g(t) := \psi^\cI_t(T,0)$ the measure Riccati equation \eqref{eq:unified-riccati-psi} becomes \eqref{eq:condition-conservative}. This equation has the constant solution $g \equiv 0$; if it is the only solution, then $\psi^\cI_t(T,0) = 0$ for all $0 \le t \le T$. Inserting into \eqref{eq:unified-riccati-phi}, also $\phi_t(T,0) = 0$. Together with \eqref{eq:kernel_conservative}, this shows that $p_{t,T}(x,D) = 1$, i.e. that $X$ is conservative.
\end{proof}

\begin{remark}The proof of Theorem~\ref{thm:existence} can easily be adapted to the case where $\gamma_0$ is not of the L\'evy--Khintchine form \eqref{eq:gamma_decomp1} at $t \in J^A$, but a general log-characteristic function of a $D$-valued random variable. This is due to the fact that $\gamma_0$ enters only into part \eqref{eq:unified-riccati-phi}, but not into part \eqref{eq:unified-riccati-psi} of the measure Riccati equation. 
\end{remark}

\section{Examples and applications}\label{sec:examples}

We begin this section with some examples which illustrate several aspects of stochastic discontinuities within affine semimartingales. 

After that, we study affine semimartingales in discrete time in Section \ref{sec:discretetime}. In Section \ref{ex:dividends} we glance at the application of affine semimartingales to stock prices with dividends and in Section \ref{sec:affine} we consider a new class of affine term structure models allowing for stochastic discontinuities.

\begin{example}\label{ex1}
Consider the following discrete-time variant of the (time-inhomogeneous) Poisson process: let $X_0=x \in \N$. Furthermore, assume that $X$ is constant except for $t\in\{1,2,\dots\}$ and assume that $\Delta X_n \in \{0,1\},\ n \in \{1,2,\dots\}$ are independent  with $P(\Delta X_n=1)=p_n \in (0,1)$. Then $X$ is an affine semimartingale because for $0 \le s \le t$,
$$ E[e^{u X_t}| \cF_s] = \exp\Big( u X_s + \sum_{s<n \le t, n \in \N} \phi_n(u) \Big) $$
where
$$ \phi_n(u)  = E[e^{u\Delta X_n}] = e^u ( p_n + e^{-u}(1-p_n)) = \exp( u + \log(p_n + e^{-u}(1-p_n))). $$
Clearly, it may happen that $\Delta X_n=0$ while $ \phi(u,n,t) - \phi(u,n\uminus,t)= \phi_n(u)\neq 0$. Stochastic discontinuity is reflected by having jumps at $t \in \{1,2,\dots\}$  with positive probability. The considered process falls in the class of point processes whose associated jump measure is an \emph{extended} Poisson measure, see II.1c in \cite{JacodShiryaev}. In contrast to Poisson processes, $X$ is not quasi-left continuous.
In summary, $X$ is a process with independent increments, but not a time-inhomogeneous L\'evy process.
 \hfill $\diamond$
\end{example}

The following example illustrates how one can construct stochastically discontinuous affine semimartingales from stochastically continuous ones, even from affine semimartingales without jumps, through a suitable (discontinuous) time-change.

\begin{example}
This example is inspired by \cite{GehmlichSchmidt2015MF}:
consider an affine semimartingale $X$ which is stochastically continuous (as treated in \cite{DuffieFilipovicSchachermayer} and \cite{Filipovic05}). We assume that $D$ denotes the state space of the affine semimartingale and that $\phi$ and $\psi$ are the characteristics of $X$ as in \eqref{cond:affine}.

Let $\set{t_1<\dots<t_N}\subset\R_{\ge 0}$ be some time points and $a_i\in\R^d$, $b_i\in\R^{d\times d}$ such that $a_i+b_i\cdot x\in D$ for all $x\in D$, $i=1,\dots,N$. Then
\begin{align}\label{ex:5.2.formula}
	\tilde{X}_t:=\sum_{i=1}^N \Ind_{\set{t\geq t_i}}\brac{a_i+b_i\cdot X_{t}},\quad t \ge 0
\end{align}
is an affine semimartingale in the sense of Definition \ref{def:affine}. Note that $\tilde X$ is in general not stochastically continuous, as it jumps with positive probability at the time points $t_i$, $i=1,\dots,N$. 

Indeed, by the affine property of $X$ and using iterated conditional expectations, we obtain for $t_{k} \le t < t_{k+1}$,
\begin{align}
 	E \Big[e^{\langle u, \tilde X_t  \rangle}|\cF_{t_{k}}\Big] &= 	
 		E \Big[\exp\Big(\langle u, \sum_{i=1}^k \brac{a_i+b_i\cdot X_{t}} \rangle\Big) | \cF_{t_{k}} \Big] \notag\\  
		&= e^{\sum_{i=1}^k \langle u, a_i \rangle} 
		   E \Big[\exp\Big(\langle  \sum_{i=1}^k u b_i^\top ,  X_{t}  \rangle\Big) | \cF_{t_{k}} \Big]  \notag\\
		&= \exp\Big( \sum_{i=1}^k \langle u, a_i \rangle  + \phi_{t_k}(t,u')+ \langle \psi_{t_k}(t,u'), X_{t_k} \rangle \Big), \label{temp250}
\end{align}
since $X$ is affine; here we set $u':= \sum_{i=1}^k u b_i^\top$.
The affine characteristics of $\tilde X$ are directly obtained from Equation \eqref{temp250}. 
 \hfill $\diamond$
\end{example}

The above example suggests that even more complex variants of the transformation considered in \eqref{ex:5.2.formula} stay in the affine class. The following example shows that this need not always be the case.

\begin{example}
Consider an affine process $X$ and let 
$$ Y_t = X_t + \ind{t \ge 1} X_1, \quad t \ge 0. $$
Then $Y$ is in general not affine because for $ 1 \le s < t$,
\begin{align*}
	E[e^{u Y_t}| \cF_s] = e^{u X_1} \cdot e^{\phi_s(t,u)+\psi_s(t,u) X_s} \neq e^{\tilde \phi_s(t,u) + \tilde \psi_s(t,u) X_s}
\end{align*}
as in general $\psi_s(t,u)\neq u$. However, $(X,Y)^\top$ is affine, a property prominently used in bond option pricing.
 \hfill $\diamond$
\end{example}

The following example illustrates the possibility of processes with affine Fourier transform, which are not semimartingales: 

\begin{example} 
\label{counterexample:not_a_semimartingale}
Consider a deterministic, one-dimensional process $X_t(\omega)=f(t),$ $t \ge 0$ with a function $f$ of infinite variation. For example one may choose one path of a Brownian motion - in this case
$f$ is even continuous. Then $X$ is affine in the sense that its Fourier transform has exponential affine form, as
$$ E[e^{u X_{t}}|\cF_{s}] = e^{ u f(t)}. $$
Hence $X$ satisfies Equation \ref{def:affine} with $\phi_s(t,u)=u f(t)$ and $\psi_s(t,u)=0$. Note, however, that $X$ is not a semimartingale and that $t \mapsto \phi_s(t,u)$ is of infinite variation and hence not quasi-regular (cf.~ Definition~\ref{def:quasi_regular}). In the case of processes with independent increments the gap to those processes which are also semimartingales can be completely classified, see Section II.4.c in \cite{JacodShiryaev}. A study of the gap between affine semimartingale studied here and processes satisfying \eqref{def:affine} but which are not semimartingales is beyond the scope of this article.
 \hfill $\diamond$
\end{example}

Other than affine transitions at the discontinuity points $t_1,\dots,t_N$ are also possible, as the following example illustrates.

\begin{example}
\label{ex:poisson+jump}
Let $N$ be a Poisson process with intensity $\lambda$. This is also an affine process with affine characteristics $\psi_s\brac{t,u}=u$ and $\phi_s\brac{t,u}=\lambda\brac{t-s}\brac{e^u-1}$. Let $\alpha$ a Bernoulli distributed random Variable with $\P\brac{\alpha=-1}=\frac{1}{2}$ and $\beta$ a standard normal random variable. Further let $\alpha,\beta$ and $N$ be mutually independent. Consider a (deterministic) time $\tau>0$ and the  process given by $$X_t=N_t+\ind{t\geq \tau}\brac{\alpha+\beta \sqrt{N_\tau}}, \quad t \ge 0$$
together with the (augmented) filtration generated by $\sigma( N_s, \alpha \ind{\tau \le s}, \beta \ind{\tau \le s}: s \le t)$.  We  compute the conditional characteristic function of $X$. At first let $s<\tau\leq t$;
\begin{eqnarray}
 E\brak{e^{ \langle u, X_t \rangle}\big\vert\cF_s}
&=& E\brak{ E\brak{ e^{ \langle u, N_t+\Ind_{\set{t\geq\tau}} (\alpha+\beta \sqrt{N_{\tau}} ) \rangle }\big\vert\cF_{\tau}}\Big\vert\cF_s}
\notag\\
&=&e^{\phi_\tau\brac{t,u}} E\brak{e^{u\alpha}}\cdot E\brak{e^{\psi_\tau\brac{t,u}N_{\tau}+u\beta \sqrt{N_{\tau}}}\big\vert\cF_s}
\notag \\
&=&e^{\phi_\tau\brac{t,u}}\frac{1}{2}\brac{e^u+e^{-u}} E\brak{e^{\brac{\psi_\tau\brac{t,u}+\frac{1}{2}u^2}N_\tau}\big\vert\cF_s}
\notag\\
&=&e^{\phi_\tau\brac{t,u}}\frac{1}{2}\brac{e^u+e^{-u}}e^{\psi_s\brac{\tau,\psi_\tau\brac{t,u}+\frac{1}{2}u^2}N_s},
\notag
\end{eqnarray}
In the second case where $\tau\leq s\leq t$, we have
\begin{eqnarray}
E\brak{e^{uX_t}\vert\mathcal{F}_s}&=&\exp\brac{\phi\brac{s,t,u}+\psi\brac{s,t,u}N_s+u \big(\alpha+\beta\sqrt{N_\tau}\big)}
\notag\\
&=&\exp\Brac{\phi\brac{s,t,u}+u X_s}.
\notag
\end{eqnarray}
Hence $X$ is an affine process with affine characteristics $\tilde{\phi}$ and $\tilde{\psi}$ given by
\begin{eqnarray}
\tilde{\phi}_s\brac{t,u}&=&\phi_s\brac{t,u}+\ind{s<\tau\leq t}\Brac{\log\brac{\cosh u}}
\notag\\
\tilde{\psi}_s\brac{t,u}&=&\psi_s\brac{\tau,\psi_\tau\brac{t,u}+\ind{s<\tau\leq t}\frac{1}{2}u^2}=u+\ind{s<\tau\leq t}\frac{1}{2}u^2.
\notag
\end{eqnarray}
Note that the process $X$ does not satisfy the support condition~\ref{def:full_support}, since it is supported on the positive real whole numbers for before the jump and might take negative values after $\tau$.
\hfill $\diamond$
\end{example}

\subsection{Affine processes in discrete time}\label{sec:discretetime}
In the considered semimartingale  approach, affine processes in discrete time can also be embedded into continuous time. This allows us to  obtain a full treatment of affine processes in discrete time as special case of our general results. Note that any discrete time process is of finite variation and hence a semimartingale such that as a matter of fact, Definition \ref{def:affine} covers all discrete-time affine processes in finite dimension. 

We use the time series notation for a process in discrete time and consider without loss of generality the time points $0,1,2,\dots$
Consider a complete probability space $(\Omega,\cF,P)$ and a filtration in discrete time $\hat \bbF=(\hat \cF_n)_{n \ge 0}$.
\begin{definition}
The time series $(\hat X_n)_{n \ge 0} $ is called \emph{affine} if it is $\hat \bbF$-adapted and there exist $\C$ and $\C^d$-valued c\`adl\`ag functions $\phi_n(m,u)$ and $\psi_n(m,u)$, respectively, such that 
\begin{align}\label{cond:affine-discrete}
E\big[ e^{\scal{u}{\hat X_m}} |\hat \cF_n \big] = \exp\big( \phi_n(m,u) + \langle \psi_n(m,u), \hat X_n \rangle\big)
\end{align}
holds for all $u \in i\R^d$ and $0 \le n \le m$, $n,m \in \N_0$. It is called \emph{time-homogeneous}, if $\phi_n(m,u) = \phi_0(n-m,u)\eqqcolon\phi_{m-n}(u)$ and $\psi_n(m,u)=\psi_0(m-n,u)\eqqcolon \psi_{m-n}(u)$, again for all  $u \in i\R^d$ and $0 \le s \le t$.
\end{definition}

To emphasize the filtration we are working with, we will sometimes call $\hat X$ $\hat \bbF$-affine. 
We associate to the time series $(\hat X_n)_{n \ge0}$  the piecewise-constant embedding into continuous time
\begin{align} \label{X-Xhat} 
	X_t = \hat X_{[t]}, \qquad t \ge 0 
\end{align}
with $[t]=n$ if $n \le t < n+1$. Then $\hat X$ is c\`adl\`ag, of finite variation and hence a semimartingale. In a similar way we let $\cF_t=\hat \cF_{[t]}$ and obtain the associated filtration in continuous time. Usual conditions are not needed here.%\todo{comment about u.c.}

Note that even if the affine time series is time-homogeneous, the associated continuous-time affine process $X$ will not be time-homogeneous in general: 
for $0<\epsilon<1$
\begin{align*}
E\big[ e^{\scal{u}{X_{m+\epsilon}}} |\cF_n \big] &= \exp\big( \phi_n(m+\epsilon,u) + \scal{\psi_n(m+\epsilon,u)}{X_n} \big) \\
&= \exp\big( \phi_n(m,u) + \scal{\psi_n(m,u)}{X_n} \big)
\end{align*}
which would give $\phi_{m+\epsilon-n}(u)=\phi_{m-n}(u)$ while on the other hand
\begin{align*}
E\big[ e^{\scal{u}{X_{m+\epsilon/2}}} |\cF_{n-\epsilon/2} \big] &= \exp\big( \phi_{n-\nicefrac\epsilon 2}(m+\nicefrac\epsilon 2,u) + \scal{\psi_{n-\nicefrac\epsilon 2}(m+\nicefrac\epsilon 2,u)}{X_{n-\epsilon/2}} \big) \\
&= \exp\big( \phi_{n-1}(m,u) + \scal{\psi_{n-1}(m,u)}{X_{n-1}} \big)
\end{align*}
which would give $\phi_{m-n}(u) = \phi_{m-(n-1)}(u)$ thus rendering $X$ to be constant. Time inhomogeneity in discrete time is therefore a strictly weaker concept than in continuous time. However, in the reverse direction we have a positive result.

\begin{remark} 
If $X$ is a homogeneous continuous-time $\bbF$-affine process, it follows immediately that the time-series $\hat X$ is $\hat \bbF$-affine and $\hat X$ is time-homogeneous.
\end{remark}

\begin{proposition}
Let  $(\hat X)$ be an affine time series satisyfing the support condition~\ref{def:full_support}. Then $\phi$ and $\psi$ satisfy the semiflow property
\begin{align}\label{eq:deltaphidiscrete}
\begin{aligned}
  \phi_n(m,u) & = \phi_n(n',\psi_{n'}(m,u)) + \phi_{n'}(m,u) \\
  \psi_n(m,u) & = \psi_n(n',\psi_{n'}(m,u))
\end{aligned}
\end{align}
for all $ 0 \le n <n'<\le m $,  $u \in i \R^d$.
\end{proposition}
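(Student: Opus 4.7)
The plan is to adapt the proof of Lemma~\ref{lem:flow_cont}(ii) to the discrete-time setting. The only ingredients needed are the tower property of conditional expectations, the affine property applied twice, and a uniqueness argument analogous to Lemma~\ref{lem:uniqueness}. Since the discrete-time indices do not change any of the underlying analysis, the argument will be essentially word-for-word the same.

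First I would fix $0 \le n \le n' \le m$ and $u \in i\R^d$ and apply the tower property,
\[
E\bigl[e^{\scal{u}{\hat X_m}}\,\big|\,\hat\cF_n\bigr] = E\bigl[E[e^{\scal{u}{\hat X_m}}\,|\,\hat\cF_{n'}]\,\big|\,\hat\cF_n\bigr].
\]
Inserting the affine representation \eqref{cond:affine-discrete} on both sides yields
\[
\exp\bigl(\phi_n(m,u) + \scal{\psi_n(m,u)}{\hat X_n}\bigr) = \exp\Bigl(\phi_{n'}(m,u) + \phi_n\bigl(n',\psi_{n'}(m,u)\bigr) + \scal{\psi_n\bigl(n',\psi_{n'}(m,u)\bigr)}{\hat X_n}\Bigr)
\]
almost surely, where in the second step I have applied the affine property once more (now with time indices $(n, n')$ and argument $\psi_{n'}(m,u) \in i\R^d$, the latter being ensured analogously to Lemma~\ref{lem:flow_cont}(i)).

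Next I would identify the two exponents. Writing $p(u)$ and $q(u)$ for the differences of the constant terms and the linear coefficients, respectively, the equality above says $p(u) + \scal{q(u)}{\hat X_n} \in 2\pi i \Z$ a.s. for every $u$. As in Lemma~\ref{lem:uniqueness}, continuity in $u$ (which on $i\R^d$ follows from conditional dominated convergence applied to the characteristic function, combined with taking logarithms in a neighbourhood of $u=0$ where $\phi_n(m,0)=0$ and $\psi_n(m,0)=0$) together with the simply connectedness of $i\R^d$ forces the value to be identically zero. Hence $p(u) + \scal{q(u)}{x} = 0$ for all $x \in \supp(\hat X_n)$ and $u \in i\R^d$. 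Taking convex combinations and using the full support condition~\ref{def:full_support}, the identity extends to all $x \in D$, and since $D$ has full linear span the two pieces $p(u)$ and $q(u)$ must vanish separately, delivering the semi-flow equations.

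The only non-routine point is the identification step: extracting the two equations \eqref{eq:deltaphidiscrete} from a single a.s. exponential identity. The continuous-time proof handled this via continuity in $u$ on $\cU$ and simply-connectedness, so the main task here would be to confirm the continuity ingredient in the discrete-time framework (which is immediate on the imaginary axis $i\R^d$ by dominated convergence) and then invoke the full support condition exactly as in the continuous-time case.
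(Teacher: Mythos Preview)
Your argument is correct and is essentially the discrete-time transcription of Lemma~\ref{lem:flow_cont}(ii): tower property, apply the affine identity twice, then invoke the uniqueness argument of Lemma~\ref{lem:uniqueness} together with the support condition. This is not, however, the route the paper takes. The paper's proof embeds $\hat X$ into continuous time, invokes the full characterization Theorem~\ref{prop1}, reads off the jump relations \eqref{eq:Deltaphipsi} with $\gamma_0(n,u)=-\phi_{n-1}(n,u)$ and $\bar\gamma(n,u)=u-\psi_{n-1}(n,u)$, obtains the one-step identities $\phi_n(m,u)=\phi_n(n+1,\psi_{n+1}(m,u))+\phi_{n+1}(m,u)$ and $\psi_n(m,u)=\psi_n(n+1,\psi_{n+1}(m,u))$, and then concludes the general semiflow property by induction on $n'-n$.

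Your approach is considerably more elementary: it avoids Theorem~\ref{prop1} (whose proof already uses the semi-flow property via Lemma~\ref{lem:flow_cont}) and needs no induction. The paper's route, on the other hand, has the by-product of exhibiting the discrete Riccati-type recursion explicitly, which is used in the Remark immediately following the proposition. One caveat on your side: you assert $\psi_{n'}(m,u)\in i\R^d$ ``analogously to Lemma~\ref{lem:flow_cont}(i)'', but that lemma only gives $\psi_{n'}(m,u)\in\cU$, which coincides with $i\R^d$ only when $D=\R^d$. Since the discrete-time definition \eqref{cond:affine-discrete} is stated for $u\in i\R^d$ rather than $u\in\cU$, the second application of the affine identity is not literally covered unless you first extend \eqref{cond:affine-discrete} to $\cU$ (which is straightforward via the embedding, and is in fact what the paper implicitly relies on when invoking Theorem~\ref{prop1}).
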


\begin{proof}
We apply Theorem \ref{prop1}. First, note that 
\begin{align*}
z_n (u)& = \int_D e^{\scal{u}{x}} \nu(\{n\},dx)= E\Big[ \ind{\Delta X_n \neq 0}e^{\scal u {\Delta X_n}}|\cF_{n-1}\Big].
\end{align*}
Hence, 
\begin{align*}
 E\big[ e^{\scal u {\Delta X_n}}|\cF_{n-1}\big] &=  z_n(u) + P(\Delta X_n = 0 | \cF_{n-1}) = 
 z_n(u)+1-z_n(0).
\end{align*}
This yields by definition that
\begin{align}
E\big[ e^{\scal u {\Delta X_n}}|\cF_{n-1}\big] &= E\big[ e^{\scal u { X_n}}|\cF_{n-1}\big]e^{-\scal u {X_{n-1}}} = e^{\phi_{n-1}(n,u) + \scal{\psi_{n-1}(n,u)-u}{X_{n-1}}} 
\end{align}
and from Equation \eqref{eq:Deltaphipsi} we recover that  $\gamma_0(n,u)=-\phi_{n-1}(n,u)$ and $\gamma_i(n,u)=-\psi_{n-1}(n,u)+u$.  
First, theorem \ref{prop1}  yields that
$$  \Delta \phi_{n+1}(m,u) = -\phi_n(n+1,\psi_n(m,u)), $$
i.e. 
\begin{align}\label{phi1}
 \phi_n(m,u)=\phi_n(n+1,\psi_{n+1}(m,u))+ \phi_{n+1}(m,u) 
 \end{align}
for $0 \le n <m$ and all $u \in i\R^d$. By induction we obtain that  $\phi$ satisfies the semiflow propertey
\begin{align*}
    \phi_n(m,u) = \phi_n(n',\psi_{n'}(m,u)) + \phi_{n'}(m,u) 
\end{align*}
for all $0 \le n < n' < m$ and $u \in i\R^d$. In similar spirit, Theorem \ref{prop1} yields that
\begin{align*}
  \Delta \psi_{n+1}(m,u) &= - \psi_n(n+1,\psi_{n+1}(m,u)) + \psi_{n+1}(m,u)
\end{align*}
which is equivalent to
\begin{align}\label{psi1}
 \psi_n(m,u) = \psi_n(n+1,\psi_{n+1}(m,u)) 
\end{align}
and hence the semiflow property
\begin{align*}
   \psi_n(m,u) = \psi_n(n',\psi_{n'}(m,u))
\end{align*}
for all $0 \le n < n' < m$ and $u \in i\R^d$ and the claim follows. 
\end{proof}

\begin{remark}
Despite the semiflow property one obtains directly from \eqref{phi1} and \eqref{psi1} that $\phi$ and $\psi$ are unique solutions of the following difference equations 
\begin{align*}
   \phi_n(n+1) &= F(n,u) \\
   \psi_n(n+1,u) -u &= R(n,u) \\
   \phi_n(m+1,u) &= F(n,u) + \phi_n(m,u+R(m,u)) \\
   \psi_n(m+1,u) &= \psi_n(m,u+R(m,u))
\end{align*}
where the functions $F$ and $R$ are defined by the first two equations. With the notation of Theorem \ref{prop1}, $F=-\gamma_0$ and $R_i=-\gamma_i$.
These equations and the above proposition are the content of Proposition 4.4 in \cite{RichterTeichmann2017}. The authors obtain the result directly from iterated conditional expectations.
\end{remark}

\begin{example}[AR(1)]
A (time-inhomogeneous) autoregressive time series of order (1) is given by
$$ \hat X_n = \alpha(n){\hat X_{n-1}} + \epsilon_n $$
where we assume that $(\epsilon_n)$ are independent  (not necessarily identically nor normally distributed). Then, $\hat X$ is affine, as
$$ E[e^{uX_n}|\hat \cF_{n-1}]=E[e^{u\epsilon_n}]e^{\alpha(n) X_{n-1}} $$
with $\hat \cF_{n-1}=\sigma(\hat X_0,\dots,X_{n-1})$.
The generalization to higher order requires an extension of the state space. So an AR(p) series gives an affine process $(\hat X_n,\dots,\hat X_{n-p})_{n \ge p}$.
 \hfill $\diamond$
\end{example}

\subsection{Asset prices with dividends}\label{ex:dividends}

Dividends and the relationship of a firm's asset prices have been discussed and analyzed since a long time, early contributions being for example \cite{MillerModigliani1961,MillerRock1985} or the approach proposed in  \cite{Lintner1956}, for which we propose a dynamic generalization. Most notably, typical continuous-time models incorporate dividends via a dividend yield. While this approach does ease mathematical modelling it certainly does not reflect empirical facts. In this section we show how a time-inhomogeneous affine process could be used to model stock price with dividends in an efficient way.

From a general viewpoint, the following example shows how to mix two different time scales (continuous-time and discrete-time) in a time-inhomogeneous affine model. Moreover, as the discrete-time scale has a certain lag, we also show how past-dependence can be incorporated in the same way (by extension of the state space, of course).

Consider a $d\ge 3$-dimensional affine process $X$. Let $D:=X^1$ denote the cumulated dividends process where we assume that dividends are paid at the time points $t=1,2,\dots$, i.e.\ $D$ is non-decreasing and constant on each interval $[n,n+1)$, $n \ge 1$. Let $X^2$ denote the stock price process, i.e.\ the jump of $X^2$ at dividend payment dates includes subtraction of the dividend payment, $\Delta X^2_n$,  plus possibly an additional jump due to new information, for example by the height of the dividend. We will follow the approach in \cite{Lintner1956}  and assume that the size of the dividend depends linearly on the current year's profit after taxes. In this regard, let $X^3$ denote the accumulated profits of the current year after taxes, i.e.\ $X^3_n=0$ and $X^3_{n-}$ denotes the accumulated profits of the $i$th year. 

In Lintner's model, see  \cite{Lintner1956}, the current dividend $D_n$ is given by
$$ D_n = a + b X^3_{n-} + c D_{n-} + \epsilon_n, $$
where $\epsilon_n$ are mean-zero stochastic error terms. According to Theorem \ref{prop1}, $X$ may be chosen affine only if 
the conditional distribution of the $\epsilon_n$ satisfies
$$ P(\epsilon_n \in dx | X_{n-}) = \kappa_{0,3}(dx) + \sum_{i=1}^d X_{n-}^i \kappa_{i,3}(dx) $$
where for $y \in \R^d$, $\kappa_{i,j}(dx)=\int_{\R^{d-1}}\kappa(dy_1,\dots,dy_{j-1},dx,dy_{j+1},dy_d)$.
Clearly this includes for example independent error terms (not necessarily normally distributed).
The remaining components of $X$ may be used for modelling stochastic volatility or s further covariates.

\subsection{Affine term-structure models} \label{sec:affine}

In this section we study a new class of term-structure models driven by affine processes. Motivated by our findings in Section \ref{sec:characterization}, where it turned out that the semimartingale characteristics of an affine process $X$ are dominated by an increasing, c\`adl\`ag function $A$, we study the following extension of the seminal Heath-Jarrow-Morton \cite{HJM} framework: consider a family of bond prices, given by
\begin{align}\label{eq:PtT}
	P(t,T) = \exp \Big( - \int_{(t,T]} f(t,u) dA_u \Big), \qquad 0 \le t \le T \le T^*,
\end{align}
with some final time horizon $T^*>0$. The rate $f(t,T)$ is called \emph{instantaneous forward rate} representing the interest rate contractable at time $t \le T$ for the infinitesimal future time interval $(T,T+dA_T]$, see \cite{Filipovic2009} for details and related literature.
The num\'eraire in this market is assumed to be of the from $\exp\big(\int_0^t r(s) dA_s\big)$. 

The term-structure model proposed here is specified by assuming the following structure of the forward rates:
\begin{align}\label{eq:ftT}
	f(t,T) = f(0,T) + \int_0^t a(s,T) dX_s, \quad 0 \le t \le T \le T^*,
\end{align}
where $a$ is a suitable, deterministic function. The first step will be the derivation of a condition on $a$ which renders discounted bond prices local martingales, thus leading to a bond market satisfying a suitable no-arbitrage property, like for example NAFL. 

Consider a filtered probability space $(\Omega,\cF,\bbF,\P)$ satisfying the usual conditions and consider for the beginning a $d$-dimensional, special semimartingale $X$ with semimartingale characteristics $(B,C,\nu)$. As we aim at considering an affine process $X$, with a view on Theorem \ref{prop1} we additionally assume that $X$ has the canonical representation 
\begin{align} X =X_0 + B_t + X^c + x*(\mu-\nu),\label{affine:Xcanonical}\end{align}
where $dB_t = b_t dA_t$, $dC_t = c_t dA_t$ and $\nu(dt,dx)=K_t(dx) d A_t$ and $A$ is deterministic, c\`adl\`ag, increasing with $A_0=0$. We define the left-continuous processes $A(.,T)$, $0< T \le T^*$, by
$$ A(t,T) := \int_{[t,T]} a(s,u) dA_u, \quad 0 \le t \le T, $$ and require the following technical assumption.
\begin{description}
\item[(A1)] Assume that $a:[0,T^*]^2\to \R^d$ is measurable and satisfies 
\begin{align*}
&\Big(\int_.^{T^*} |a_i(.,u)|^2 dA_u\Big)^{\nicefrac 1 2 }\in L(X^i), \quad i=1,\dots,d,\\
&\int_0^{T^*} \int_0^{T^*} |a(t,u)| |dB_t| dA_u < \infty, \quad 0 \le t \le T^*
\end{align*}
where $L(X^i)$ denotes the set of processes which are integrable in the semi-martingale integration sense with respect to the $i$-th coordinate $X^i$ of $X$, $i=1,\dots,d$.
\end{description}

\begin{proposition}\label{affine:prop}
Under {\bf (A1)}, discounted bond prices are local martingales if, and only if
\begin{enumerate}[(i)]
\item $r_t = f(t,t)$ $dA \otimes d\P$-almost surely for $0 \le t \le T^*$, and
\item the following condition holds:
\begin{align}\label{eq:AffineDC}
A(t,T) b_t & = \half A(t,T) c_t A(t,T)^\top  + \int_{\R^d}\Big( e^{A(t,T)x} -1 -A(t,T)x \Big)K_t(dx),
\end{align}
$dA \otimes d\P$-almost surely for $0 \le t \le T \le T^*$.
\end{enumerate}
\end{proposition}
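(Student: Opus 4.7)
The plan is to rewrite $\log\tilde P(t,T)$ as a semimartingale via stochastic Fubini, extract the drift of $\tilde P$ from It\^o's formula, and demand that it vanish.

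Substituting $f(t,u) = f(0,u) + \int_0^t a(s,u)\,dX_s$ into $-\int_{(t,T]} f(t,u)\,dA_u$ and exchanging the order of integration (justified by (A1)) yields
\begin{align*}
\log P(t,T) = \log P(0,T) + \int_{(0,t]} f(u,u)\,dA_u - \int_0^t A(s,T)\,dX_s,
\end{align*}
where the key intermediate identity $\int_0^t A(s,t)\,dX_s = \int_{(0,t]} \bigl(f(u,u)-f(0,u)\bigr)\,dA_u$ is itself a stochastic-Fubini argument combined with $f(u,u)-f(0,u) = \int_0^u a(s,u)\,dX_s$. Subtracting $\int_0^t r(u)\,dA_u$ and writing $Z_t := \log\tilde P(t,T)$ produces the semimartingale decomposition
\begin{align*}
dZ_t = \bigl(f(t,t)-r_t\bigr)\,dA_t - A(t,T)\,dX_t.
\end{align*}

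Next, inserting the canonical representation \eqref{affine:Xcanonical} and applying It\^o's formula to $\tilde P = e^Z$, the predictable drift of $d\tilde P/\tilde P_-$ splits into three contributions: (a) the $Z$-drift $\bigl[(f(t,t)-r_t) - A(t,T)b_t\bigr]\,dA_t$; (b) the quadratic-variation correction $\tfrac12 A(t,T)\,c_t\,A(t,T)^\top\,dA^c_t$ from $\langle Z^c\rangle$; (c) the predictable compensator of $\sum_{s\leq\cdot}(e^{\Delta Z_s}-1-\Delta Z_s)$, which under $\nu(dt,dx) = K_t(dx)\,dA_t$ takes a L\'evy--Khintchine form in $A(t,T)$ integrated against $K_t(dx)\,dA_t$. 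The process $\tilde P(\cdot,T)$ is a local martingale if and only if the sum of (a), (b), (c) vanishes $dA\otimes d\P$-a.s., giving one scalar identity per $T \in [0,T^*]$.

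To separate (i) and (ii) I would let $T\downarrow t$: under (A1) dominated convergence forces $A(t,T)\to 0$, so every $A(t,T)$-dependent term disappears and the identity collapses to $f(t,t)-r_t = 0$, which is (i); feeding (i) back into the drift identity leaves precisely (ii). Conversely, (i)$+$(ii) make the above drift zero by construction, and $\tilde P(\cdot,T)$ is a local martingale.

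The main delicate points I anticipate are (a) rigorously justifying the two stochastic-Fubini exchanges at the weak integrability level of (A1), and (b) handling atoms of $A$. At each $t\in J^A$, $X$ has a predictable jump with $\Delta B_t = \int x\,\nu(\{t\},dx)$, so the It\^o expansion couples the $A(t,T)b_t\,dA_t$ and jump-compensator contributions into a single discrete balance; verifying that these assemble into the unified $dA$-a.s. identity of (ii) uses that $C$ is always continuous (cf.\ \cite[Prop.~II.2.9]{JacodShiryaev}), so the $\tfrac12 A\,c\,A^\top$ term has no atoms and the remaining algebra closes.
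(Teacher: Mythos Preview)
Your proposal is correct and follows essentially the same route as the paper: a stochastic Fubini rearrangement of $\log P(t,T)$ into $\log P(0,T)+\int_0^t f(u,u)\,dA_u-\int_0^t A(s,T)\,dX_s$, followed by extracting the predictable drift of $e^Z$ and setting it to zero, then separating (i) and (ii) by taking $T=t$ (the paper) or $T\downarrow t$ (your version). The only cosmetic difference is that the paper passes through the stochastic-exponential representation $e^{I}=\cE(\tilde I)$ via \cite[Thm.~II.8.10]{JacodShiryaev}, whereas you apply It\^o's formula to $e^Z$ directly; the resulting jump correction $\sum(e^{\Delta Z}-1-\Delta Z)$ is identical in both. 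Your caveat (b) about atoms of $A$ is well placed---note that $A(t,T)\to a(t,t)\Delta A_t$ rather than $0$ as $T\downarrow t$ at jump points---but the paper's own proof is equally terse on this issue, simply writing ``first considering $T=t$''.
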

\begin{proof}
The proof follows the classical steps in \cite{HJM}, relying on a stochastic Fubini theorem. First note, that discounted bond prices take the form
\begin{align}
\tilde P(t,T) & = e^{-\int_{(t,T]} f(0,u) dA_u} \exp\bigg( - \int_{(t,T]} \int_{0}^t a(s,u) dX_s dA_u - \int_{(0,t]} r_s dA_s \bigg) \notag\\
&=:P(0,T) \exp(I(t,T)). 
\end{align}
The dynamics of $I$ can be obtained from the dynamics of the forward rates, as
\begin{align*}
	\int_{(t,T]} f(t,u) dA_u &= \int_{(t,T]} f(0,u) dA_u + \int_{(t,T]} \int_0^t a(s,u) dX_s dA_u \\
	&= \int_{(t,T]} f(0,u) dA_u + \int_0^t  \int_{(t,T]} a(s,u) dA_u dX_s  \\
	&= \int_{(t,T]} f(0,u) dA_u + \int_0^t  \int_{[s,T]} a(s,u) dA_u dX_s - \int_0^t  \int_{[s,t]} a(s,u) dA_u dX_s \\
	&= \int_{(t,T]} f(0,u) dA_u - \int_0^t \int_0^u a(s,u)  dX_sdA_u  + \int_0^t  A(s,T) dX_s \\
	&= \int_0^T f(0,u) dA_u - \int_0^t f(u,u) dA_u + \int_0^t A(s,T) dX_s;
\end{align*}
interchange of the integrals is justified under {\bf (A1)} by the Fubini theorem, for example along the lines of \cite{Veraar12,Protter}. The next step is to represent $\exp(I(.,T))=\cE(\tilde I(.,T)) $ as a stochastic exponential $\cE$ on the modified process $\tilde I$ relying on Theorem II.8.10 in \cite{JacodShiryaev}. This theorem yields that
\begin{align*}
 \tilde I(t,T) &= \tilde I(0,T) + I(t,T) + \half \langle I^c(.,T) \rangle_t + (e^x - 1 - x)* \mu^{I(.,T)},
\end{align*}
where $\mu^{I(.,T)}$ denotes the random measure associated to the jumps of $I$, see \eqref{def:muX}. Calculating the above terms under our assumptions together with representation \eqref{affine:Xcanonical} yields that 
\begin{align*}
	d \tilde I(t,T)&= \bigg( - A(t,T) b_t  + \half A(t,T) c_t A(t,T)^\top 
						+ \int_{\R^d}\Big(e^{-A(t,T) x} -1 + A(t,T ) x \Big)K(t,dx) \\
					&\phantom{= \bigg(} + (f(t,t)-r_t)          \bigg) dA_t  + dM_t, \qquad 0 \le t \le T
\end{align*}
with a local martingale $M$. The claim follows by first considering $T=t$, thus yielding (i) and thereafter (ii). For the reverse, observe that (i) and (ii) imply that $\tilde I(.,T)$ is a local martingale, and the claim follows.
\end{proof}

Recall the notion of a good parameter set of the affine semimartingale $X$ from Definition \ref{def:good-parameter}. The following corollary gives a specification of an affine term-structure model in the more classical case, i.e.~when $\gamma=0$.
\begin{corollary}\label{cor5.6}
If {\bf (A1)} holds and  $X$ is a quasi-regular affine semimartingale satisfying the support condition~\ref{def:full_support} and with parameter set $(A,0,\beta,\alpha,\mu)$, 
and if 
\begin{align}\label{eq:DCaffine}
A(t,T) \beta_{i,t} = \half A(t,T) \alpha_{i,t} A(t,T)^\top + \int_{\R^d}\Big( e^{A(t,T)x} -1 -A(t,T)x \Big)\mu_{i}(t,dx),
\end{align}
holds for $i=0,\dots,d$, then the drift condition \eqref{eq:AffineDC} holds.
\end{corollary}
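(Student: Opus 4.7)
The plan is to argue purely algebraically: unfold the semimartingale characteristics via Theorem~\ref{prop1} and then take a linear combination of the hypothesis \eqref{eq:DCaffine} with weights $(1, X^1_{t-}, \dots, X^d_{t-})$ to produce \eqref{eq:AffineDC}.

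First, I will observe that $\gamma = 0$ pins down the continuity in time of $\nu$ and $B$: equation \eqref{eq:char_nu_dis} forces $\int_D(e^{\langle u,\xi\rangle} - 1)\,\nu(\omega,\{t\},d\xi) = 0$ for every $u \in \cU$, whence $\nu(\omega,\{t\},D) = 0$ identically, and \eqref{eq:B_jumps} then yields $\Delta B \equiv 0$, i.e.\ $B = B^c$. Consequently the increasing process $A_t$ used to disintegrate $(B,C,\nu)$ in Section~\ref{sec:affine} may be taken equal to the continuous part $A^c$ from the good parameter set. Reading off \eqref{eq:char_B}--\eqref{eq:char_nu}, the disintegrated characteristics then satisfy, $dA_t \otimes d\P$-a.s.,
\[ b_t = \beta_0(t) + \sum_{i=1}^d X^i_{t-}\beta_i(t), \quad c_t = \alpha_0(t) + \sum_{i=1}^d X^i_{t-}\alpha_i(t), \quad K_t(dx) = \mu_0(t,dx) + \sum_{i=1}^d X^i_{t-}\mu_i(t,dx). \]

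Next, I will set $X^0_{t-} := 1$ and read \eqref{eq:DCaffine} as $d+1$ identities indexed by $i \in \{0,\dots,d\}$. Multiplying the $i$-th identity by $X^i_{t-}$ and summing over $i$, the left-hand side collapses to $A(t,T)\,b_t$. For the right-hand side, the map $y \mapsto \tfrac{1}{2}A(t,T)\,y\,A(t,T)^\top$ is linear in the symmetric matrix $y$ and the functional $M \mapsto \int(e^{A(t,T)x}-1-A(t,T)x)\,M(dx)$ is linear in the (signed) measure $M$, so the $\alpha$- and $\mu$-contributions combine into $\tfrac{1}{2}A(t,T)\,c_t\,A(t,T)^\top$ and $\int(e^{A(t,T)x}-1-A(t,T)x)\,K_t(dx)$, respectively. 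This is precisely \eqref{eq:AffineDC}.

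There is no real obstacle here; the calculation is essentially bookkeeping that exploits the linearity of both sides of \eqref{eq:DCaffine} in the per-component triplet $(\beta_i,\alpha_i,\mu_i)$. The only point worth highlighting is the identification of the disintegrator $A$ introduced in Section~\ref{sec:affine} with the continuous part $A^c$ appearing in Theorem~\ref{prop1}, and this is exactly what the assumption $\gamma = 0$ delivers.
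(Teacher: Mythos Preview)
Your proof is correct and follows essentially the same route as the paper: invoke Theorem~\ref{prop1} to express $(b_t,c_t,K_t)$ as affine combinations of $(\beta_i,\alpha_i,\mu_i)$ with weights $1,X^1_{t-},\dots,X^d_{t-}$, then take the corresponding linear combination of the $d+1$ identities~\eqref{eq:DCaffine}. The only difference is that you spell out explicitly why $\gamma=0$ forces $\nu(\{t\},D)=0$ and $B=B^c$, allowing the disintegrator in Section~\ref{sec:affine} to be identified with $A^c$; the paper simply writes ``The application of Theorem~\ref{prop1} yields \ldots'' and leaves this implicit.
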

\begin{proof}
The application of Theorem \ref{prop1} yields that $b=\beta_0 + \sum_{i=1}^d X^i_- \beta_i$, with similar expression for $a$ and $K$. Using linearity and \eqref{eq:DCaffine} we immediately obtain \eqref{eq:AffineDC}.
\end{proof}
A reverse version of this result is easily obtained requiring additionally linear independence of certain coefficients, see for example Section 9.3 in \cite{Filipovic2009}.

In the following, we study a variety of extensions of the Vasi\v cek model for incorporating jumps at predictable times. Of course, in a similar manner an extension of the Cox-Ingersoll-Ross model is possible, or one may even extend general stochastically continuous Markov processes in a similar way. 

\begin{example}[The Vasi\v cek model]\label{ex:Vasicek}
We begin by casting the famous Vasi\v cek model in the above framework. The Vas\v cek model is a one-factor Gaussian affine model, where the short rate is the strong solution of the  stochastic differential equation
\begin{align}
dr_t = (\alpha + \beta r_t) dt + \sigma dW_t
\end{align}
with a one-dimensional standard Brownian motion $W$ and $\beta \neq 0$, $\sigma >0$. The bond prices are given in exponential form, such that $P(t,T)=\exp( - \phi(t,T)-\psi(t,T) r_t )$ with $\phi$ and $\psi$ solving certain Riccati differential equation, see \cite{Filipovic2009}, Section 5.4.1, for details. If we embed this approach in our structure given in \eqref{eq:PtT}, we may chose $A_t=t$. The dynamics of $f(t,T)$ in this case will depend also on $R_t:=\int_0^t r_s ds$, such that we utilize the affine process
$$ X_t = (t, R_t, r_t)^\top, \qquad t \ge 0 $$
in \eqref{eq:ftT}. We obtain that $b_t = b_{t}^0 + b_{t}^1 X_t$ with $b_{t}^0=(1, 0, \alpha )^\top$ and $b_{t}^1=(0,1,\beta)$ as well as $c_t=c^0$ where the matrix $c^0$ has vanishing entries except for $c^0_{3,3}=\sigma^2$. The drift condition \eqref{eq:DCaffine} now directly implies that for $A(t,T)=(A^1(t,T),A^2(t,T),A^3(t,T))$
\begin{align}\label{temp1630}
\begin{aligned}
A^2(t,T) & = -\beta A^3(t,T) \\
A^1(t,T) & = (A^3(t,T))^2 \frac{\sigma^2}{2} - \alpha A^3(t,T). \end{aligned}
\end{align}

We have the freedom to choose on component of $A(t,T)$ which we do to match the volatility structure of the Vasi\v cek model, by setting the third component of $A(t,T)$ equal to 
$$ A^3 (t,T) = \beta^{-1}\Big( e^{\beta (T-t)}- 1 \Big). $$

In particular, this choice gives us
\begin{align*}
a^1(t,T) &= \frac{\sigma^2}{\beta} \Big(e^{\beta(T-t)}-1\Big) - \alpha e^{\beta (T-t)},\\
a^2(t,T) &= - \beta e^{\beta(T-t)}, \\
a^3(t,T) &=  e^{\beta(T-t)}.
\end{align*}
It is a straightforward exercise that this specification indeed coincides with the Vasi\v cek model given the explicit expressions for $\phi$ and $\psi$ in Section 5.4.1 in \cite{Filipovic2009}. In a similar manner, all affine term-structure models can be cast in the framework considered in this section.
 \hfill $\diamond$
\end{example}

\begin{example}[A simple Gaussian term structure model]\label{ex5.8}
A review of the above specification points towards the simpler Gaussian model where $X$ is the three-dimensional affine process as above, driven by the Vasi\v cek spot rate, but now we choose
$$ A^3(t,T) = (T-t), $$
such that the parameter $a^3(t,T)=1$ is constant. The drift condition now implies 
\begin{align*}
a^2 & = -\beta,  \\
A^1(t,T) & = (T-t)^2 \nicefrac{\sigma^2}{2} - \alpha (T-t),
\end{align*}
and we obtain a linear term $a^1(t,T) = \sigma^2(T-t) - \alpha$. This Gaussian model is considerably simpler than the Vasi\v cek model, and still has a mean-reversion property (as $X$ has the mean reversion property), but the volatility of the forward rate does not have the dampening factor $e^{\beta(T-t)}$ in the volatility.
 \hfill $\diamond$
\end{example}

Finally, we provide two examples of stochastic discontinuous specifications.

\begin{example}[Example \ref{ex5.8} with discontinuity]
Now we incorporate a stochastic discontinuity at $t=1$ in the above example and let 
$A(t)= t + \ind{t \ge 1}. $ The idea is to introduce a single jump at $t=1$ in the third component and compensate this by a predictable jump in the first coordinate. We begin by describing precisely the model: first, 
$$ dr_t = (\alpha + \beta r_t) dt + \sigma dW_t + dJ_t $$
where $J_t=\ind{t \ge 1} \xi$ with $\xi \sim \cN(0,\gamma^2)$, $\gamma>0$, being independent of $W$. Consider
$$ X_t = (A_t, R_t, r_t)^\top, \qquad t \ge 0, $$
with $R=\int_0^\cdot r_s ds$, as above.
This construction of $X$ implies that for $t \neq 1$, $b_t^0=(1,0,\alpha)^\top$ and $b_t^1=(0,1,\beta)^\top$ while for $t=1$, $b_1^0=(1,0,0)^\top$ and $b_1^1=0$. Moreover,  for $t \neq 1$, $c_t^0 = c_0$ as in the example above,  $c^1_t=0$ and, for $t=1$, we obtain $c_1=0$. 
The kernel $K$ vanishes except for $t=1$ and is  given by 
$K_{1}(dx)=\delta_1(dx^1)\phi(\nicefrac{x^3}{\gamma})dx^3$ where $\delta_1$ is the Dirac measure at point $1$ and $\phi$ is the standard normal density. It does not depend on $\omega$.

As in Example \ref{ex5.8} we specify $a^3=1$, such that $A^3(t,T)=(T-t) + \ind{1 \in [t,T]}$. For $t>1$ the process $A(t,T)$ is exactly as in the previous Example \ref{ex5.8}. 
For the remaining times we again use Corollary \ref{cor5.6}: on the one hand, for $i=1$, the drift condition \eqref{eq:DCaffine} implies that $A^2(t,T) = - \beta A^3(t,T)$ for all $0 \le t \le T$. On the other hand, for $i=0$, the drift condition can be \emph{separated}. Indeed, as $dA_t = dt + \delta_1(dt)$, we obtain, using $\Delta C\equiv 0$, that (for $t=1$)
\begin{align} \label{temp1670}
A(1,T) b_{0,1} &=\int_{\R^d}\Big( e^{-A(1,T)x} -1 +A(1,T)x \Big)K_{0,1}(dx) 
,  \end{align}
and, for $t \neq 1$,
\begin{align} \label{temp1673}
A(t,T) b_{0,t} = \half A(t,T) c_{0,t} A(t,T)^\top. \end{align}
Now Equation \eqref{temp1670} gives
\begin{align}
&&  A_1(1,T) & = e^{-A_1(1,T)+\nicefrac{(A_3(1,T) \gamma)^2}{2}} -1 + A_1(1,T)    \notag \\
\Leftrightarrow &&  A_1(1,T) & =\frac{(A_3(1,T) \gamma)^2}{2}, \label{temp1675} \end{align}
such that $A$ is specified for $t \in [1,T]$. Finally, for $0 \le t < 1$, Equation \eqref{temp1673} implies 
$$ A_1(t,T) = - \alpha A_3(t,T) + \frac{(A_3(t,T)\sigma)^2}{2} $$ 
and we conclude our example. 
 \hfill $\diamond$
\end{example}

\begin{example}[A discontinuous Vasi\v cek model] 
We extend the previous example to the Vasi\v cek model in a more general manner. Consider time points $t_1,\dots,t_n$ which correspond to stochastic discontinuities. Moreover, assume that 
$$ dr_t = (\alpha + \beta r_t) dt + \sigma dW_t + dJ_t $$
were 
$$ J_t = \sum_{i=1}^n \ind{t_i \le t} \xi_i, \quad t \ge 0,$$
with $\xi_i$ being i.i.d.~$\sim \cN(0,\gamma^2)$, being independent of $W$. Let $A_t = t + \sum_{i=1}^n \ind{t_i \le t}$ and consider as above $X = (A, R, r)$. Again, for $t \not \in \{t_1,\dots,t_n\}$, $b_{0,t}=(1,0,\alpha)^\top,$  $b_{1,t}=(0,1,\beta)^\top$, and $c_{0,t} = c_0$ while for $t=t_i$, $b_{0,t_i}=(1,0,0)^\top$, $b_{1,t_i}=0$ and $c_{t_i}=0$. Moreover,
$$ K_{t}(dx)=\ind{t \in \{t_1,\dots,t_n\}}\delta_1(dx^1)\phi(\nicefrac{x^3}{\gamma})dx^3. $$
We begin by specifying $a^3(t,T)=e^{\beta (T-t)}$ as in Example \ref{ex:Vasicek}, such that
$$ A^3(t,T) = \beta^{-1}\Big( e^{\beta (T-t)}- 1 \Big) + \sum_{i=1}^n \ind{t_i \in [t,T]}. $$
Again, we separate the drift condition in continuous and discontinuous part with the aid of Corollary \ref{cor5.6} yielding directly $A^2(t,T) = -\beta A^3(t,T)$ and $A^1(t,T)  = (A^3(t,T))^2 \frac{\sigma^2}{2} - \alpha A^3(t,T)$, for $t \in [0,T] \backslash \{t_1,\dots,t_n\}$, compare Equation \eqref{temp1630}. It remains to compute $A(t_i,T)$ for $t_i \le T$. In this regard, we obtain as in \eqref{temp1675} that
\begin{align} \label{temp1676}
A(t_i,T) & =\frac{(A_3(t_i,T) \gamma)^2}{2}, \quad i=1,\dots,n,
\end{align}
such that the discontinuous Vasi\v cek model is fully specified.
 \hfill $\diamond$
\end{example}

\begin {appendix}
\section{Measure differential equations}\label{General-measure-DE}
This section recalls and extends some notions and statements about measure differential equations (somtimes also referred to as Stieltjes differential equations) for the special cases needed in this article.

Let $A$ be an increasing function on $\R_{\geq0}$ with left limits and $F\colon \R_{\geq0} \times \mathcal{U}\rightarrow \mathcal{U}$, where the space $ \cU$ is  defined in Equation \eqref{eq:defU}. Assume $F(\cdot,g\brac{\cdot})$ is $A$-integrable on some interval $I\subset \R_{\geq 0}$ for all functions $g\colon \R_{\geq 0} \rightarrow \mathcal{U}$ of bounded variation. We consider the equation
\begin{equation}
\frac{dg(t)}{dA_t}=-F\brac{t,g(t)},\quad g(T)=u,\label{eq:measure-differential-equation}
\end{equation}
$dg/dA$ denotes the Radon-Nikodym derivative of the measure induced by $g$ with respect to the measure induced by $A$. We now recall the definition of a solution to a measure differential equation from \cite{SharmaDas1972} that we adopt in this article.

\begin{definition}Let $S$ be an open connected set in $\mathcal{U}$ and $T\in I$. A function $g\brac{\cdot}=g\brac{\cdot,T,u}$ will be called a solution of \eqref{eq:measure-differential-equation} through $\brac{T,u}$ on the interval $I$ if $g$ is right-continuous, of bounded variation, $g(T)=u$ and the distributional derivative of $g$ satisfies \eqref{eq:measure-differential-equation} on $\brac{\tau,T}$ for any $\tau<T$ in $I$.
\end{definition}

\begin{remark}
Assume $F\brac{t,g(t)}$ is integrable with respect to the Lebesgue-Stieltjes measure $dA$ for each function $g$ of bounded variation. Equivalently to the above definition $g$ is a solution of \eqref{eq:measure-differential-equation} through $\brac{T,u}$ on $I$ if and only if it satisfies the integral equation
\begin{equation}
g(t)=u+\int_{(t,T]}F\brac{s,g(s)}dA_s,
\label{eq:integral-measure-de}
\end{equation}
see \cite{SharmaDas1972} for more details. 
\end{remark}
We are now going to state and prove a modification of the existence and uniqueness result for measure differential equations in \cite{Sharma1972}. Define 
\begin{eqnarray*}
\Omega_b=\set{u\in \mathcal{U}\vert \norm{u}< b}
\end{eqnarray*}

\begin{theorem}\label{thm:ex-unique-de}
Suppose the following conditions hold
\begin{enumerate}[(i)]
\item there exists an $A$-integrable function $w$ such that 
\begin{equation}
\norm{F(t,u)}\leq w\brac{t}
\label{eq:F-integrability}
\end{equation}
uniformly in $u\in\Omega_b$;
\item $F$ satisfies a Lipschitz condition in $u$, i.e. there exists an $A$-integrable Lipschitz constant $L$ such that 
\begin{equation*}
\norm{F\brac{t,u_1}-F\brac{t,u_2}}\leq L(t)\norm{u_1-u_2}
\end{equation*}
for all $u\in \Omega_b$.
\end{enumerate}
Then there exists a unique solution $g$ of \eqref{eq:measure-differential-equation} on some interval $(T-a,T]$, $a>0$, satisfying the terminal condition $g(T)=u$.
\end{theorem}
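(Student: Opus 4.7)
My approach is the classical Picard iteration / Banach fixed-point argument applied to the integral form \eqref{eq:integral-measure-de}. The one nonstandard feature is the possible jump of $A$ at the terminal point $T$, which prevents the usual trick of making all integrals small by shrinking $a$. My plan is to peel off that terminal atom first and then run the contraction argument on the left-open, right-open interval.

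First, I would record the built-in jump at $T$. Any solution must satisfy $g(T)=u$ and, evaluating \eqref{eq:integral-measure-de} at $t$ approaching $T$ from the left,
\[
g(T-) \;=\; u + F(T,u)\,\Delta A_T \;=:\; \tilde u.
\]
Choose $r>0$ so small and shrink $b$ if necessary so that the closed ball $\overline{B_r(\tilde u)}$ is contained in $\Omega_b$; this uses only that $u\in\Omega_b$ and that $F(T,u)$ is a fixed vector with $\|F(T,u)\|\le w(T)$. Then rewrite the integral equation for $t<T$ as
\[
g(t) \;=\; \tilde u + \int_{(t,T)} F(s,g(s))\,dA_s,
\]
which is now an equation on $(T-a,T)$ against the measure $dA$ restricted to this open interval.

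Second, fix the function space. Let $X$ be the set of right-continuous functions $g:[T-a,T]\to\mathcal U$ with $g(T)=u$ and $\sup_{t\in[T-a,T)}\|g(t)-\tilde u\|\le r$, endowed with the sup norm; uniform limits of right-continuous functions are right-continuous, so $X$ is complete. Define $\Phi:X\to X$ by $(\Phi g)(T)=u$ and $(\Phi g)(t)=\tilde u + \int_{(t,T)} F(s,g(s))\,dA_s$ for $t<T$. Right-continuity and bounded variation of $\Phi g$ follow from right-continuity of $A$ and $A$-integrability of $F(\cdot,g(\cdot))$ on the relevant range, the latter guaranteed by \eqref{eq:F-integrability}.

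Third, verify invariance and contraction, and here is where the shift to the open interval pays off. By (i),
\[
\|(\Phi g)(t)-\tilde u\| \;\le\; \int_{(T-a,T)} w(s)\,dA_s,
\]
and by (ii),
\[
\|(\Phi g_1)(t)-(\Phi g_2)(t)\| \;\le\; \|g_1-g_2\|_\infty \int_{(T-a,T)} L(s)\,dA_s.
\]
Because $w$ and $L$ are $A$-integrable and the open intervals $(T-a,T)$ shrink to $\emptyset$ as $a\downarrow 0$, dominated convergence forces both integrals to $0$. Note this would fail on the half-closed $(T-a,T]$ when $\Delta A_T>0$, which is precisely why the atom at $T$ had to be absorbed into $\tilde u$. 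Pick $a>0$ small enough that the first integral is $\le r$ and the second is $\le 1/2$; then $\Phi$ maps $X$ into itself and is a strict contraction.

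Banach's fixed-point theorem then delivers a unique $g\in X$ with $\Phi g=g$, which by construction satisfies \eqref{eq:integral-measure-de} on $(T-a,T]$ and hence is the unique solution in the sense of the definition. The main obstacle, as foreshadowed above, is handling a possible atom of $A$ at the terminal point; the ideas for continuous $A$ are otherwise routine.
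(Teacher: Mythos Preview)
Your approach is essentially the same as the paper's: both peel off the terminal atom of $A$ at $T$ by explicitly computing $g(T-)=u+F(T,u)\Delta A_T$ and then run a Banach fixed-point argument on the integral equation over a short interval where $\int L\,dA<1$ and $\int w\,dA$ is small. The only differences are cosmetic---the paper works in the space of c\`adl\`ag functions of bounded variation whereas you use the sup norm on right-continuous functions---and your phrase ``shrink $b$ if necessary'' is a slip (shrinking $b$ makes $\Omega_b$ smaller, not larger), but the paper glosses over the same point by simply saying ``start from there instead'' without checking that $g(T-)\in\Omega_b$.
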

\begin{proof} First note that we have the following equation for the jumps of a solution $g$ to \eqref{eq:measure-differential-equation}, for all $t\in\set{t\in\R_+\vert \Delta A_t\neq 0}$,
\begin{equation}
\label{eq:jump-equation-DE}
\Delta g(t)=-F\brac{t,g(t)}\Delta A_t.
\end{equation}
With $\Delta g(t)=g(t)-g(t-)$ this is an explicit equation for the left limit of $g$, hence we can assume that $A$ has no jump at the terminal time $T$, as we can simply compute $g(T-)$ from the terminal value and start from there instead. Even with time-varying Lipschitz constant the proof of Theorem 1 in \cite{Sharma1972} is valid with small adjustments: $A$ is increasing and c\'adl\'ag. Therefore there exists $r\in\brak{0,T}$ such that

\begin{equation*}
\int_{(r,T]}L\brac{s}dA_s < 1
\end{equation*}
and 
\begin{equation}
\label{eq:boundedness-for-K}
k\coloneqq\norm{u}+\int _{(r,T]} w\brac{s} dA_s < b.
\end{equation}

Denote the space of c\'adl\'ag functions $f$ on $(r,T]$ with terminal value $f(T)=u$ and total variation $\Norm{f}\leq k$ by $\Lambda$ and consider the mapping

\begin{equation*}
%\label{eq:mapping}
K f(t)=u-\int_{(t,T]}F\brac{s,f(s)}dA_s,\quad t\in(r,T].
\end{equation*}

It follows from condition (i) and equation \eqref{eq:boundedness-for-K} that $K$ maps $\Lambda$ into itself. From the Lipschitz condition on $F$ we obtain

\begin{equation*}
\Norm{K f_1-K f_2}\leq \Norm{f_1-f_2}\int_{(r,T]}L(s)dA_s.
\end{equation*}

Hence, $K$ is a contraction on $\Lambda$ - a closed subspace of the space of c\`adl\`ag functions with bounded variation.This implies the existence of a unique fixed point of $K$, which is the desired local solution of \eqref{eq:F-integrability}.
\end{proof}

\begin{example}[The linear equation]\label{ex:linear-equation} Let $A$ as above and $L \in L_1(dA)$ with $L(t) \Delta A_t \ge -1$ for all $t \ge 0$.
Consider the linear measure equation
\begin{equation}
\frac{d}{dA_t}\phi(t)=-L(t) \phi(t)\quad \phi(T)=\phi_T
\label{eq:linear-abstract-de1}
\end{equation}
on $[0,T]$. The process $ \tilde{A}_t\coloneqq \int_{\brak{0,t}} L\brac{s} dA_s$ has finite variation and thus we can apply \cite[Theorem I.4.61]{JacodShiryaev} and especially equation I.4.63 to obtain that the unique, c\`adl\`ag solution to the linear equation \eqref{eq:linear-abstract-de1} is given by $\phi(t) = \phi_T \cE_t^T(L\,dA)$ where

\begin{equation}
\begin{split}
\cE_t^T(L\,dA) := & e^{\int_t^{T} L(s)dA_s} \prod_{s\in(t,T]}\brac{1+L(s)\Delta A_s}e^{-L(s)\Delta A_s} \notag
\\
=& e^{\int_t^{T} L(s)dA_s^c}\prod_{s\in(t,T]} \brac{1+L(s)\Delta A_s}.\label{eq:pseudo_exp}
\end{split}
\end{equation}
\end{example}

\begin{proposition}\label{prop:comparison}
Let $f,g$ be right-continuous and absolutely continuous w.r.t.~$A$. If the following conditions hold
\begin{enumerate}[(i)]
\item $f(T)\leq g(T)$,
\item $\frac{d}{dA_t}f(t)=-F\Brac{t,f(t)}$ and $\frac{d}{dA_t}g(t)=-G\Brac{t,g(t)}$ on $ I = [0,T]$, where $F,G$ are locally Lipschitz continuous in the second variable with A-integrable Lipschitz constants, and 
\item $F(t,u)\leq G(t,u)$ for all $t\in I$,
\end{enumerate}
then $f(t)\leq g(t)$ for all $t\in I$.
\end{proposition}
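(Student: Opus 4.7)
The plan is to set $h := f - g$ and show $h \leq 0$ by contradiction. First, rewriting the differential conditions in integral form (cf.~\eqref{eq:integral-measure-de}) gives
\[
h(t) = h(T) + \int_{(t,T]} \bigl[F(s,f(s)) - G(s,g(s))\bigr]\,dA_s, \qquad t \in I.
\]
Splitting the integrand as $[F(s,f) - F(s,g)] + [F(s,g) - G(s,g)]$, hypothesis (iii) kills the second bracket and local Lipschitz continuity of $F$ -- with $A$-integrable constant $L(s)$ valid on the bounded range of $f, g$ -- bounds the first by $L(s)|h(s)|$.

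Now assume for contradiction that $h(t_0) > 0$ for some $t_0 \in [0,T)$, and set $\tau := \inf\{t \in (t_0, T] : h(t) \leq 0\}$. Because $h$ is right-continuous and $h(T) \leq 0$, the set $\{h \leq 0\} \cap (t_0, T]$ is closed from the right and nonempty, so the infimum is attained: $h(\tau) \leq 0$, $\tau > t_0$, and $h > 0$ on $[t_0, \tau)$. In particular $h(\tau-) = \lim_{s\uparrow\tau} h(s) \geq 0$. Subtracting the integral equation evaluated at $t \in [t_0, \tau)$ from its left limit at $\tau$, and using $|h(s)| = h(s)$ on $(t_0, \tau)$, yields
\[
h(t) \leq h(\tau-) + \int_{(t,\tau)} L(s)\,h(s)\,dA_s.
\]
The measure-Gronwall inequality (\cite[Corollary~19.3.3]{Gil2007}) then gives $h(t_0) \leq h(\tau-) \cdot \cE_{t_0}^{\tau}(L\,dA)$, with the pseudo-exponential of Example~\ref{ex:linear-equation}. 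If $h(\tau-) = 0$ this already forces $h(t_0) \leq 0$, a contradiction.

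It remains to rule out $h(\tau-) > 0$, which requires $\tau$ to be an atom of $A$ at which $h$ jumps strictly down through zero. For this I invoke the jump relation $\Delta h(\tau) = -[F(\tau, f(\tau)) - G(\tau, g(\tau))]\,\Delta A_\tau$ together with the pointwise bound $F(\tau, f(\tau)) - G(\tau, g(\tau)) \leq L(\tau)\,|h(\tau)| = -L(\tau)\,h(\tau)$ at $\tau$ (where $h(\tau) \leq 0$), which gives
\[
h(\tau-) - h(\tau) \leq -L(\tau)\,h(\tau)\,\Delta A_\tau, \qquad \text{i.e.,} \qquad h(\tau-) \leq h(\tau)\bigl(1 - L(\tau)\,\Delta A_\tau\bigr).
\]
Under the standing condition $L(\tau)\,\Delta A_\tau < 1$ at each atom -- the same condition underlying the contraction argument in Theorem~\ref{thm:ex-unique-de} -- the right side is nonpositive, contradicting $h(\tau-) > 0$. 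The main obstacle, and the reason a naive Gronwall applied directly to the positive part $h^+$ does not close, is that without monotonicity of $F$ the Lipschitz bound $F(s,f) - F(s,g) \leq L(s)|h(s)|$ cannot be refined to $L(s)\,h^+(s)$ on the region $\{h \leq 0\}$; localising at the first crossing time $\tau$ and treating the jump by hand sidesteps this difficulty.
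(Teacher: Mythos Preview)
Your argument follows essentially the same route as the paper's: set $w=f-g$, argue by contradiction, localise to the first time the difference crosses zero, and handle the continuous and jump cases separately. The only cosmetic difference is that the paper uses an integrating factor $W(t)=w(t)\exp(-\int_t^{t_1}L_s\,dA_s)$ in the continuous case where you invoke Gronwall directly. Your explicit caveat that the jump step requires $L(\tau)\Delta A_\tau<1$ is well taken: the paper's own jump estimate $\Delta w(t_1)\geq -L_{t_1}w(t_1)\Delta A_{t_1}$ appears to carry a sign slip (the Lipschitz bound only gives $F(t_1,f)-G(t_1,g)\leq L_{t_1}|w(t_1)|=-L_{t_1}w(t_1)$, hence $\Delta w(t_1)\geq L_{t_1}w(t_1)\Delta A_{t_1}$), and once corrected it needs precisely the same smallness condition you flag to conclude $w(t_1-)\leq 0$.
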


\begin{proof}
Suppose the conclusion of the proposition does not hold. Let $w=f-g$. Then exists an interval $I' = [t_0,t_1)$ such that $w$ is positive and continuous on $I'$ and $w\brac{t_1}\leq 0$. Two cases can occur: $\Delta A_{t_1}=0$ or $\Delta A_{t_1}\neq 0$.

Consider first the case when there is no jump at $t_1$. From condition (ii) and (iii) we obtain on $(t_0,t_1]$ that 
\begin{align*} 
\frac{dw(t)}{dA_t} &= G(t,g(t)) -F(t,f(t)) \ge G(t,g(t)) - G(t,f(t)) \ge - L_t w(t),
\end{align*}
where $L_t$ is the Lipschitz constant of $G(t,.)$ on the relevant domain. Consider the function $W(t)=w(t) \exp \left(-\int^{t_1}_t L_s dA_s \right)$ on $(t_0,t_1]$. $W$ is absolutely continuous w.r.t. $A$ and continuous. Furthermore 
\begin{equation*}
\frac{dW(t)}{dA_t}=\brac{\frac{dw(t)}{dA_t}+L_t w(t)}e^{-\int_t^{t_1}L_s dA_s} \ge 0,\quad t\in(t_0,t_1].
\end{equation*}
Together with $w\brac{t_1} \le 0$ it follows that $w(t)\leq 0$ for all $t\in (t_0,t_1]$ contradicting the assumption.
Second, if we have a jump at $t_1$, i.e. $\Delta w\brac{t_1}\neq 0$, we immediately get $\Delta w\brac{t_1}<0$ and therefore
\begin{eqnarray*}
0>\Delta w\brac{t_1}&=&-\brac{F\brac{t_1,f(t_1)}-G\brac{t,g(t_1)}}\Delta A_{t_1}
\\
&\geq& -L_{t_1}w\brac{t_1}\Delta A_{t_1}.
\end{eqnarray*}
Hence, $w(t_1)>0$; a contradiction.
\end{proof}

\end{appendix}

\end{document}